% SIAM Article Template
\documentclass[review,onefignum,onetabnum]{siamonline190516}
\usepackage[utf8]{inputenc}
\usepackage[margin=1in]{geometry}

% Information that is shared between the article and the supplement
% (title and author information, macros, packages, etc.) goes into
% ex_shared.tex. If there is no supplement, this file can be included
% directly.

% SIAM Shared Information Template
% This is information that is shared between the main document and any
% supplement. If no supplement is required, then this information can
% be included directly in the main document.

% Packages and macros go here

% \usepackage{refcheck}
% %%% Infrastructure    
% \makeatletter
% \newcommand{\refcheckize}[1]{%
%   \expandafter\let\csname @@\string#1\endcsname#1%
%   \expandafter\DeclareRobustCommand\csname relax\string#1\endcsname[1]{%
%     \csname @@\string#1\endcsname{##1}\wrtusdrf{##1}}%
%   \expandafter\let\expandafter#1\csname relax\string#1\endcsname
% }
% \makeatother
% %%%
% %%% Now we add the reference commands we want refcheck to be aware of
% \refcheckize{\cref}
% \refcheckize{\Cref}

\usepackage{lipsum}
\usepackage{amsfonts}
\usepackage{graphicx}
\usepackage{epstopdf}
\usepackage{booktabs}       % professional-quality tables
\usepackage{algorithm}
\usepackage{algpseudocode}
\usepackage{algorithmicx}
\ifpdf
  \DeclareGraphicsExtensions{.eps,.pdf,.png,.jpg}
\else
  \DeclareGraphicsExtensions{.eps}
\fi

%\smartqed  % flush right qed marks, e.g. at end of proof
%
\usepackage{amsfonts}
\usepackage{graphicx}
\usepackage{amsmath}
\usepackage{amssymb}
\usepackage[shortlabels]{enumitem}
\usepackage{subfig}
\usepackage{epstopdf}
\usepackage{todonotes}
\usepackage{hyperref}
\usepackage{comment}
\usepackage[normalem]{ulem}
\usepackage{pifont}% http://ctan.org/pkg/pifont

\newcommand{\cmark}{\ding{51}}%
\newcommand{\xmark}{\ding{55}}%
\DeclareMathOperator*{\argmin}{argmin}
\newcommand{\bD}{\mathbf{D}}
\newcommand{\bL}{\mathbf{L}}
\newcommand{\bB}{\mathbf{B}}
\newcommand{\R}{\mathbb{R}}
\newcommand{\E}{\mathbb{E}}

\newcommand{\Pp}{\mathcal{P}}

\newcommand{\M}{\mathcal{M}}

\newcommand*\Laplace{\mathop{}\!\mathbin\bigtriangleup}
\newcommand\sbullet[1][.5]{\mathbin{\vcenter{\hbox{\scalebox{#1}{$\bullet$}}}}}
\newcommand{\blue}[1]{\textcolor{blue}{#1}}
\newcommand{\red}[1]{\textcolor{red}{#1}}
\newcommand{\magenta}[1]{\textcolor{magenta}{#1}}

% Add a serial/Oxford comma by default.

% Used for creating new theorem and remark environments
\newsiamremark{remark}{Remark}
\newsiamremark{hypothesis}{Hypothesis}
\crefname{hypothesis}{Hypothesis}{Hypotheses}
\newsiamthm{claim}{Claim}
\newsiamthm{proposition}{Proposition}

% Sets running headers as well as PDF title and authors
\headers{Efficient Natural Gradient Descent Methods}{L. Nurbekyan, W. Lei, and Y. Yang}

% Title. If the supplement option is on, then "Supplementary Material"
% is automatically inserted before the title.

% \title{Efficient Natural Gradient Descent Methods for Large-Scale Optimization Problems\thanks{Submitted to the editors on \today.
\title{Efficient Natural Gradient Descent Methods for Large-Scale PDE-Based Optimization Problems\thanks{Submitted to the editors. %on \today.
\funding{L.~Nurbekyan was partially supported by AFOSR MURI FA 9550 18-1-0502 grant. Y.~Yang was partially supported by NSF grant DMS-1913129.}}}

% Authors: full names plus addresses.
\author{
Levon Nurbekyan\thanks{Department of Mathematics, UCLA (\email{lnurbek@math.ucla.edu}).}
\and Wanzhou Lei\thanks{Harvard University (\email{wanzhoulei@g.harvard.edu}).}
\and Yunan Yang\thanks{Institute for Theoretical Studies, ETH Z\"urich (\email{yunan.yang@eth-its.ethz.ch}).}
}

\usepackage{amsopn}

%%% Local Variables: 
%%% mode:latex
%%% TeX-master: "ex_article"
%%% End: 

% Optional PDF information
\ifpdf
\hypersetup{
  pdftitle={Efficient Natural Gradient Descent Methods},
  pdfauthor={L. Nurbekyan, W. Lei, and Y. Yang}
}
\fi

% The next statement enables references to information in the
% supplement. See the xr-hyperref package for details.

% \externaldocument{ex_supplement}

% FundRef data to be entered by SIAM
%<funding-group specific-use="FundRef">
%<award-group>
%<funding-source>
%<named-content content-type="funder-name"> 
%</named-content> 
%<named-content content-type="funder-identifier"> 
%</named-content>
%</funding-source>
%<award-id> </award-id>
%</award-group>
%</funding-group>

\begin{document}

\maketitle

% REQUIRED

\begin{comment}
\begin{abstract}
We propose an efficient numerical method for computing natural gradient descent directions with respect to a generic metric in the state space. Our technique relies on representing the natural gradient direction as a solution to a standard least-squares problem. Hence, instead of calculating, storing, or inverting the information matrix directly, we apply efficient methods from numerical linear algebra to solve this least-squares problem. We treat both scenarios where the derivative of the state variable with respect to the parameter is either explicitly known or implicitly given through constraints. We apply the QR decomposition to solve the least-squares problem in the former case and utilize the adjoint-state method to compute the natural gradient descent direction in the latter case. As a result, we can reliably compute several natural gradient descents, including the Wasserstein natural gradient, for a large-scale parameter space with thousands of dimensions, which was believed to be out of reach. Finally, our numerical results shed light on the qualitative differences among the standard gradient descent method and various natural gradient descent methods based on different metric spaces in large-scale nonconvex optimization problems.
\end{abstract}    
\end{comment}

\begin{abstract}
We propose efficient numerical schemes for implementing the natural gradient descent (NGD) for a broad range of metric spaces with applications to PDE-based optimization problems. Our technique represents the natural gradient direction as a solution to a standard least-squares problem. Hence, instead of calculating, storing, or inverting the information matrix directly, we apply efficient methods from numerical linear algebra. We treat both scenarios where the Jacobian, i.e., the derivative of the state variable with respect to the parameter, is either explicitly known or implicitly given through constraints. We can thus reliably compute several natural NGDs for a large-scale parameter space. In particular, we are able to compute Wasserstein NGD in thousands of dimensions, which was believed to be out of reach. Finally, our numerical results shed light on the qualitative differences between the standard gradient descent and various NGD methods based on different metric spaces in nonconvex optimization problems.
\end{abstract}

% REQUIRED
\begin{keywords}
natural gradient, constrained optimization, least-squares method, gradient flow, inverse problem  % Hutchinson's estimator
\end{keywords}

% REQUIRED
\begin{AMS}
65K10, 49M15, 49M41, 90C26, 49Q22
\end{AMS}
% 65K10:	Numerical optimization and variational techniques
% 49M15: Newton-type methods
% 49M41: PDE constrained optimization (numerical aspects)
% 90C26: Nonconvex programming, global optimization
% 49Q22: optimal transport

\section{Introduction}\label{sec:intro}

In this paper, we are interested in solving optimization problems of the form
\begin{equation}\label{eq:main}
% \begin{split}
    \inf_{\theta} f(\rho(\theta)),
    %\mbox{s.t.}~& g(\rho(\theta),\theta)=0,
% \end{split}
\end{equation}
where $f$ is the objective/loss function and $\rho(\theta)$ is the state variable parameterized by $\theta$. We mainly consider $\rho(\theta)$ as a PDE-based forward model, and $f$ is a suitable discrepancy measure between the output of the forward model and the data. Inverse problems, such as the full waveform inversion (FWI), are classical examples of~\eqref{eq:main}. More recent examples are machine learning-based PDE solvers where $\rho(\theta)$ is a neural network with weights $\theta$ that approximates the solution to the PDE~\cite{raissi2019physics}. They are typical large-scale optimization problems either due to fine grids parameterization of the unknown parameter or large networks employed to approximate the solutions.

First-order methods, especially in neural network training, are workhorses of high-dimensional optimization tasks. One such approach is the gradient descent (GD) method, whose continuous analog is the following gradient flow equation
\[
    \dot{\theta}=-\partial_\theta f(\rho(\theta)).
\]
Although reasonably effective and computationally efficient, GD might suffer from local minima trapping, slow convergence, and sensitivity to hyperparameters. Consequently, first-order methods and some of their (stochastic and deterministic) variants are not robust and require a significant hyperparameter tuning on a problem-by-problem basis~\cite{yao2021adahessian}. Such performance is often explained by the lack of curvature information in the parameter updates. Many optimization algorithms have been developed to improve the convergence speed, such as Newton-type methods~\cite{xu2020second}, quasi-Newton methods~\cite{nocedal2006numerical}, and various acceleration techniques~\cite{nesterov1983method} including momentum-based methods~\cite{qian1999momentum}.

Recently, there has been a revival of second-order methods in the machine-learning community~\cite{xu2020second}. Significant developments include the AdaHessian~\cite{yao2021adahessian} and NGD~\cite{amari1985differential,martens2020new}. Both techniques incorporate curvature information into the parameter update. AdaHessian preconditions the gradient with an adaptive diagonal approximation to the Hessian~\cite{yao2021adahessian}. The diagonal approximation is estimated by an adaption of Hutchinson's trace estimator~\cite{hutchinson1990}. Consequently, one obtains an optimization method for~\cref{eq:main} with a similar observed convergence rate to Newton's method with a computational cost comparable to first-order methods. AdaHessian shows state-of-the-art performance across a range of machine learning tasks and is observed to be more robust and less sensitive to hyperparameter choices compared to several stochastic first-order methods~\cite{yao2021adahessian}.

A different approach is the natural gradient descent 
(NGD) method~\cite{amari1985differential,amari1998natural,pascanu2014revisiting,li2018natural,li2019wasserstein,mallasto2019formalization,martens2020new,shen2020sinkhorn}, which preconditions the gradient with the \textit{information matrix} instead of the Hessian; see~\eqref{eq:NGD_flow}. 
NGD performs the steepest descent with respect to the $\rho$-space, the \textit{``natural''} manifold where $\rho(\theta)$ resides, instead of the parameter $\theta$-space~\cite{amari1985differential,amari1998natural}. 
A Riemannian structure is imposed on the parameterized subset $\{\rho(\theta)\}$ and then pulled back into the $\theta$-space. 
NGD is sometimes also regarded as a generalized Gauss--Newton method~\cite{schraudolph2002fast,pascanu2014revisiting,martens2020new}, which has a faster convergence rate than GD. In particular, NGD can be interpreted as an approximate Netwon's method when the manifold metric and the objective function $f$ are compatible~\cite{martens2020new}. Other properties of NGD include local invariance with respect to the re-parameterization, robustness with respect to hyperparameter choices, ability to progress with large step-sizes, and enforcing a state-dependent positive semi-definite preconditioning matrix. Inspired by the success of NGD in machine learning, we aim to extend and apply it to PDE-based optimization problems, which are mostly formulated in proper functional spaces with rich flexibility in choosing the metric.

% \magenta{Inspired by the success of second-order methods in machine-learning we aim at extending and applying these techniques to PDE-based optimization problems. Here we focus on NGD and leave the AdaHessian and its possible combinations with NGD as an intriguing future work. We believe that this cross fertilization between PDE and machine learning is very promising and beneficial for both communities.}

Mathematically, continuous-time NGD is the preconditioned gradient flow
\begin{equation}\label{eq:NGD_flow}
    \dot{\theta}=-G(\theta)^{-1} \partial_\theta f(\rho(\theta)),
\end{equation}
where $G(\theta)$ is the pull-back of a (formal) Riemannian metric in the $\rho$-space. It is often referred to as an information matrix and will be discussed in detail in~\Cref{sec:math_nat}. There are two options to discretize~\eqref{eq:NGD_flow}: explicit and implicit. An explicit Euler discretization of~\eqref{eq:NGD_flow} is
\begin{equation}\label{eq:NGD_explicit}
    \theta^{l+1}=\theta^l-\tau^l G(\theta^l)^{-1} \partial_\theta f(\rho(\theta^l)),\quad l=0,1,\ldots,
\end{equation}
where $\tau^l>0$ is the step size or learning rate. An implicit Euler discretization of~\eqref{eq:NGD_flow} gives rise to
\begin{equation}\label{eq:NGD_implicit}
    \theta^{l+1}=\argmin_{\theta}  \bigg\{ f(\rho(\theta)) + \frac{\langle G(\theta^l)(\theta-\theta^l),(\theta-\theta^l) \rangle}{2\tau^l} \bigg\},
\end{equation}
where $\langle\cdot,\cdot\rangle$ is the Euclidean inner product. If we denote by $d_\rho$ the divergence or distance generating $G(\theta)$, the second term in~\eqref{eq:NGD_implicit} is the leading-order Taylor expansion of $\frac{1}{2\tau} d_{\rho} (\rho(\theta), \rho(\theta^{l}) )^2$ at $\theta^l$. Thus, the solution of~\eqref{eq:NGD_implicit} agrees with
\begin{equation} \label{eq:nat_proximal}
    \theta^{l+1} = \argmin_{\theta}   \bigg\{ f(\rho(\theta)) + \frac{d_{\rho} (\rho(\theta), \rho(\theta^{l}) )^2}{2\tau^l} \bigg\},
\end{equation}
up to the first order.
Note that~\eqref{eq:nat_proximal} captures the underlying idea of the NGD: takeing advantage of the geometric structure to find a direction with a maximum descent in the $\rho$-space. In contrast, finding a maximum descent in the  $\theta$-space as done by the ``standard'' implicit GD is
\begin{equation} \label{eq:std_proximal}
    \theta^{l+1} = \argmin_{\theta}   \bigg\{ f(\rho(\theta)) + \frac{d_\theta(\theta, \theta^{l})^2}{2\tau^l} \bigg\},
\end{equation}
where $d_\theta$ is the chosen metric for the $\theta$-space. In this work, we focus on different $d_\rho$ and consider $d_\theta$ as the Euclidean distance for simplicity.
%where $\theta^{l}$ is the $l$-th iteration, $\tau$ is the step size, and $d_\rho$ and $d_\theta$ are metrics for the $\rho$-space and $\theta$-space, respectively.
Intuitively, one may interpret it as a shift from the parametric $\theta$-space to the more ``natural'' $\rho$-space. Thus, the infinitesimal decrease in the value of $f$ and the direction of motion for $\rho$ on $\M$ at $\rho=\rho(\theta)$ are invariant under re-parameterizations~\cite{martens2020new}.

NGD has been proven to be advantageous in various problems in machine learning and statistical inference, such as blind source separation~\cite{amari1998adaptive}, reinforcement learning~\cite{peters2008natural} and neural network training~\cite{schraudolph2002fast,martens2012training,pascanu2014revisiting,martens2015optimizing,li2019affine,martens2020new,shen2020sinkhorn,lin2021wasserstein}. Further applications include solution methods for high-dimensional Fokker--Planck equations~\cite{li2019parametric,liu2022neural}. Despite its success in statistical inferences and machine learning, the NGD method is far from being a mainstream computational technique, especially in PDE-based applications. A major obstacle is its computational complexity. In~\eqref{eq:NGD_explicit}, explicit discretization of NGD reduces to preconditioning the standard gradient by the inverse of an often dense information matrix. The numerical computation is often intractable.

Existing works in the literature focused on explicit formulae~\cite{yang1998complexity}, fast matrix-vector products~\cite{schraudolph2002fast,martens2012training,pascanu2014revisiting,martens2020new}, and factorization techniques~\cite{martens2015optimizing} for natural gradients generated by the Fisher--Rao metric in the $\rho$-space where $\rho$ is the output of feed-forward neural networks. These methods exploit the structural compatibility of standard loss functions and the Fisher metric by interpreting the Fisher NGD as a generalized Gauss--Newton or Hessian-free optimization~\cite[Sec.~9.2]{martens2020new}. The computational aspects of feed-forward neural networks are also utilized since computations through the forward and backward passes are recycled. Thus, to the best of our knowledge, the neural-network community focuses on the Hessian approximation aspect in the context of feed-forward neural network models rather than the geometric properties of the forward-model-space. For the Wasserstein NGD (WNGD), \cite{li2019affine,arbel2019kernelized} rely on implicit Euler discretization, but their methods still suffer from accuracy issues due to the high dimensionality of the parameter space~\cite[Sec.~2]{shen2020sinkhorn}. A regularized WNGD was considered in~\cite{shen2020sinkhorn}. Unfortunately, by design, the method blows up when the regularization parameter decreases to zero, so it cannot compute the original WNGD. In~\cite{ying2021natural}, compactly supported wavelets were used to diagonalize the information matrix, which is limited to the periodic setting with strictly positive $\rho(\theta)$ and also certain smoothness assumptions for $\rho(\theta)$.

There are three main contributions in our work. First,  we depart from the Hessian approximation framework and adopt a more general \textit{geometric formalism} of the NGD. Our approach applies to a general metric for the state space, which can be independent of the choice of the objective function. As examples, we treat Euclidean, Wasserstein, Sobolev, and Fisher--Rao natural gradients in a single framework for an arbitrary loss function. We focus on the standard least-squares formulation of the NGD direction.  Second, we streamline the general NGD computation and develop two approaches to whether the forward model $\theta\mapsto \rho(\theta)$ is explicit or implicit. When the Jacobian $\partial_\theta \rho$ is analytically available, we utilize the (column-pivoting) QR decomposition for which a low-rank approximation can be directly applied if necessary~\cite{heavner2019building}. When $\partial_\theta \rho$ is only implicitly available through the optimization constraints, we employ iterative solution procedures such as the conjugate gradient method~\cite{metivier2013full} and utilize the adjoint-state method~\cite{plessix2006review}. This second approach shares the same flavor with the method of fast matrix-vector product for the Fisher--Rao NGD for neural network training~\cite{schraudolph2002fast,martens2012training,pascanu2014revisiting,martens2020new}, but it allows one to apply the general NGD to large-scale optimization problems (see~\Cref{subsec:FWI} for example). In particular, our method can perform high-dimensional Wasserstein NGD, which was believed to be out of reach in the literature~\cite[Sec.~1]{shen2020sinkhorn}. Last but not least, we use a few representative examples to demonstrate that the choice of metric in NGD matters as it can not only quantitatively affect the convergence rate but also qualitatively determine which basin of attraction the iterates converge to. %In~\Cref{subsec:GaussianMixture}, we show that some choices lead to the global minimum while others converge to local minima.

The rest of the paper is organized as follows. In~\Cref{sec:math_nat}, we first present the general mathematical formulations of the natural gradient based on a given metric space $(\M, g)$ and how it contrasts with the standard gradient. We then discuss a few common natural gradient examples and how they can all be reduced to a standard $L^2$-based minimization problem on the continuous level. In~\Cref{sec:num_nat}, we demonstrate our general computational approaches under a unified framework that applies to any NGD method. The strategies concentrate on two scenarios regarding whether the Jacobian $\partial_\theta \rho$  is explicitly given or not, followed by~\Cref{sec:numerics} where we apply the proposed numerical strategies for NGD methods to optimization problems under these two scenarios. Conclusions and further discussions follow in~\Cref{sec:conclusions}.

%where the objective function $f:\R^d \to \R$ and the constraint function $g:\R^d \times \R^p \to \R^d$ are smooth. Alternatively, one can treat~\eqref{eq:main} as
%\begin{equation*}
%    \inf_{\rho \in \mathcal{M}} f(\rho)
%\end{equation*}
%where $\mathcal{M}= \rho (\R^p) \subset \mathcal{M}_0=\R^d$ is the manifold where $\rho(\theta)$ belongs to. We highlight that $\mathcal{M}$ is the parametric manifold. This perspective will be instrumental to understand the notion of natural gradient.

%Intuitively speaking, a gradient for a particular function is a direction where the objective function or energy increases as fast as possible. The notion of ``as fast as possible'' requires quantitatively specification, typically done by choosing a particular local geometry, or equivalently, a unique inner product based on which the Riesz' Lemma gives its specific gradient. In this context, it is crucial to determine a geometry on the ``ambient'' manifold $\mathcal{M}_0$. In other words, this determines how we measure distances between $\rho_1,\rho_2 \in \mathcal{M}_0$. In the rest of the section, we first start with the Euclidean distance as the metric for the manifold $\mathcal{M}_0$. We will see that all we discuss is going on locally in the tangent spaces, and therefore the restriction to the Euclidean geometry is not restrictive -- any abstract inner product will have its well-defined gradient and natural gradient direction. The latter perspective will be useful when we later introduce the Wasserstein geometry.

% \red{Jacobian vs. Tangent Vectors?}

\section{Mathematical formulations of NGD}\label{sec:math_nat}

We begin by discussing the NGD method in an abstract setting before focusing on the common examples. 

Assume that $\rho$ is in a Riemannian manifold $(\M,g)$, and $\theta$ is in an open set $\Theta \subset \R^p$. Furthermore, assume that the correspondence $\theta \in \Theta \mapsto \rho(\theta) \in \M$ is smooth so that there exist tangent vectors
\begin{equation}\label{eq:tangent_vec_gen}
    %\{\zeta_1,\zeta_2,\cdots,\zeta_p\}=
    \Big \{\partial^g_{\theta_1} \rho(\theta),\partial^g_{\theta_2} \rho(\theta),\cdots,\partial^g_{\theta_p} \rho(\theta) \Big\} \subset T_{\rho} \M.
\end{equation}
The superscript $g$ in $\partial^g$ highlights the dependence of tangent vectors on the choice of the Riemannian structure $(\M,g)$. Furthermore, assume that $f:\M \mapsto \R$ is a smooth function and denote by $\partial^g_\rho f \in T_\rho \M$ its metric gradient; that is, for all smooth curves $t \mapsto \rho(t)$, we have
\[  \frac{d f(\rho(t))}{dt}=\big\langle \partial^g_\rho f(\rho(t)), \partial^g_t \rho(t) \big \rangle_{g(\rho(t))}. \]

Tangent vectors $\{\partial^g_{\theta_i} \rho\}_{i=1}^p$ incorporate fundamental information on how $\rho(\theta)$ traverses $\M$ when $\theta$ traverses $\Theta$. Indeed, an infinitesimal motion of $\theta$ along the coordinate $\theta_i$-axis in $\Theta$ induces an infinitesimal motion of $\rho$ along $\partial^g_{\theta_i} \rho$ in $\M$. More generally, if 
\[
\frac{d\theta}{dt} = \dot{\theta}= \eta = (\eta_1,\ldots, \eta_p)^\top,
\]
then
\[
   \partial^g_t \rho(\theta) =  \eta_1 \partial^g_{\theta_1} \rho+\cdots +\eta_p \partial^g_{\theta_p} \rho. 
\]
Consequently, we have that
\[
    \frac{d f(\rho (\theta)) }{dt}=\big\langle \partial^g_\rho f, \partial^g_t \rho(\theta) \big\rangle_{g(\rho(\theta))}= \big \langle \partial^g_\rho f,  \sum_{i=1}^p \eta_i \partial^g_{\theta_i} \rho \big\rangle_{g(\rho(\theta))}.
\]
Intuitively, to achieve the largest descent in the loss $f(\rho(\theta))$, we want to choose $\eta = (\eta_1,\ldots, \eta_p)^\top$ such that  $\partial^g_\rho f$ is as \textit{negatively} correlated with $\sum_{i=1}^p \eta_i \partial^g_{\theta_i} \rho$ as possible in terms of the given metric $g$. Thus, the NGD direction corresponds to the best approximation of $-\partial_\rho^g f$ by $\{\partial^g_{\theta_i} \rho \}$ in $T_\rho \M$:
\begin{equation}\label{eq:nat_grad_gen}
    \eta^{nat}=\argmin_\eta \bigg \|\partial_\rho^g f+\sum_{i=1}^p \eta_i \partial^g_{\theta_i} \rho\bigg \|^2_{g(\rho(\theta))}.
\end{equation}
In other words, the NGD corresponds to the evolution of $\theta$ that attempts to follow the \textit{manifold GD of $f$ on $\M$} as closely as possible. Since $\left(T_{\rho}\M,g\right)$ is an inner-product space where $g$ may depend on $\rho$, and $\rho$ depends on $\theta$, \eqref{eq:nat_grad_gen} implies that under the natural gradient flow, the direction of motion for $\rho$ on $\M$ is given by the $g(\rho(\theta))$-orthogonal projection of $-\partial^g_\rho f$ onto $\operatorname{span} \{\partial^g_{\theta_1} \rho,\ldots, \partial^g_{\theta_p} \rho \}$:
\begin{equation}\label{eq:nat_grad_dir_M_gen}
\partial_t^g \rho = \sum_{i=1}^p \eta^{nat}_i \ \partial^g_{\theta_i} \rho=:P \partial^g_\rho f.
\end{equation}
Since $\operatorname{span} \{\partial^g_{\theta_1} \rho,\ldots, \partial^g_{\theta_p} \rho \}$ is invariant under smooth changes of coordinates $\theta=\theta(\psi)$, we obtain that~\eqref{eq:nat_grad_dir_M_gen} is also invariant under such transformations. Additionally, the infinitesimal decay of the loss function is also invariant under smooth changes in the coordinates. Indeed,
\[
    \frac{d f(\rho(\theta))}{dt}=-\|P \partial^g_\rho f\|^2_{g(\rho(\theta))}.
\]
A critical benefit of these invariance properties is mitigating potential negative effects of a poor choice of parameterization by filtering them out (since the corresponding decrease in the loss function is parameter-invariant) and reaching $\argmin_{\rho \in \M} f(\rho)$ as quickly and as closely as possible. For the analysis of NGD based on this insight, we refer to \cite{martens2020new,liu2022neural} for more details.

\begin{remark}
When $\{\partial^g_{\theta_i} \rho \}$ are linearly dependent, the $\eta^{nat}$ in \eqref{eq:nat_grad_gen} is not unique, and we pick the one with the minimal length for computational purposes; that is, we replace $G^{-1}(\theta)$ by the Moore--Penrose pseudoinverse $G(\theta)^\dagger$ in~\eqref{eq:NGD_flow} and elsewhere. It is worth noting that this choice is crucial to guarantee convergence and generalization properties of the NGD method in some applications; see~\cite{zhang2019fast} for example. Alternatively, one may consider a damping variant of $G$; see~\Cref{subsec:damped}.
\end{remark}

To compare the natural gradient with the standard gradient $\partial_\theta f(\rho(\theta))$, first note that
\[ \frac{d f(\rho(\theta))}{dt}=\big \langle \partial^g_\rho f,  \sum_{i=1}^p \eta_i \partial^g_{\theta_i} \rho \big \rangle_{g(\rho(\theta))}=\sum_{i=1}^p \big \langle \partial^g_\rho f, \partial^g_{\theta_i} \rho \big \rangle_{g(\rho(\theta))}  \eta_i = \partial_{\theta} f(\rho(\theta)) \cdot  \eta.\]
Therefore, in a similar form with~\eqref{eq:nat_grad_gen}, the GD direction is the solution to 
\[
    \eta^{std}=\argmin_{\eta} \|\partial_
    \theta f(\rho(\theta))+\eta\|^2.
\]
In other words,  GD is the steepest descent in the $\theta$-space, whereas NGD is an approximation of the steepest descent in the $\rho$-space based on a given metric $g$. Furthermore, GD leads to
\begin{equation*}
\begin{split}
    \partial_t^g \rho =& \sum_{i=1}^p \eta^{std}_i \  \partial^g_{\theta_i} \rho = -\sum_{i=1}^p \big \langle \partial^g_\rho f, \partial^g_{\theta_i} \rho \big \rangle_{g(\rho(\theta))} \partial^g_{\theta_i} \rho,\\
    \frac{d f(\rho(\theta))}{dt}=&-\|\partial_\theta f(\rho(\theta))\|_2^2=-\sum_{i=1}^p \left| \big \langle \partial^g_\rho f, \partial^g_{\theta_i} \rho \big \rangle_{g(\rho(\theta))}\right|^2,
\end{split}
\end{equation*}
which are not necessarily invariant under coordinate transformations.

When $\{\partial^g_{\theta_i} \rho\}$ are linearly independent, we obtain that
\begin{equation}\label{eq:nat_std_relation}
    \eta^{nat}= - G(\theta)^{-1} \partial_\theta f(\rho(\theta))= G(\theta)^{-1} \eta^{std},
\end{equation}
where $G(\theta)$ is the \textit{information matrix} whose $(i,j)$-th entry is
\begin{equation}\label{eq:nat_kernel_matrix}
    G_{ij}(\theta)=  \big\langle \partial^g_{\theta_i} \rho, \partial^g_{\theta_j} \rho \big\rangle_{g(\rho(\theta))},\quad i,j=1,\ldots,p.
\end{equation}
Thus, an NGD direction is a GD direction preconditioned by the inverse of the information matrix. 

Since the information matrix $G(\theta)$ is often dense and can be ill-conditioned, direct application of~\eqref{eq:nat_std_relation} is prohibitively costly for high-dimensional parameter space; that is, large $p$. Our goal is to calculate $\eta^{nat}$ via the least-squares formulation~\eqref{eq:nat_grad_gen}, circumventing the computational costs from assembling and inverting the dense matrix $G$ directly.

\subsection{$L^2$ natural gradient}\label{subsec:L2nat_math}

In this subsection, we embed $\rho$ in the metric space $(\M,g)=\left( L^2(\R^d), \langle \cdot, \cdot \rangle_{L^2(\R^d)}\right)$. In this case, the tangent space $T_\rho \M = L^2(\R^d)$ for any $\rho \in \M$, and
\[
 \big \langle \zeta , \hat{\zeta} \big \rangle_{g(\rho)}= \int_{\R^d} \zeta(x) \hat{\zeta}(x) dx,\quad \forall \zeta, \hat{\zeta} \in T_\rho \M.
\]

The linear structure of $L^2(\R^d)$ is advantageous for developing differential calculus, and many finite-dimensional concepts generalize naturally. Indeed, the tangent vectors \eqref{eq:tangent_vec_gen} for a smooth mapping $\theta \in \Theta \mapsto \rho(\theta,\cdot) \in L^2(\R^d)$
are $\{\zeta_1,\zeta_2,\cdots,\zeta_p\}$ given by
\begin{equation}\label{eq:zeta's}
    \zeta_i(x)=\partial_{\theta_i} \rho(\theta,x),\quad i = 1,\ldots,p.
\end{equation}
The information matrix in \eqref{eq:nat_kernel_matrix} is given by
\[
    G^{L^2}_{ij}(\theta)=\int_{\R^d} \partial_{\theta_i}\rho(\theta,x)\partial_{\theta_j}\rho(\theta,x) dx,\quad i,j=1,2,\cdots,p.
\]
Next, for $f:L^2(\R^d) \mapsto \R$, we obtain that the $L^2$-derivative at $\rho$ is $\partial_\rho f(\rho) \in  L^2(\R^d)$ such that
\begin{equation}\label{eq:flat_grad}
    \lim \limits_{t\to 0}\frac{f(\rho+t \zeta)-f(\rho)}{t}=\int_{\R^d} \partial_\rho f(\rho)(x) \ \zeta(x) dx,\quad \forall \zeta \in L^2(\R^d).
\end{equation}
Thus, $\partial_\rho f$ is the commonly known derivative in the sense of calculus of variations. Finally, for smooth $\rho:\Theta \to L^2(\R^d)$ and $f:L^2(\R^d) \to \R$, formula \eqref{eq:nat_grad_gen} leads to the $L^2$ natural gradient
\begin{equation}\label{eq:nat_grad_L2}
    \eta^{nat}_{L^2}=\argmin_{\eta \in \R^p} \bigg \|\partial_\rho f+\sum_{i=1}^p \eta_i \zeta_i \bigg\|^2_{L^2(\R^d)}.
\end{equation}

The $L^2$ metric is not a typical choice for the NGD. Nevertheless, this metric is important as a basis for computing more complex NGDs. Additionally, see Section \ref{sec:L2_GN} for the connection between $L^2$-based NGD and the Gauss--Newton method.

\begin{comment}
Thus, a  is differentiable if for every $\theta \in \Theta$, there exists $\{\zeta_i(\theta)\} \subset L^2(\R^d)$ such that
\begin{equation}
    \lim \limits_{t\to 0}\frac{\left\|\rho(\theta+ t \eta )-\rho(\theta)-t\sum_{i=1}^p \eta_i \zeta_i(\theta)\right\|_{L^2(\R^d)}}{t}=0,\quad \forall \eta \in \R^p.
\end{equation}
Under suitable and mild conditions, $\{\zeta_i\}$ are simply pointwise partial derivatives of densities; that is, 
\begin{equation*}
    \{\partial_{\theta_1} \rho, \partial_{\theta_2} \rho,\cdots, \partial_{\theta_p} \rho\}
\end{equation*}
\end{comment}

\begin{comment}
\begin{remark}
Some literature restricts $\M$ to functions that integrate to $1$ in which case $T_\rho \M$ reduces to functions integrating to $0$. We choose $\M$ to be the full space so that the metric derivative coincides with the derivative in the sense of calculus of variations. Otherwise, we would have
\begin{equation*}
    \partial^g_\rho f = \partial_\rho f(\rho) - \int_{\R^d} \partial_\rho f(\rho)(x) dx,
\end{equation*}
which might cause confusion. This choice, however, is purely stylistic and does not affect the mathematical properties of the $L^2$ natural gradient.
\end{remark}
\end{comment}

\subsection{$H^s$ natural gradient}\label{subsec:Hsnat_math}

In this subsection, we assume that $\rho$ is embedded in the $L^2$-based Sobolev space $H^s(\R^d)$ for $s\in \mathbb{Z}$ (we return to the $L^2$ case if $s=0$). The metric space $(\M,g)= \left(H^s(\R^d), \langle \cdot, \cdot \rangle_{H^s(\R^d)} \right)$. Since this is also a Hilbert space, $T_\rho \M= H^s(\R^d)$ for all $\rho \in \M$, and
\begin{equation*}
    \langle \zeta, \hat{\zeta}\rangle_{g(\rho)}=  \langle \zeta, \hat{\zeta}\rangle_{H^s(\R^d)} =  \begin{cases}
    \int_{\R^d} \bD^s \zeta \cdot \bD^s \hat{\zeta}~dx, & s\geq 0,\\
    \int_{\R^d} \bD^{-s} \chi \cdot \bD^{-s} \hat{\chi}~dx, & s< 0,
    \end{cases}
    \quad \zeta,\hat{\zeta} \in T_\rho \M, 
\end{equation*}
where $\bD^s$ is the linear operator whose output is the vector of all the partial derivatives up to order $s$ for $s\geq0$. For $s<0$, we define $\chi = ((\bD^{-s})^*\bD^{-s})^{-1}  \zeta$ and $\hat{\chi} = ((\bD^{-s})^*\bD^{-s})^{-1}  \hat{\zeta}$. For example, $(\bD^{-s})^*\bD^{-s} = I-\Laplace$ if $s=-1$ and $I -\Laplace + \Laplace^2$ if $s=-2$~\cite{yang2022anderson}. Note that $\bD^{-s} ((\bD^{-s})^*\bD^{-s})^{-1}=((\bD^{-s})^*)^\dagger$ for $s<0$, where $^\dagger$ is the notation for pseudoinverse. Thus, we can rewrite 
\[ \langle  \zeta, \hat{\zeta} \rangle_{H^s(\R^d)}=\langle \bD^{-s}\chi,\bD^{-s}\hat{\chi} \rangle_{L^2(\R^d)}=\big \langle ((\bD^{-s})^*)^\dagger \zeta, ((\bD^{-s})^*)^\dagger\hat{\zeta} \big\rangle_{L^2(\Omega)},\quad \forall \zeta,\hat{\zeta}\in T_\rho \M. \]

For a smooth $\rho:\Theta \to H^s(\R^d)$, the tangent vectors are still $\{\zeta_i\}$ in \eqref{eq:zeta's} but now are considered as elements of $H^s(\R^d)$. This means that the information matrix $G^{H^s}(\theta)$ defined in~\eqref{eq:nat_kernel_matrix} is given by
\begin{equation*}
    G^{H^s}_{ij}(\theta)=\langle\partial_{\theta_i} \rho, \partial_{\theta_j} \rho \rangle_{H^s(\R^d)}=\begin{cases}
    \int_{\R^d} \bD^s \partial_{\theta_i} \rho(\theta,x) \cdot \bD^s \partial_{\theta_j} \rho(\theta,x)~dx, & s\geq 0,\\
    \int_{\R^d} ((\bD^{-s})^*)^\dagger \partial_{\theta_i} \rho(\theta,x) \cdot ((\bD^{-s})^*)^\dagger \partial_{\theta_j} \rho(\theta,x)~dx, & s< 0,
    \end{cases}
\end{equation*}
for $i,j=1,\ldots,p$. Note that $G^{H^s}$ is different from $G^{L^2}$ due to the inner product.

Next, we calculate the $H^s$ gradient of smooth $f:H^s(\R^d) \to \R$. For $s\geq 0$, we have that
\[
    \lim\limits_{t \to 0} \frac{f(\rho+t \zeta)-f(\rho)}{t}= \langle \partial^{H^s}_{\rho} f, \zeta \rangle_{H^s(\R^d)}  = \int_{\R^d} \bD^s \partial^{H^s}_{\rho} f \cdot \bD^s \zeta\  dx=\int_{\R^d} (\bD^s)^*\bD^s\ \partial^{H^s}_{\rho} f ~\zeta\ dx,
\]
and so from \eqref{eq:flat_grad} we obtain
\[
    \partial^{H^s}_{\rho} f =  \left((\bD^s)^* \bD^s \right)^{-1} \partial_\rho f,\quad s\geq 0.
\]
When $s<0$, under analogous assumptions with the case $s\geq 0$, we have that
\begin{equation*}
\begin{split}
    \lim\limits_{t \to 0} \frac{f(\rho+t \zeta)-f(\rho)}{t}=& \langle \partial^{H^s}_{\rho} f, \zeta \rangle_{H^s(\R^d)} = \int_{\R^d} ((\bD^{-s})^*)^\dagger \partial^{H^s}_{\rho} f\cdot ((\bD^{-s})^*)^\dagger \zeta  dx\\
    =&\int_{\R^d} (\bD^{-s})^\dagger ((\bD^{-s})^*)^\dagger \partial^{H^s}_{\rho} f\cdot  \zeta  dx=\int_{\R^d} ((\bD^{-s})^* \bD^{-s})^\dagger \partial^{H^s}_{\rho} f  \ \zeta dx.
\end{split}
\end{equation*}
Thus, from \eqref{eq:flat_grad}, we have 
\[
    \partial^{H^s}_{\rho} f = (\bD^{-s})^*\bD^{-s}   \partial_\rho f ,\quad s<0.
\]

\begin{comment}
Extending the definition of $\mathfrak{L}^s$ for $s\geq 0$ by
\begin{equation*}
\mathfrak{L}^s=\begin{cases} (\bD^s)^* \bD^s, & s \geq 0,\\
\left((\bD^{-s})^*\bD^{-s}\right)^{-1},&  s<0,
\end{cases}
\end{equation*}
we can combine~\eqref{eq:gradf_Hs} and~\eqref{eq:gradf_Hms} in a unified formulation for all $s\in\mathbb{Z}$ as
\begin{equation}\label{eq:gradf_Hs_all}
    \partial^{H^s}_{\rho} f = (\mathfrak{L}^s)^{-1} \partial_\rho f, \quad \forall s\in\mathbb{Z}.
\end{equation}

We may denote $\left( \mathfrak{L}^s \right)^{-1} =  (\bD^{-s})^*\bD^{-s} : = \left( (\bD^{s})^*\bD^{s}\right)^{-1} $ for $s<0$. Combining~\eqref{eq:gradf_Hs} and~\eqref{eq:gradf_Hms}, we obtain a unified formulation for all $s\in\mathbb{Z}$ as
\begin{equation}\label{eq:gradf_Hs_all}
    \partial^{H^s}_{\rho} f = \left((\bD^s)^* \bD^s \right)^{-1} \partial_\rho f, \quad \forall s\in\mathbb{Z}.
\end{equation}
\end{comment}

Finally, for smooth $\rho:\Theta \to H^s(\R^d)$ and $f:H^s(\R^d) \to \R$, \eqref{eq:nat_grad_gen} leads to the $H^s$ natural gradient
\begin{equation}\label{eq:nat_grad_Hs}
    \eta^{nat}_{H^s}=\argmin_{\eta \in \R^p} \bigg \|\partial_\rho^{H^s} f+\sum_{i=1}^p \eta_i \zeta_i \bigg\|^2_{H^s(\R^d)}.
\end{equation}
For numerical implementation, we reduce this previous formulation into a least-squares problem in $L^2(\R^d)$. More specifically, for $s\geq 0$, \eqref{eq:nat_grad_Hs} can be written as
\[
    \eta^{nat}_{H^s}=\argmin_{\eta \in \R^p} \bigg \|\bD^s ((\bD^s)^*\bD^s)^{-1}\partial_\rho f+\sum_{i=1}^p \eta_i~ \bD^s \zeta_i \bigg\|^2_{L^2(\R^d)}.
\]
Furthermore, for $s<0$ we have that \eqref{eq:nat_grad_Hs} can be written as
\[
    \eta^{nat}_{H^s}=\argmin_{\eta \in \R^p} \bigg \|\bD^{-s} \partial_\rho f+\sum_{i=1}^p \eta_i~ ((\bD^{-s})^*)^\dagger \zeta_i \bigg\|^2_{L^2(\R^d)}.
\]
Both cases share the same form~\eqref{eq:nat_grad_Hs_L2} with $\bL=\bD^s$ for $s\geq 0$ and $\bL=((\bD^{-s})^*)^\dagger$ for $s<0$:
\begin{equation} \label{eq:nat_grad_Hs_L2}
      \eta^{nat}_{H^s}  =\argmin_{\eta \in \R^p} \bigg \|(\bL^*)^\dagger \partial_\rho f+\sum_{i=1}^p \eta_i~ \bL \zeta_i \bigg\|^2_{L^2(\R^d)}.
\end{equation}

\subsection{$\dot{H}^s$ natural gradient}\label{subsec:Hs_semi_nat_math}
Next, we consider the NGD with respect to the Sobolev semi-norm $\dot{H}^s$. For simplicity, we assume that $\rho$ is supported in a smooth bounded domain $\Omega\subset \R^d$. For $s>0$, we define the space $\dot{H}^s(\Omega)=\left\{\zeta \in H^s(\Omega):\int_\Omega \zeta=0 \right\}$ with the inner product
\[
\langle \zeta, \hat{\zeta} \rangle_{\dot{H}^s(\Omega)}=\langle \widetilde{\bD}^s \zeta, \widetilde{\bD}^s \hat{\zeta} \rangle_{L^2(\Omega)}=\int_{\Omega} \widetilde{\bD}^s \zeta \cdot \widetilde{\bD}^s \hat{\zeta} dx,\quad \forall \zeta,\hat{\zeta} \in \dot{H}^s(\Omega),
\]
where $\widetilde{\bD}^s$ is the linear operator whose output is the vector of all partial derivatives of positive order up to $s$. To consider the $\dot{H}^s$ natural gradient flows, we embed $\rho$ in $(\M,g)$, where
\[
    \M= \left\{ \rho \in H^s(\Omega):\int_\Omega \rho=1 \right\},\quad T_\rho \M = \dot{H}^s(\Omega),\quad 
    \langle \zeta,\hat{\zeta}\rangle_{g(\rho)}= \langle \zeta,\hat{\zeta}\rangle_{\dot{H}^s(\Omega)}, \quad \forall \zeta,\hat{\zeta} \in T_\rho \M.%= \int_{\Omega}\widetilde{\bD}^s \zeta \cdot \widetilde{\bD}^s \hat{\zeta} dx
\]
For a smooth $\rho:\Theta \to \M$, we still have that the tangent vectors are $\{\zeta_i\}$ as defined in \eqref{eq:zeta's}. Since $\int_{\Omega} \rho(\theta,x)dx=1$ for all $\theta \in \Theta$, we have that
\[
    \int_{\Omega} \zeta_i(x) dx = \int_{\Omega} \partial_{\theta_i}\rho(\theta,x) dx=\partial_{\theta_i} \int_{\Omega} \rho(\theta,x) dx=0, \quad i = 1,\ldots,p, 
\]
and thus $\{\zeta_i\} \subset T_{\rho}\M$. The information matrix~\eqref{eq:nat_kernel_matrix} for this case is $G^{\dot{H}^s}(\theta)$ given by
\[
    G^{\dot{H}^s}_{ij}(\theta)=\langle\partial_{\theta_i} \rho, \partial_{\theta_j} \rho \rangle_{\dot{H}^s(\Omega)}= \int_{\Omega}\widetilde{\bD}^s \partial_{\theta_i} \rho(\theta,x) \cdot \widetilde{\bD}^s \partial_{\theta_j} \rho(\theta,x) dx,\quad i,j=1,\ldots,p.
\]
On the other hand, for $f:\M \to \R$, we have that $\partial^{\dot{H}^s}_{\rho} f \in \dot{H}^s(\Omega)$ where $\forall \zeta \in T_\rho\M$,
\[
    \lim\limits_{t \to 0} \frac{f(\rho+t \zeta)-f(\rho)}{t}=\langle \partial^{\dot{H}^s}_{\rho} f,\zeta\rangle_{\dot{H}^s(\Omega)} = \int_{\Omega} \widetilde{\bD}^s \partial^{\dot{H}^s}_{\rho} f \cdot \widetilde{\bD}^s \zeta dx=\int_{\Omega}  (\widetilde{\bD}^s )^*\widetilde{\bD}^s \partial^{\dot{H}^s}_{\rho} f ~\zeta dx.
\]
The adjoint $(\widetilde{\bD}^s)^*$ is taken with respect to the $L^2(\Omega)$ inner product. Hence, based on~\eqref{eq:flat_grad},
\begin{equation}\label{eq:L2_seminorm_chain}
    \int_{\Omega}\left( \partial_{\rho} f- (\widetilde{\bD}^s )^*\widetilde{\bD}^s \partial^{\dot{H}^s}_{\rho} f \right) ~\zeta dx =0 ,\quad \forall \zeta \in T_\rho \M.
\end{equation}
Furthermore, denote by $\mathbf{1}$ the constant function that is equal to $1$ on $\Omega$. We then have that
\[
T_\rho \M=\operatorname{span}\{\mathbf{1}\}^\perp=\operatorname{ker}(\widetilde{\bD}^s)^\perp=\operatorname{Im}((\widetilde{\bD}^s)^*),
\]
where $^\perp$ is again taken with respect to the $L^2(\Omega)$ inner product. Hence, using the properties of adjoint operators, we obtain
\[    \partial^{\dot{H}^s}_{\rho} f=\left((\widetilde{\bD}^s)^*\widetilde{\bD}^s\right)^{\dagger} \partial_\rho f,\quad s>0.
\]

Next, we discuss the case $s<0$. As the dual space of $\dot{H}^{-s}(\Omega)$, the space $\dot{H}^s(\Omega)$ is equipped with the dual norm
\[
    \|\zeta\|_{\dot{H}^s(\Omega)}=\sup \left\{\langle \zeta, \phi \rangle :~\|\phi\|_{\dot{H}^{-s}(\Omega)}\leq 1 \right\}.
\]
Using the Poincar\'{e} inequality and the Riesz representation theorem, we obtain that for every $\zeta \in \operatorname{span}\{\mathbf{1}\}^\perp$, the map $\phi \mapsto \int_\Omega \zeta \phi$ is a continuous linear operator on $\dot{H}^{-s}(\Omega)$, and there exists a unique $\chi \in \dot{H}^{-s}(\Omega)$ such that
\[
    \int_\Omega \zeta~ \phi~ dx = \int_\Omega \widetilde{\bD}^{-s} \chi~ \widetilde{\bD}^{-s}\phi~ dx,\quad \forall \phi \in \dot{H}^{-s}(\Omega).
\]
Hence, $\zeta=(\widetilde{\bD}^{-s})^* \widetilde{\bD}^{-s} \chi$ together with the homogeneous Neumann boundary condition. Therefore, 
\[
    \|\zeta\|_{\dot{H}^{s}(\Omega)}=\|\widetilde{\bD}^{-s} \chi \|_{L^2}=\|\chi\|_{\dot{H}^{-s}(\Omega)}.
\]
Using similar arguments for the $s>0$ case, we obtain that
% \[
%     \chi = ((\widetilde{\bD}^{-s})^* \widetilde{\bD}^{-s})^\dagger \zeta,\quad \widetilde{\bD}^{-s} \chi = ((\widetilde{\bD}^{-s})^*)^\dagger \zeta,
% \]
% and so
\[
    \langle \zeta,\hat{\zeta}\rangle_{\dot{H}^s(\Omega)}=\langle \widetilde{\bD}^{-s}\chi,\widetilde{\bD}^{-s} \hat{\chi}\rangle_{L^{2}(\Omega)}=\left\langle ((\widetilde{\bD}^{-s})^*)^\dagger \zeta, ((\widetilde{\bD}^{-s})^*)^\dagger  \hat{\zeta}\right\rangle_{L^{2}(\Omega)},\quad \forall \zeta,\hat{\zeta}\in \operatorname{span}\{\mathbf{1}\}^\perp.
\]
For more details on $\dot{H}^s(\Omega)$ where $s<0$, we refer to~\cite[Lecture~13]{ambrosio21}.

\begin{comment}
and so the operator $\widetilde{\mathfrak{L}}^s=((\widetilde{\bD}^{-s})^*\widetilde{\bD}^{-s})^{-1}$ isometrically embeds $L^2(\Omega)$ functions with mean zero into the space $\dot{H}^{s}(\Omega)$, and
\begin{equation*}
    \langle \zeta,\hat{\zeta}\rangle_{\dot{H}^s(\Omega)}=\langle \chi,\hat{\chi}\rangle_{\dot{H}^{-s}(\Omega)}=\langle \widetilde{\mathfrak{L}}^s \zeta, \widetilde{\mathfrak{L}}^s \hat{\zeta}\rangle_{\dot{H}^{-s}(\Omega)},\quad \forall \zeta,\hat{\zeta} \in L^2(\Omega),~\text{s.t.}~\int_\Omega \zeta dx=\int_\Omega \hat{\zeta} dx=0.
\end{equation*}
\end{comment}

\begin{comment}
\begin{equation*}
 T_\rho \M = \left\{ \zeta \in L^2(\Omega):\int_\Omega \zeta=0 \right\},\quad 
    \langle \zeta, \hat{\zeta}\rangle_{g(\rho)}= \langle \widetilde{\mathfrak{L}}^s \zeta, \widetilde{\mathfrak{L}}^s \hat{\zeta}\rangle_{\dot{H}^{-s}(\Omega)}\ \text{for any } \zeta,\hat{\zeta}\in T_\rho \M.
\end{equation*}
\end{comment}

Next, we embed $\rho$ in space $\M= \left\{ \rho \in L^2(\Omega):\int_\Omega \rho=1 \right\}$ with $T_\rho \M = \operatorname{span}\{\mathbf{1}\}^\perp$ and
\[
    \langle \zeta, \hat{\zeta}\rangle_{g(\rho)}= \left\langle ((\widetilde{\bD}^{-s})^*)^\dagger \zeta, ((\widetilde{\bD}^{-s})^*)^\dagger  \hat{\zeta}\right\rangle_{L^{2}(\Omega)}, \quad \forall \zeta,\hat{\zeta}\in T_\rho \M.
\]
Furthermore, for a smooth function $f:\M \to \R$, we have that
\[
    \lim\limits_{t \to 0} \frac{f(\rho+t \zeta)-f(\rho)}{t}= \langle \partial^{\dot{H}^s}_{\rho} f,\zeta\rangle_{\dot{H}^s(\Omega)} 
    %= \int_{\Omega} ((\widetilde{\bD}^{-s})^*)^\dagger \partial^{\dot{H}^s}_{\rho} f \cdot ((\widetilde{\bD}^{-s})^*)^\dagger \zeta dx
    =\int_{\Omega}  ((\widetilde{\bD}^{-s})^* \widetilde{\bD}^{-s})^\dagger \partial^{\dot{H}^s}_{\rho} f ~\zeta dx.
\]
% \begin{equation*}
% \begin{split}
%     % \lim\limits_{t \to 0} \frac{f(\rho+t \zeta)-f(\rho)}{t}=&\int_{\Omega} \partial_{\rho} f ~\zeta dx,\\
%     \lim\limits_{t \to 0} \frac{f(\rho+t \zeta)-f(\rho)}{t}=& \langle \partial^{\dot{H}^s}_{\rho} f,\zeta\rangle_{\dot{H}^s(\Omega)} = \int_{\Omega} ((\widetilde{\bD}^{-s})^*)^\dagger \partial^{\dot{H}^s}_{\rho} f \cdot ((\widetilde{\bD}^{-s})^*)^\dagger \zeta dx\\
%     =&\int_{\Omega}  (\widetilde{\bD}^{-s})^\dagger ((\widetilde{\bD}^{-s})^*)^\dagger \partial^{\dot{H}^s}_{\rho} f ~\zeta dx=\int_{\Omega}  ((\widetilde{\bD}^{-s})^* \widetilde{\bD}^{-s})^\dagger \partial^{\dot{H}^s}_{\rho} f ~\zeta dx.
% \end{split}
% \end{equation*}
Together with~\eqref{eq:flat_grad}, we have
\[
    \int_{\Omega}\left( \partial_{\rho} f- ((\widetilde{\bD}^{-s})^* \widetilde{\bD}^{-s})^\dagger \partial^{\dot{H}^s}_{\rho} f \right) ~\zeta dx =0 ,\quad \forall \zeta \in T_\rho \M.
\]
After performing analysis similar to the $s>0$ case, we obtain that
%\begin{equation}\label{eq:gradf_Hsemi_ms}
\[    \partial_\rho^{\dot{H}^s} f= (\widetilde{\bD}^{-s})^* \widetilde{\bD}^{-s} \partial_\rho f,\quad s<0. \]
%\end{equation}

Finally, for both $s>0$ and $s<0$ cases, \eqref{eq:nat_grad_gen} leads to the $\dot{H}^s$ natural gradient
\begin{equation}\label{eq:nat_grad_Hsemi_s}
    \eta^{nat}_{\dot{H}^s}=\argmin_{\eta \in \R^p} \bigg \|\partial_\rho^{\dot{H}^s} f+\sum_{i=1}^p \eta_i \zeta_i \bigg\|^2_{\dot{H}^s(\R^d)},
\end{equation}
for smooth $\rho :\Theta \to \M$ and $f:\M \to \R$. As before, we can rewrite \eqref{eq:nat_grad_Hsemi_s} as a least-squares problem \begin{equation}\label{eq:nat_grad_Hsemi_s_L2}
        \eta_{\dot{H}^s}^{nat}=\argmin_{\eta \in \R^p} \bigg \| \left(\bL^* \right)^\dagger \partial_\rho f +\sum_{i=1}^p \eta_i~ \bL \zeta_i\bigg \|^2_{L^2(\Omega)},\quad \bL = \begin{cases} \widetilde{\bD}^s, & s>0 \\ ((\widetilde{\bD}^{-s})^*)^\dagger, & s<0 \end{cases}.
\end{equation}
Note that~\eqref{eq:nat_grad_Hsemi_s_L2} shares the same form with~\eqref{eq:nat_grad_Hs_L2}.

$H^s$ and $\Dot{H}^s$ natural gradients proved extremely useful for obtaining fast algorithms for solving the optimal transportation problem and related problems~\cite{jacobs2019solving,jacobs2020bf_ot,jacobs2021backforth}. The authors in these papers do not use the natural gradient descent formalism, but their methods are indeed Sobolev NGDs.

\subsection{Fisher--Rao--Hellinger natural gradient}\label{subsec:FSnat_math}
Here, we assume that $\rho$ is a strictly positive probability density function. We embed $\rho$ in $(\M,g)=(L^1(\R^d),g)$ where $T_{\rho}(\M)=L^2_{\rho^{-1}}(\R^d)$ and
\begin{equation*}
\langle \zeta,\hat{\zeta} \rangle_{g(\rho)} = \int_{\R^d}  \frac{\zeta(x)\hat{\zeta}(x)}{\rho(x)} dx,\quad \forall \zeta,\hat{\zeta} \in T_\rho \M.
\end{equation*}
This Riemannian metric is called the Fisher--Rao metric, and the distance induced by this metric is the Hellinger distance:
$d_{H}(\rho_1,\rho_2) \propto \|\sqrt{\rho_1}-\sqrt{\rho_2}\|_{L^2(\R^d)}$. Next, we will derive the natural gradient flow based on the Fisher--Rao metric, first introduced by Amari in~\cite{amari1998natural}.

For a smooth $\rho:\Theta \to \M$, we have that the tangent vectors are $\{\zeta_i\}$ in \eqref{eq:zeta's} but now considered as elements of $L^2_{\rho^{-1}}(\R^d)$. Therefore, the information matrix in~\eqref{eq:nat_kernel_matrix} becomes $G^{FR} (\theta) \in \R^{p\times p}$ where
\begin{equation*}
    G^{FR}_{ij}(\theta)=\int_{\R^d} \frac{\partial_{\theta_i} \rho(\theta,x) \partial_{\theta_j}\rho(\theta,x)}{\rho(\theta,x)} dx,\quad i,j=1,2,\ldots,p.
\end{equation*}
As before, $G^{FR}(\theta)$ is in general different from $G^{L^2}(\theta)$, $G^{H^s}(\theta)$ and $G^{\dot{H}^s}(\theta)$. 

Furthermore, for a smooth function $f:\M \to \R$, we have that
\begin{equation*}
    \lim\limits_{t \to 0} \frac{f(\rho+t \zeta)-f(\rho)}{t}=\int_{\R^d}  \frac{\partial^{FR}_{\rho} f \ \zeta}{\rho} dx,
\end{equation*}
and so from \eqref{eq:flat_grad} we obtain
\[
%\begin{equation}\label{eq:grad_FR}
    \partial_\rho^{FR} f = \rho~\partial_\rho f.
%\end{equation}
\]
Finally, for smooth $\rho:\Theta \to \M$ and $f:\M \to \R$, \eqref{eq:nat_grad_gen} leads to the Fisher--Rao natural gradient
\begin{equation}\label{eq:nat_grad_FR}
    \eta^{nat}_{FR}=\argmin_{\eta \in \R^p} \bigg \|\partial_\rho^{FR} f+\sum_{i=1}^p \eta_i \zeta_i \bigg\|^2_{L^2_{\rho^{-1}}(\R^d)}.
\end{equation}
The $L^2$ least-squares formulation is
\begin{equation}\label{eq:nat_grad_FR_L2}
    \eta^{nat}_{FR}=\argmin_{\eta \in \R^p} \bigg \|\frac{\partial_\rho^{FR} f}{\sqrt{\rho}}+\sum_{i=1}^p \eta_i \frac{\zeta_i}{\sqrt{\rho}} \bigg\|^2_{L^2(\R^d)}
    %=\argmin_{\eta \in \R^p} \bigg \|\sqrt{\rho}\ \partial_\rho f+\sum_{i=1}^p \eta_i \frac{\zeta_i}{\sqrt{\rho}} \bigg\|^2_{L^2(\R^d)}
    = \argmin_{\eta \in \R^p} \bigg \|(\bL^*)^\dagger\partial_\rho f+\sum_{i=1}^p \eta_i~\bL \zeta_i \bigg\|^2_{L^2(\R^d)},
\end{equation}
where $\bL \zeta = \frac{1}{\sqrt{\rho}} \zeta$ and $(\bL^*)^\dagger \partial_\rho f = \sqrt{\rho}\ \partial_\rho f$.

\subsection{$W_2$ natural gradient}\label{subsec:W2nat_math}
We first revisit the WNGD method~\cite{li2018natural}. Denoting by $\Pp(\R^d)$ the set of Borel probability measures on $\R^d$, we first introduce the Wasserstein metric on the space $\Pp(\R^d)$.  Furthermore, for $\rho \in \Pp(\R^d)$ and a measurable function $f:\R^d \to \R^n$, we denote by $f_\sharp \rho \in \Pp(\R^n)$ the probability measure defined by
$$
    (f_\sharp \rho) (B)=\rho(f^{-1}(B)),\quad \forall B\subset \R^n~\text{Borel},
$$
and call it the pushforward of $\rho$ under $f$. Next, for any $\rho_1,\rho_2 \in \Pp(\R^d)$, we denote $\Gamma(\rho_1,\rho_2)$ as the set of all possible joint measure $\pi \in \Pp(\R^{2d})$ such that
\begin{equation*}
    \int_{\R^{2d}} \left( \phi(x)+\psi(y)\right)d\pi(x,y)=\int_{\R^d} \phi(x)d\rho_1(x)+\int_{\R^d} \psi(y)d\rho_2(y)  
\end{equation*}
for all $(\phi,\psi) \in L^1(\rho_1)\times L^1(\rho_2)$. The $2$-Wasserstein distance is defined as
\[
%\begin{equation}\label{eq:w2}
    W_2(\rho_1,\rho_2)=\left( \inf_{\pi \in \Gamma(\rho_1,\rho_2)} \int_{\R^{2d}} |x-y|^2 d\pi(x,y) \right)^{\frac{1}{2}}.
%\end{equation}
\]

Denoting by $\Pp_2(\R^d)$ the set of Borel probability measures with finite second moments, we have that $\left( \Pp_2(\R^d),W_2\right)$ is a complete separable metric space; see more details in \cite[Chapters 7]{villani03} and \cite[Chapters 7]{ags08}. More intriguingly, one can build a Riemannian structure on $\left( \Pp_2(\R^d),W_2\right)$. Our discussion is formal and we refer to \cite[Chapters 8]{villani03} and \cite[Chapters 8]{ags08} for rigorous treatments.

In short, tangent vectors in $\left( \Pp_2(\R^d),W_2\right)$ are the infinitesimal spatial displacements of minimal kinetic energy. More specifically, for a given $\rho \in \Pp_2(\R^d)$, we define the tangent space, $T_\rho \Pp_2(\R^d)$, as a set of all maps $v\in L^2_\rho(\R^d;\R^d)$ such that
\begin{equation}\label{eq:tanW2}
    \|v+w\|_{L^2_\rho(\R^d;\R^d)} \geq \|v\|_{L^2_\rho(\R^d;\R^d)},\quad \forall w \in L^2_\rho(\R^d;\R^d)\quad \text{s.t.}\quad \nabla \cdot (w \rho)=0,
\end{equation}
where $L^2_\rho(\R^d;\R^d)$ denotes the $\rho$-weighted $L^2$ space. When $\rho=1$, it reduces to the standard $L^2$. The divergence equation above is understood in the sense of distributions; that is,
\begin{equation*}
    \int_{\R^d} \nabla \phi(x) \cdot w(x) \ \rho(x) dx=0,\quad \forall \phi \in C^\infty_c(\R^d).
\end{equation*}

If we think of $\rho$ as a fluid density, then an infinitesimal displacement $\frac{dx}{dt}= \dot{x} = v(x)$
leads to an infinitesimal density change given by the continuity equation
\begin{equation}\label{eq:continuity}
    \frac{\partial \rho}{\partial t}=-\nabla \cdot (v \rho) . 
\end{equation}
Therefore, for a given $w$ such that $\nabla \cdot (w\rho)=0$, we have that both $\dot{x}  = v(x)$ and $\dot{x}  = v(x) + w(x)$
lead to the same continuity equation~\eqref{eq:continuity}.
Therefore, the evolution of the density is insensitive to the divergence-free vector fields, and we project them out leaving only a unique vector field with the minimal kinetic energy. The kinetic energy of a vector field $v$ is then defined as
\[
   \|v\|_{L^2_\rho(\R^d;\R^d)}^2= \int_{\R^d} |v(x)|^2 \rho(x) dx.
\]
For a given evolution $t \mapsto \rho(t,\cdot)$, such a ``distilled'' vector field $v$ is unique and incorporates critical geometric information on the spatial evolution of $\rho$.

Next, we define a Riemannian metric by
\begin{equation*}
    \langle v,\hat{v} \rangle_{g(\rho)} = \int_{\R^d} v(x)\cdot \hat{v}(x) \ \rho(x) dx,\quad v,\hat{v} \in T_\rho \Pp_2(\R^d).
\end{equation*}
Furthermore, a mapping $\theta \in \Theta \mapsto \rho(\theta,\cdot) \in \Pp(\R^d)$ is differentiable if for every $\theta \in \Theta$, there exists a set of bases  $\{v_i(\theta)\}\subset T_\rho \Pp_2(\R^d)$ such that
\begin{equation}\label{eq:v_i}
\lim\limits_{t\to 0} \frac{W_2 \left( \rho(\theta+t \eta),\left( I+t\sum_{i=1}^p \eta_i v_i(\theta)\right)\sharp \rho(\theta)\right)}{t}=0,\quad \forall \eta \in \R^p,
\end{equation}
where $I$ is the identity map. Thus,
\begin{equation}\label{eq:v's}
    \Big\{v_1,v_2,\cdots,v_p \Big\}= \Big\{\partial^W_{\theta_1} \rho, \partial^W_{\theta_2} \rho,\cdots, \partial^W_{\theta_p} \rho\Big\},
\end{equation}
are the tangent vectors in \eqref{eq:tangent_vec_gen} for the $W_2$ metric. Thus, the information matrix in~\eqref{eq:nat_kernel_matrix} becomes $G^{W} (\theta) \in \R^{p\times p}$ where
\begin{equation*}
    G^{W}_{ij}(\theta)=\int_{\R^d} v_i(x)\cdot v_j(x) \ \rho(x) dx,\quad i,j=1,2,\ldots,p.
\end{equation*}

For $f:\Pp_2(\R^d)\to \R$, the Wasserstein gradient at $\rho$ is then $\partial^W_\rho f(\rho) \in T_\rho \Pp_2(\R^d)$, such that
\begin{equation}\label{eq:Was_grad}
    \lim \limits_{t\to 0}\frac{f\left((I+tv)\sharp\rho\right)-f(\rho)}{t}=\int_{\R^d} \partial_\rho^W f(\rho)(x)\cdot v(x) \ \rho(x) dx,\quad \forall v \in T_\rho \Pp(\R^d).
\end{equation}
Thus, for a smooth $\rho: \Theta \to \Pp_2(\R^d)$ and $f:\Pp_2(\R^d) \to \R$, the $W_2$ NGD direction for $\theta$ is given by
\begin{equation}\label{eq:nat_grad_W2}
    \eta^{nat}_{W_2}=\argmin_{\eta \in \R^p} \bigg \|\partial^W_\rho f+\sum_{i=1}^p \eta_i v_i \bigg\|^2_{L^2_\rho(\R^d;\R^d)}.
\end{equation}

As seen in \eqref{eq:zeta's}, the $L^2$ derivatives and gradients are typically easier to calculate. Here, we discuss the relations between the $L^2$ and $W_2$ metrics that are useful for calculating the $W_2$ derivatives and gradients, i.e., $\{v_i\}$ and $\partial^W_\rho f$. We formulate the main conclusions in~\Cref{prop1}. %Our formulation and derivation are informal, and we refer to \cite[Chapters 8]{villani03} and \cite[Chapters 8]{ags08} for a rigorous treatment.
\begin{proposition}\label{prop1}
Let $\{\zeta_i\}$ and $\{v_i\}$ follow~\eqref{eq:zeta's} and~\eqref{eq:v's}, respectively. The $\partial_\rho f$ and $\{\zeta_i\}$ in~\eqref{eq:nat_grad_L2} relate to the $\partial_\rho^W f$ and $\{v_i\}$ in~\eqref{eq:nat_grad_W2} as follows.
\begin{equation}\label{eq:Was-flat-connection}
    \partial_\rho^W f=\nabla \partial_\rho f,
\end{equation}
\begin{equation}\label{eq:zeta_v}
    v_i(\theta)=\argmin_v \Big\{\|v\|^2_{L^2_{\rho(\theta)}(\R^d;\R^d)}:~-\nabla \cdot (\rho(\theta) v )=\zeta_i(\theta) \Big\},\quad i=1,\ldots, p.
\end{equation}
\end{proposition}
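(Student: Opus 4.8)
The plan is to derive both identities from one first-order principle: perturbing $\rho$ by the pushforward under the near-identity map $I+tv$ produces, to leading order in $t$, the additive perturbation dictated by the continuity equation~\eqref{eq:continuity}. Concretely, the starting point is the formal expansion
\[
(I+tv)_\sharp\rho=\rho-t\,\nabla\cdot(v\rho)+o(t)\quad\text{in }L^2(\R^d),
\]
which is the infinitesimal version of~\eqref{eq:continuity} and the cornerstone of the Otto calculus on $(\Pp_2(\R^d),W_2)$; rigorous statements are in~\cite[Chapter~8]{ags08} and~\cite[Chapter~8]{villani03}. I would also use the companion expansion $\rho(\theta+t\eta)=\rho(\theta)+t\sum_{i=1}^p\eta_i\zeta_i(\theta)+o(t)$, which is just differentiability of $\theta\mapsto\rho(\theta)$ together with~\eqref{eq:zeta's}.

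For~\eqref{eq:Was-flat-connection}: plug the first expansion into the definition~\eqref{eq:flat_grad} of the $L^2$-derivative to obtain, for every $v\in T_\rho\Pp_2(\R^d)$,
\[
\lim_{t\to0}\frac{f\big((I+tv)_\sharp\rho\big)-f(\rho)}{t}=\int_{\R^d}\partial_\rho f(\rho)\,\big(-\nabla\cdot(v\rho)\big)\,dx=\int_{\R^d}\nabla\partial_\rho f(\rho)\cdot v\,\rho\,dx,
\]
the last step being an integration by parts (no boundary term under the decay/support conventions of this section). Comparing with the definition~\eqref{eq:Was_grad} of $\partial_\rho^W f$ gives $\int_{\R^d}\big(\partial_\rho^W f-\nabla\partial_\rho f\big)\cdot v\,\rho\,dx=0$ for all $v\in T_\rho\Pp_2(\R^d)$. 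By~\eqref{eq:tanW2}, $T_\rho\Pp_2(\R^d)$ is the $L^2_\rho$-orthogonal complement of the divergence-free fields, and any gradient field $\nabla\phi$ obeys $\int\nabla\phi\cdot w\,\rho\,dx=-\int\phi\,\nabla\cdot(\rho w)\,dx=0$ whenever $\nabla\cdot(\rho w)=0$; hence both $\partial_\rho^W f$ and $\nabla\partial_\rho f$ lie in $T_\rho\Pp_2(\R^d)$, and choosing $v=\partial_\rho^W f-\nabla\partial_\rho f$ closes the argument.

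For~\eqref{eq:zeta_v}: apply the first expansion with $v$ replaced by $\sum_{i=1}^p\eta_i v_i(\theta)$ and compare with the companion expansion of $\rho(\theta+t\eta)$. The defining property~\eqref{eq:v_i} forces the two first-order terms to agree, so $\sum_{i=1}^p\eta_i\zeta_i(\theta)=-\nabla\cdot\big(\rho(\theta)\sum_{i=1}^p\eta_i v_i(\theta)\big)$ for every $\eta\in\R^p$; taking $\eta$ to be a standard basis vector gives the constraint $-\nabla\cdot(\rho(\theta)v_i(\theta))=\zeta_i(\theta)$. It then remains to see that $v_i(\theta)$ is the \emph{minimal-norm} solution: the solution set of $-\nabla\cdot(\rho(\theta)v)=\zeta_i(\theta)$ is an affine translate of $N=\{w\in L^2_{\rho(\theta)}(\R^d;\R^d):\nabla\cdot(\rho(\theta)w)=0\}$, whose element of least $L^2_{\rho(\theta)}$-norm is the unique one orthogonal to $N$; but~\eqref{eq:tanW2} says exactly that $T_\rho\Pp_2(\R^d)=N^\perp$, and $v_i(\theta)\in T_\rho\Pp_2(\R^d)$ by construction, so $v_i(\theta)$ is that minimizer.

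The main obstacle is rigor at the level of the expansion $(I+tv)_\sharp\rho=\rho-t\,\nabla\cdot(v\rho)+o(t)$ and its counterpart for $\rho(\theta+t\eta)$: one needs convergence in a topology strong enough to pass to the limit in~\eqref{eq:flat_grad}, \eqref{eq:v_i} and~\eqref{eq:Was_grad}, and enough regularity of $\partial_\rho f$ that $\nabla\partial_\rho f\in L^2_\rho(\R^d;\R^d)$ and the integration by parts is legitimate. This is precisely the delicate point in the Otto/Riemannian picture of $(\Pp_2,W_2)$, so, consistent with the ``formal'' standpoint announced before the proposition, I would invoke the constructions of~\cite[Chapter~8]{ags08} and~\cite[Chapter~8]{villani03} rather than reprove them; everything else — the integration by parts, the matching of first-order terms, and the Helmholtz-type orthogonality argument — is routine.
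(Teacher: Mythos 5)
Your proposal is correct and follows essentially the same route as the paper's own (informal) derivation: both arguments rest on the expansion $(I+tv)_\sharp\rho=\rho-t\,\nabla\cdot(\rho v)+o(t)$, match first-order terms of the two expansions of $\rho(\theta+t\eta)$ to get $-\nabla\cdot(\rho v_i)=\zeta_i$, invoke~\eqref{eq:tanW2} for minimality, and use integration by parts against~\eqref{eq:flat_grad} and~\eqref{eq:Was_grad} for~\eqref{eq:Was-flat-connection}. The extra detail you supply (the Helmholtz-type orthogonality argument and the affine-translate characterization of the minimal-norm solution) simply makes explicit the steps the paper compresses into ``after taking~\eqref{eq:tanW2} into account.''
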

\begin{proof}[Informal derivation]
Given a vector field $v$ and a small $t>0$, we have that $I+ t v $ is a first-order approximation of the trajectory below where $I$ is the identity function.
Note that in Lagrangian coordinates, $\dot{x} = v(x)$. Thus, from the continuity equation~\eqref{eq:continuity}, we have that
\begin{equation}\label{eq:infinitesimal_cty}
     \left(I+t v\right)\sharp \rho = \rho-t~\nabla \cdot (\rho v)  +o(t).
\end{equation}
Recall that $\zeta_i=\partial_{\theta_i} \rho$ and $v_i=\partial_{\theta_i}^W \rho$. Using this observation together with \eqref{eq:zeta's} and \eqref{eq:v_i}, we have
\begin{eqnarray*}
    \rho(\theta+t \eta)&=&\rho(\theta)+ t \sum_{i=1}^p \eta_i \zeta_i(\theta)+o(t),\\
    \rho(\theta+t \eta)&=&\rho(\theta)-t \sum_{i=1}^p \eta_i \nabla \cdot (\rho(\theta) v_i(\theta) )+o(t),
\end{eqnarray*}
for all $\eta \in \R^p$. By comparing the above two equations, we have
\begin{equation} \label{eq:v-zeta-relation}
    -\nabla \cdot (\rho(\theta) v_i(\theta))=\zeta_i(\theta),\quad 1\leq i \leq p.
\end{equation}
After taking~\eqref{eq:tanW2} into account, we obtain~\eqref{eq:zeta_v}.

Next, we establish a connection between $\partial_\rho f$ and $\partial^W_\rho f$. Combining \eqref{eq:flat_grad}, \eqref{eq:Was_grad}, \eqref{eq:infinitesimal_cty}-\eqref{eq:v-zeta-relation},
\begin{equation*}
        \int_{\R^d} \partial^W_\rho f(\rho)(x)\cdot v(x) \rho(x)dx=-\int_{\R^d} \partial_\rho f(\rho)(x) \nabla \cdot \left(\rho(x) v(x) \right) dx 
    = \int_{\R^d} \nabla \partial_\rho f(\rho)(x)  \cdot v(x)\ \rho(x) dx,
\end{equation*}
for all $v\in T_\rho \Pp_2(\R^d)$. Hence, we obtain~\eqref{eq:Was-flat-connection}.
\end{proof}

Similar to previous cases, we want to turn~\eqref{eq:nat_grad_W2} into an unweighted $L^2$ formulation. Using results in~\Cref{prop1}, we know that the Wasserstein tangent vectors at $\rho$ are velocity fields of minimal kinetic energy in $L^2_\rho(\R^d;\R^d)$. We first perform a change of variables
\begin{equation*}
    \Tilde v_i=\sqrt{\rho}~v_i,\quad i  = 1,\ldots, p,
\end{equation*}
where the set of $\{v_i\}$ follows~\eqref{eq:v's}. As a result, for each $i = 1,\ldots, p$, \eqref{eq:zeta_v} reduces to
\begin{equation} \label{eq:zeta_to_v}
\Tilde{v}_i(\theta)=\argmin\left\{\|\Tilde{v}\|^2_{L^2(\R^d;\R^d)}: \bB \Tilde{v} =\zeta_i(\theta)\right\}, \text{\quad where\quad } \bB  \Tilde{v}  = -\nabla \cdot \left(\sqrt{\rho(\theta )}~\Tilde{v} \right).
\end{equation}
We then have $\Tilde v_i = \bB ^\dagger \zeta_i$ for $i  = 1,\ldots, p$. Denote the adjoint operator of $\bB$ as $\bB^*$. Note that $\bB^* \eta =\sqrt{\rho} \nabla \eta$. Combining these observations with~\Cref{prop1}, formulation \eqref{eq:nat_grad_W2} becomes
\begin{equation}\label{eq:nat_grad_W2_L2}
\begin{split}
        \eta^{nat}_{W_2} &= \argmin_{\eta \in \R^p} \bigg \|\sqrt{\rho } \nabla \partial_\rho f+\sum_{i=1}^p \eta_i  \Tilde{v}_i \bigg\|^2_{L^2(\R^d;\R^d)}  =  \argmin_{\eta \in \R^p} \bigg \| \bB ^* \partial_\rho f+\sum_{i=1}^p \eta_i \bB ^\dagger \zeta_i \bigg\|^2_{L^2(\R^d;\R^d)} \\
        & = \argmin_{\eta \in \R^p} \bigg \| (\bL^*)^\dagger \partial_\rho f+\sum_{i=1}^p \eta_i \bL \zeta_i \bigg\|^2_{L^2(\R^d;\R^d)},\quad \text{where $\bL = \bB ^\dagger$.}
\end{split}
\end{equation}
We have reformulated the $W_2$ NGD as a standard $L^2$ minimization~\eqref{eq:nat_grad_W2_L2}.

% \blue{LN: Add $L^2$ formulation for the Wasserstein natural gradient flow.}

% \subsection{Relating $L^2$ and $W_2$ metrics}\label{subsec:L2W2_math}

\begin{remark}\label{rmk:Hm1_vs_W2}
Note that Wasserstein natural gradient is closely related to the $\dot{H}^{-1}$ natural gradient presented in \Cref{subsec:Hs_semi_nat_math}. Indeed, taking $s=-1$ in \eqref{eq:nat_grad_Hsemi_s_L2} we obtain that
\begin{equation*}
    \eta_{\dot{H}^{-1}}^{nat}= \argmin_{\eta \in \R^p}\|\nabla \partial_\rho f+\sum_{i=1}^p \eta_i (\nabla^*)^\dagger \zeta_i\|^2_{L^2(\Omega)},
\end{equation*}
which matches \eqref{eq:nat_grad_W2_L2} except that the weighted divergence operator $\bB$ defined in~\eqref{eq:zeta_to_v} is replaced with the unweighted divergence operator $-\nabla \cdot = \nabla^*$. When $\rho(\theta)\equiv 1$, these two operators coincide.

In principle, one may consider NGDs generated by the generalized operator
\begin{equation*}
    \bB_k \tilde{v}=-\nabla \cdot \left(\rho(\theta)^{k} \Tilde{v} \right),\quad \bL = (\bB_k)^\dagger,
\end{equation*}
where the case $k=0$ corresponds to the $\dot{H}^{-1}$ natural gradient and $k=1/2$ corresponds to the $W_2$ NGD. The term $\rho^k$ is often referred to as mobility in gradient flow equations~\cite{lisini2012cahn}.
\end{remark}

\begin{remark}
NGDs based upon the $L^2$~norm \eqref{eq:nat_grad_L2}, the $H^s$ norm~\eqref{eq:nat_grad_Hs}, the $\dot{H}^s$ norm~\eqref{eq:nat_grad_Hsemi_s}, the Fisher--Rao metric~\eqref{eq:nat_grad_FR} and the $W_2$ metric~\eqref{eq:nat_grad_W2} are similar in form but equipped with different underlying metric space $(\M,g)$ for $\rho$. All of them can be reduced to the same common form but with a different $\bL$ operator; see~\eqref{eq:nat_grad_L2}, \eqref{eq:nat_grad_Hs_L2}, \eqref{eq:nat_grad_Hsemi_s_L2}, \eqref{eq:nat_grad_FR_L2} and~\eqref{eq:nat_grad_W2_L2}, respectively. As a result, we expect that they may perform differently in the optimization process as NGD methods, which we will see later from numerical examples in~\Cref{sec:numerics}.
\end{remark}

\subsection{Gauss--Newton algorithm as an $L^2$ natural gradient}\label{sec:L2_GN}
Next, we give an example to show that the Gauss--Newton method, a popular optimization algorithm~\cite{nocedal2006numerical}, can be seen as an NGD method. More discussions on this connection can be found in~\cite{martens2020new}. Assume that $f$ measures the least-squares difference between the model $\rho(x;\theta)$ and the reference $\rho^*(x)$ distributions; that is,
\begin{equation} \label{eq:least-squares-obj}
    f(\rho(\theta))=\frac{1}{2}\int_{\Omega} |\rho(x;\theta)-\rho^*(x) |^2 dx,
\end{equation}
where $\Omega$ is the spatial domain. Thus, the problem of finding the parameter $\theta$ becomes
\begin{equation*}
    \inf_\theta f(\rho(\theta))=\inf_\theta\frac{1}{2}\int_{\Omega} | \rho(x ; \theta)-\rho^*(x)  | ^2 dx= \inf_\theta \frac{1}{2} \int_{\Omega}  | r(x;\theta) |^2 dx,\quad r(x;\theta) = \rho(x;\theta) - \rho^*(x).
\end{equation*}
We will denote $\rho(x;\theta)$ as $\rho(\theta)$ and $r(x;\theta)$ as $r(\theta)$.

The Gauss--Newton (GN) algorithm~\cite{nocedal2006numerical} is one popular computational method to solve this nonlinear least-squares problem. In the continuous limit, the algorithm reduces to the flow
\begin{equation}\label{eq:GN}
\dot{\theta}=\eta^{GN}=\argmin_{\eta \in \R^p} \left\|r(\theta)+\sum_{i=1}^p \partial_{\theta_i} r(\theta) \eta_i\right\|_{L^2(\Omega)}^2 = \argmin_{\eta \in \R^p} \left\|\rho(\theta)-\rho^*+\sum_{i=1}^p \partial_{\theta_i} \rho(\theta) \eta_i\right\|_{L^2(\Omega)}^2
\end{equation}
where we choose a mininal-norm $\eta$ if there are multiple solutions. The algorithm is based on a first-order approximation of the residual term 
$r(\theta+\eta)=r(\theta)+\sum_{i=1}^p \partial_{\theta_i} r(\theta) \eta_i+o(\eta)$.

A key observation is that \eqref{eq:GN} is precisely the $L^2$ natural gradient flow. Indeed, we have that
\begin{equation*}
    \lim \limits_{t \to 0} \frac{f(\rho+t \zeta)-f(\rho)}{t}=\int_{\Omega} \left(\rho(\theta)-\rho^* \right) \zeta(x) dx,
\end{equation*}
and therefore $\partial_\rho f(\rho)=\rho(\theta)-\rho^*$. As a result, \eqref{eq:nat_grad_L2} reduces to \eqref{eq:GN} precisely.

The convergence rate of the GN method is between linear and quadratic based on various conditions~\cite{nocedal2006numerical}. 
%and whether the matrix
% $
% \left( \langle \partial_{\theta_i} \rho(\theta), \partial_{\theta_j} \rho(\theta) \rangle \right)
% $
% dominates the second-order term in the Hessian of $\theta \mapsto \|\rho(\theta)-\rho^*\|^2_2$
Typically, the method is viewed as an alternative to Newton's method if one aims for faster convergence than GD but does not want to compute/store the whole Hessian.

\begin{remark}
The $L^2$ natural gradient flow perspective of interpreting the GN algorithm suggests that mature numerical techniques for the GN algorithm are also applicable to \textit{general} NGD methods, including those we introduced earlier in~\Cref{sec:math_nat}. For further connections between GN algorithms, Hessian-free optimization and NGD see discussions and references in~\cite{schraudolph2002fast,pascanu2014revisiting,martens2015optimizing,martens2020new}.
\end{remark}
\begin{remark}\label{rmk:cost_discuss}
All natural gradient methods introduced in this section can be formulated as $\eta^{nat}  =\argmin_{\eta \in \R^p}  \|(\bL^*)^\dagger \partial_\rho f+\sum_{i=1}^p \eta_i~ \bL \zeta_i \|^2_{L^2}$,
while different metric space for $\rho$ gives rise to different operator $\bL$. The computational complexity of approximating $\bL$  and $(\bL^*)^\dagger$ determines the cost of implementing a particular NGD method. In general, $L^2$, $H^s$ and $\dot{H}^s$ NGDs are easier to implement as $\bL$ and $(\bL^*)^\dagger$ do not depend on $\rho$, and thus can be re-used from iteration to iteration once computed. On the other hand, for Fisher--Rao and Wasserstein NGDs, $\bL$ is $\rho$-dependent. If we have access to $\rho$ directly,  the Fisher--Rao information matrix only involves a diagonal scaling by $1/\rho$ compared to the $L^2$ information matrix. If we only have access to $\rho$ through an empirical distribution, there are also very efficient methods of estimating $G^{FR}$; see~\cite{martens2020new}. In contrast, the WNGD is the most expensive among all examples discussed in~\Cref{sec:math_nat}. Next, in~\Cref{sec:num_nat}, we will see that there are still efficient numerical methods to mitigate the computational challenges.
\end{remark}

\section{General computational approach} \label{sec:num_nat}
In this section, we discuss our general strategy to calculate the NGD directions. As mentioned earlier, our approach is based on efficient least-squares solvers since the problem of finding the NGD direction can be formulated as~\eqref{eq:nat_grad_gen}. In particular, we will introduce strategies when the tangent vector $\partial_\theta \rho$ cannot be obtained explicitly, which is the case for large-scale PDE-constrained optimization problems.  We will first describe the general strategies and then explain how to apply these techniques to different types of natural gradient discussed in~\Cref{sec:math_nat}. We will work in the discrete setting hereafter. 

%Although we only discuss the $L^2$ and $W_2$ cases here, one can extend to other metric space following the same fundamental principles techniques. We will work in the discrete setting hereafter.

By slightly abusing the notation, we assume that $\rho:\Theta \to \R^k$ is a proper discretization of $\theta \mapsto \rho(\theta)$ while $\Theta \subseteq \R^p$. Similarly, let $f:\R^k \to \R$ be a suitable discretization of $\rho \mapsto f(\rho)$. Hence, the standard finite-dimensional gradient and Jacobian, 
$ \partial_\rho f \in \R^k $ and $\partial_\theta \rho \in \R^{k \times p}$,
are discretizations of their continuous counterparts discussed in~\Cref{subsec:L2nat_math}. In particular, we denote the Jacobian
\begin{equation}\label{eq:Z's}
Z=(\zeta_1~\zeta_2~\cdots~\zeta_p)=\partial_\theta \rho, \quad \text{where }  \zeta_j=\partial_{\theta_j} \rho. %,\quad \varphi = \partial_\rho f.
\end{equation}
Without loss of generality, we always assume $k>p$. That is, we have more data than parameters.

\subsection{A unified framework}\label{subsec:unified}
For numerical computation, our main proposal is to translate the general formula~\eqref{eq:nat_grad_gen} and~\eqref{eq:nat_std_relation} for the NGD direction into a discrete least-squares formulation, given any Riemannian metric space $(\M, g)$.

Based on~\eqref{eq:nat_grad_L2}, the discrete $L^2$ natural gradient problem reduces to the least-squares problem
\[  \eta^{nat}=\argmin_{\eta \in \R^p}\|\partial_\rho f+Z \eta\|_2^2. \]
% or
% \begin{equation*}
%     \eta^{nat}=-(Z^\top Z)^{\dagger} (Z^\top \partial_\rho f)
% \end{equation*}
As we have seen in~\Cref{sec:math_nat}, besides $L^2$, the computation of the $H^s$, $\dot{H}^s$, Fisher--Rao, and WNGD directions can also be formulated as a least-squares problem
\begin{equation} \label{eq:unified}
    \eta^{nat}_L =\argmin_{\eta \in \R^p} \big \| (L^\top)^\dagger \partial_\rho f+L Z \eta \big \|_2^2 = \argmin_{\eta \in \R^p} \big \|(L^\top)^\dagger \partial_\rho f + Y \eta \big \|_2^2,\quad \text{where }  Y=LZ,
\end{equation}
for a matrix $L$ representing the discretization of the continuous operator $\bL$ for different metric spaces as discussed in~\Cref{sec:math_nat}. We regard~\eqref{eq:unified} as a unified framework since changing the metric space for the natural gradient only requires changing $L$ while the other components remain fixed.

Note that one can compute the standard gradient $\partial_\theta f = \partial_\theta \rho ^\top \partial_\rho f =  Z^\top \partial_\rho f$ by chain rule. From~\eqref{eq:unified}, we can also obtain the common formulation for the NGD as
\begin{equation}\label{eq:unified_kernel}
\begin{split}
    \eta^{nat}_L=&-(Z^\top L^\top L Z)^{-1} (Z^\top L^\top (L^\top)^\dagger\partial_\rho f)=-(Y^\top Y)^{-1} (Z^\top \partial_\rho f)\\
    =&-(Y^\top Y)^{-1} \partial_\theta f = -G_L^{-1} \partial_\theta f,
\end{split}
\end{equation}
where $G_{L} = Y^\top Y$ is the corresponding information matrix defined in~\eqref{eq:nat_kernel_matrix}. % Next, we will explain our methods in the general setup~\eqref{eq:unified}, and later specify $L$ for each natural gradient flow.

\begin{remark}
The unified framework~\eqref{eq:unified} is general and applies to cases beyond NGDs discussed in~\Cref{sec:math_nat}. For $\rho$ in a metric space $(\M, g)$ with a corresponding tangent space $T_\rho \M$, we have
\[
\langle \zeta_1, \zeta_2 \rangle_{g(\rho)} \approx \vec{\zeta_1}^\top A^g_\rho \  \vec{\zeta_2}, \quad \forall \zeta_1, \zeta_2 \in T_\rho \M,
\]
where $\vec{\zeta_1}$, $\vec{\zeta_2}$ denote the discretized $\zeta_1$, $\zeta_2$. A proper discretization that preserves the metric structure should yield a symmetric positive definite matrix $A^g_\rho$ that admits decomposition $A^g_\rho = L^\top L$. As a result, the discretization of~\eqref{eq:nat_std_relation} turns into the same formula as~\eqref{eq:unified}:
\begin{eqnarray*}
\eta_L^{nat} &=& - (Z^\top A^g_\rho Z )^{-1} (Z^\top \partial_\rho f ) \label{eq:nat_std_relation_disc} = - (Z^\top L^\top L Z )^{-1} (Z^\top L^\top  (L^\top )^\dagger \partial_\rho f )  \\ % = - \left(Z^\top L^\top L Z \right)^{-1} \left(Z^\top \partial_\rho f \right)  
&=& \argmin_{\eta \in \R^p} \big\| (L^\top)^\dagger \partial_\rho f + Y \eta \big\|^2_2, \quad \text{where }  Y=LZ.
\end{eqnarray*}
The concrete form of $L$ will depend on the specific metric space $(\M, g)$. 

\begin{comment}
The discrete formulation~\eqref{eq:nat_lsq} implies that we can numerically compute any natural gradient descent direction as a discrete least-squares problem. An alternative way to derive~\eqref{eq:nat_lsq} is by discretizing~\eqref{eq:nat_grad_gen} and obtain a weighted least-squares problem, which will eventually translate back to~\eqref{eq:nat_lsq} due to the equivalence between~\eqref{eq:nat_grad_gen} and~\eqref{eq:nat_std_relation}.
\end{comment}

\end{remark}

\begin{comment}
Above, we also take into account the case when columns of $Z$ are not linearly independent. Although the rest of the natural gradients are not in $L^2$, we reduce them to $L^2$ in the discrete setting. Our strategy is as follows. Assume that $L:\R^k \to \R^l$ is a linear operator where $l \geq k$. The flat tangent vectors are then transformed to \textit{flattened} $H^s,\dot{H}^s,FR, W_2$ tangent vectors via $y=L \zeta$, and the tangent space becomes $\operatorname{R}(L)$, the range of $L$. So $Z$ goes to $Y=LZ$, and we want to write natural gradient problems as
\begin{equation*}
    \eta^{nat}_L=\argmin_{\eta}\|\chi+Y \eta\|_2^2
\end{equation*}
for a properly chosen $\chi$. For that, we need two things:
\begin{enumerate}
    \item $\chi \cdot y=\chi \cdot L \zeta = \partial_\rho f \cdot \zeta$ for all $\zeta \in \R^l$.
    \item $\chi \in \operatorname{R}(L)$.
\end{enumerate}
The first condition yields $L^\top \chi = \partial_\rho f$. Note that $L^\top:\R^l \to \R^k$, and so $L^\top \chi = \partial_\rho f$ is underdetermined with solutions $\eta$ in an affine space parallel to $\operatorname{N}(L^\top)$. So the solution with the minimal norm is in the orthogonal complement $\operatorname{N}(L^\top)^\perp=\operatorname{R}(L)$. Thus, the second condition yields
\begin{equation*}
    \chi=(L^\top)^\dagger \partial_\rho f
\end{equation*}
\end{comment}

% $$
% \langle \partial_\rho f,  \partial_\theta \rho \rangle_{L^2(\R^d)} = \langle \partial^g_\rho f, \partial^g_\theta \rho\rangle_{g(\rho(\theta))}.
% $$

Next, we will first assume that $L$ is given and discuss how to compute $\eta^{nat}_L$ provided whether the Jacobian $Z$ is available or not; see~\Cref{subsec:Z_avail} and~\Cref{subsec:Z_unavail}. Later in~\Cref{subsec:general_L}, we will comment on obtaining the matrix $L$ based on the natural gradient examples in~\Cref{sec:math_nat}.

\subsection{$Z$ available}\label{subsec:Z_avail}
When $Z$ is available, there are two main methods to compute $\eta_L^{nat}$. 

One may follow~\eqref{eq:unified_kernel} by first constructing the information matrix $G_L = Y^\top Y$ and then computing its inverse. This is a reasonable method when the number of parameters, i.e., $p$, is small, and $G_L$ is invertible. However, if $G_L$ is singular or has bad conditioning, it is \textit{more advantageous} to compute $\eta_L^{nat}$ following~\eqref{eq:unified}.
Note that the condition number of $G_L$ can be nearly the square of the condition number of $L$, making it more likely to suffer from numerical instabilities.

The second and also our recommended approach is to solve the least-squares problem~\eqref{eq:unified}. We may utilize the QR factorization to do so~\cite{golub1996matrix}. Assume that $Y = LZ$ has full column rank. Let $Y = QR$ where $Q$ has orthonormal columns and $R$ is an upper triangular square matrix. Thus,
\begin{equation}\label{eq:sol_Z_avail}
\eta_L^{nat}= - Y^\dagger (L^\top)^\dagger \partial_\rho f  = - R^{-1} Q^\top (L^\top)^\dagger \partial_\rho f.
\end{equation}
The additional computational cost of evaluating $\eta^{nat}_{L}$ after the QR decomposition is the backward substitution to evaluate $R^{-1}$ instead of inverting $R$ directly.

%In some cases, $\{\zeta_j\}$ admit explicit formulae. 
If the given model $\rho(\theta)$ allows us to write down how $\rho$ depends on $\theta$ analytically, then the Jacobian $\partial_\theta \rho$ is readily available. In such cases, we can directly solve~\eqref{eq:unified} using the QR decomposition to obtain the NGDs; see~\Cref{subsec:GaussianMixture} for a Gaussian mixture example.

We summarize the algorithm when the Jacobian $Z$ and the matrices $L,(L^\top)^\dagger$ are available; see~\Cref{subsec:general_L} for how to obtain $L$ and $(L^\top)^\dagger$ for examples presented in~\Cref{sec:math_nat} and~\Cref{sec:qr_low_rank} for discussions when $Y=LZ$ is rank-deficient.
\begin{algorithm}
\caption{Compute the NGD direction given $Z$\label{alg:Z_avail}, $L$, $(L^\top)^\dagger$ and $\partial_\rho f$.}
\begin{algorithmic}[1]
\State Compute $Y = LZ$.
\State Perform economy-size QR factorization: $[Q,R] = \texttt{qr}(Y)$.
\State Compute the NGD direction $\eta_L^{nat} = - R^{-1} Q^\top (L^\top)^\dagger \partial_\rho f$.
\end{algorithmic}
\end{algorithm}

\subsection{$Z$ unavailable}\label{subsec:Z_unavail}
Often, the model $\rho(\theta)$ is not available analytically, but the relationship between $\rho$ and $\theta$ is given implicitly via solutions of a system, e.g., a PDE constraint,
\begin{equation}\label{eq:h_constraint}
    h(\rho,\theta)= \bf 0,
\end{equation}
for some smooth $h:\R^k \times \R^p \to \R^k$ such that $\det (\partial_\rho h) \neq 0$. In such cases, the Jacobian $Z = \partial_\theta \rho$ in~\eqref{eq:Z's} is not readily available and has to be computed or implicitly evaluated. 

\subsubsection{The implicit function theorem and adjoint-state method}\label{subsec:adj_implicit}

Based on the first-order variation of~\eqref{eq:h_constraint}, the most direct option to proceed is to apply the implicit function theorem
\begin{equation}\label{eq:IFT}
 \partial_\rho h  \ \partial_\theta \rho     = \partial_\rho h \  Z = -\partial_\theta h.
\end{equation}
The above equation consists of $p$ linear systems in $k$ variables. If $\partial_\rho h$ has a simple format, or the size of $\theta$ is not too large, it could still be computationally feasible to first obtain $Z = \partial_\theta \rho$ by solving~\eqref{eq:IFT}, and then follow strategies in~\Cref{subsec:Z_avail} to compute the NGD.

However, if $p$ is large, a more efficient option is to use methods based on the so-called adjoint-state method~\cite{plessix2006review}. Note that $Z$ is the rate of change of the \textit{full state} $\rho$ with respect to $\theta$. Thus, if we only need the rate of change of $\rho$ \textit{along a specific vector} $\xi \in \R^k$, we do not need the whole $Z$; instead, we need $\xi^\top Z$ which can be calculated by solving only one linear system for each $\xi$.

Indeed, for a given $\xi \in \R^k$, let us consider the \textit{adjoint equation}
\begin{equation} \label{eq:adj}
    \lambda_\xi ^\top \partial_\rho h = \xi^\top \quad\Longleftrightarrow\quad \left( \partial_\rho h \right)^\top\lambda_\xi =  \xi.
\end{equation}
Combining \eqref{eq:IFT} and \eqref{eq:adj}, we obtain that
\begin{equation}\label{eq:adj_grad}
   Z^\top  \xi =   Z^\top \left( \partial_\rho h \right)^\top\lambda_\xi   =- \partial_\theta h^\top  \lambda_\xi.
\end{equation}
The vector $\lambda_\xi$ in \eqref{eq:adj} is called the \textit{adjoint variable} corresponding to the given vector $\xi$.

Here is an important example where we do not need the full $Z$. If we choose $\xi  = \partial_\rho f \in \R^{k}$, then~\eqref{eq:adj_grad} gives the standard gradient
\begin{equation}\label{eq:std_grad}
    \partial_\theta f(\rho(\theta)) = \partial_\theta \rho^\top   \partial_\rho f=    Z^\top \partial_\rho f  = - \partial_\theta h^\top \  \lambda_\xi,
\end{equation}
where $\lambda_\xi$ is the solution to~\eqref{eq:adj} with $\xi =\partial_\rho f \in \R^{k}$. This is a widely used method to efficiently evaluate the gradient of a large-scale optimization in solving PDE-constrained optimization problems originated from optimal control and computational inverse problems~\cite{plessix2006review}.

Next, we will explain in detail how to harness the power of the adjoint-state method to evaluate the general NGD directions through iterative methods.

\subsubsection{Krylov subspace methods}\label{subsec:Z_unavail_Krylov}
Given an arbitrary vector $\eta \in \R^p$, we may evaluate \begin{equation}\label{eq:nat_eval}
G_L \ \eta  = Z^\top L^\top L Z \   \eta 
\end{equation}
through the adjoint-state method even if we cannot access the information matrix $G_L$ since the Jacobian $Z$ is unavailable directly. Let $\widehat{\rho} \in \R^k$ be an arbitrary vector, and consider the following constrained optimization problem~\cite{metivier2013full}
\begin{equation}\label{eq:opt_L2nat}
    ~\min_\theta J(\rho(\theta)) = \rho^\top \widehat{\rho},\quad \mbox{s.t.}~~ h(\rho(\theta), \theta) = \bf 0.
\end{equation}
Note that this objective function $J(\rho(\theta))$ in~\eqref{eq:opt_L2nat} is different from the main objective function~\eqref{eq:main} but with the same constraint~\eqref{eq:h_constraint}. A direct calculation reveals that the gradient of $J(\rho(\theta))$ with respect to the parameter $\theta$ is $Z^\top \widehat{\rho}$. Therefore, if we set $\widehat{\rho} = L^\top L Z   \eta $, the gradient $$
\partial_\theta J(\rho(\theta)) = Z^\top \widehat{\rho} = Z^\top L^\top L Z \   \eta = G_L \ \eta,
$$
which is exactly what we aim to compute in~\eqref{eq:nat_eval}.

From the constraint $h(\rho(\theta),\theta) = \bf 0$ and its first-order variation~\eqref{eq:IFT}, we have 
$$
    \partial_\rho h \ Z\  \eta +   \partial_\theta h\  \eta = \bf 0.
$$
Thus, $Z \ \eta$ can be obtained as the solution to a linear system with respect to $ \gamma$:
\begin{equation}\label{eq:fwd_linear}
     \partial_\rho h\  \gamma = - \partial_\theta h \  \eta.
\end{equation}
Based on the adjoint-state method introduced in~Section~\ref{subsec:adj_implicit}, we can compute the gradient as 
$$
\partial_\theta J(\rho(\theta)) =  - \partial_\theta h^\top \  \lambda ,
$$ 
where $\lambda$ satisfies the adjoint equation below with a given $ \gamma$ that solves~\eqref{eq:fwd_linear},
\begin{equation} \label{eq:adj_linear}
\partial_\rho h^\top   \lambda =  \partial_\rho J  =  \widehat \rho =  L^\top L Z  \eta = L^\top L  \gamma.
\end{equation}

To sum up, with a fixed $\theta$ and the corresponding $\rho(\theta)$, we have an efficient way to evaluate the \textit{linear action} $\eta \mapsto G_L \eta$ for any given $\eta$ by three steps; see~\Cref{alg:linear_action}.
\begin{algorithm}
\caption{Evaluate the linear action $\eta \mapsto G_L \eta$ given  an arbitrary vector $\eta$.\label{alg:linear_action}}
\begin{algorithmic}[1]
\State Given the implicit constraint $h$, solve the linear system $\partial_\rho h\  \gamma = - \partial_\theta h \  \eta$ and obtain $\gamma$.
\State Given linear actions based on $L$ and $L^\top$, solve the linear system $\partial_\rho h^\top   \lambda = L^\top L \gamma$ and obtain $\lambda$.
\State Evaluate $- \partial_\theta h^\top \ \lambda$, which equals to $G_L\ \eta$.
\end{algorithmic}
\end{algorithm}

% \begin{enumerate}[(1)]
%     \item Given $\eta$, solve the linear system~\eqref{eq:fwd_linear} to obtain $\gamma$.
%     \item Given $\gamma$ and $L^\top L$, solve the linear system~\eqref{eq:adj_linear} to obtain $\lambda$,
%     \item Evaluate $- \partial_\theta h^\top \ \lambda$, which is equal to the linear action $G_L\ \eta$.
% \end{enumerate}

Given the linear action $\eta\mapsto G_L\eta$, we need to solve the linear system 
\begin{equation}\label{eq:G_L action}
G_L\ \eta_L^{nat} = -\partial_\theta f(\rho(\theta))
\end{equation}
to find the NGD direction $\eta_L^{nat}$. As seen in~\eqref{eq:std_grad}, we can obtain the right-hand side $-\partial_\theta f(\rho(\theta))$ through the adjoint-state method. One may then solve for $\eta_L^{nat}$ through iterative linear solvers based on the Krylov subspace methods~\cite{saad2003iterative}, e.g., the conjugate gradient method. We summarize all the steps above in~\Cref{alg:Z_unavail}.
\begin{algorithm}
\caption{Compute the NGD direction when $Z$ is not explicitly available.\label{alg:Z_unavail}}
\begin{algorithmic}[1]
\State Given the constraint $h$, solve the linear system $\left( \partial_\rho h \right)^\top\lambda = \partial_\rho f$ and obtain $\lambda$.
\State Compute the parameter gradient $\partial_\theta f(\rho(\theta)) = \partial_\theta \rho^\top   \partial_\rho f = - \partial_\theta h^\top \  \lambda$.
\State Obtain the linear action $\eta \mapsto G_L \eta$ following steps in~\Cref{alg:linear_action}.
\State Use the conjugate gradient method to solve for $\eta_L^{nat} $ where $G_L\,\eta_L^{nat} = -\partial_\theta f(\rho(\theta))$.
\end{algorithmic}
\end{algorithm}

One may use~\Cref{alg:Z_unavail} instead of~\Cref{alg:Z_avail} when $Z$ is available but the QR factorization of $Y = LZ$ is too costly, for instance, in some machine learning applications. Since ``wall-clock'' time can be highly affected by the implementation and the computer specification, in~\Cref{tab:propagation number}, we summarize the number of propagations per iteration among different methods~\cite{xu2020second}. For different NGDs, the cost of the linear action $\gamma \mapsto L^\top L \gamma$ varies, which we will discuss in~\Cref{subsec:general_L}.
\setlength{\tabcolsep}{6pt} % Default value: 6pt
\renewcommand{\arraystretch}{1.2} % Default value: 1
\begin{table}[!ht]
\centering
\caption{The number of propagations among different optimization methods.\label{tab:propagation number}}
\begin{tabular}{l|c|c|c}
\hline
 & GD  & NGD   & Newton's Method \\
 \hline 
Forward propagation $\theta\mapsto \rho(\theta)$                            & $1$ & $1$   & $1$             \\
 \hline 
Backward propagation $\xi \mapsto \partial_\theta \rho^\top\, \xi$            & $1$ & $1$   & $2$             \\
 \hline 
Linearized forward propagation $\omega \mapsto \partial_\theta \rho\, \omega$ & $0$ & $1^*$ & $1$   \\
\hline
 \multicolumn{4}{l}{\small $^*$For NGD, different choice of metric affects the complexity of the linearized forward solve.}
\end{tabular}
\end{table}

\subsection{Computation for natural gradient examples in~\Cref{sec:math_nat}}\label{subsec:general_L}
In~\Cref{subsec:Z_avail,subsec:Z_unavail}, we have shown how to compute the NGD direction $\eta_L^{nat}$ given $Z$ is easily available or not. Both strategies  require the matrix $L$, which depends on the particular metric space for the natural gradient. Next, we specify  the form of $L$ based on cases discussed in~\Cref{sec:math_nat}.

%\subsubsection{$L^2$ and Fisher--Rao--Hellinger natural gradient descent}
The $L^2$ case in~\Cref{subsec:L2nat_math} corresponds to $L=I$, the $k\times k$ identity matrix, while the Fisher--Rao--Hellinger natural gradient discussed in~\Cref{subsec:FSnat_math} corresponds to $L=\operatorname{diag}\left(1/\sqrt{\rho}\right) \in \R^{k\times k}$, which incurs $\mathcal{O}(k)$ more flops per iteration compared to the $L^2$ NGD method.
For the $H^s$ natural gradient discussed in~\Cref{subsec:Hsnat_math}, $L$ corresponds to proper discretization of $\bD^s$ (for $s> 0$) and $((\bD^{-s})^*)^\dagger$ (for $s<0$). Next, we give a few concrete examples. When $s=1$, $\bL = \bD^1 = [I, \nabla]^\top$ and $(\bL^*)^\dagger = \bD^1((\bD^1)^*\bD^1)^{-1} = [I, \nabla]^\top (I - \Laplace)^{-1}$. When $s=-1$, $\bL = ((\widetilde{\bD}^{-1})^*)^\dagger = [I, \nabla]^\top (I - \Laplace)^{-1}$ while $(\bL^*)^\dagger = [I, \nabla]^\top$. Similarly,  for the $\dot{H}^s$ natural gradient discussed in~\Cref{subsec:Hs_semi_nat_math}, $L$ should correspond to proper discretization of $\widetilde{\bD}^s$ (for $s> 0$) and $((\widetilde{\bD}^{-s})^*)^\dagger$ (for $s<0$). For instance, when $s=1$, $\bL = \widetilde{\bD}^1 =  \nabla$, and $(\bL^*)^\dagger = \widetilde{\bD}^1((\widetilde{\bD}^1)^*\widetilde{\bD}^1)^{-1} = \nabla (- \Laplace)^{-1}$; when $s=-1$, $\bL = ((\widetilde{\bD}^{-1})^*)^\dagger =  \nabla (- \Laplace)^{-1}$ while $(\bL^*)^\dagger = \nabla$. The symmetry between the cases of $H^s$/$\dot{H}^s$ and the cases of $H^{-s}$/$\dot{H}^{-s}$, $\forall s>0$, comes from the fact that they are dual Sobolev spaces. The computation of the natural gradient based on the $H^s$ and $\dot{H}^s$ metric can be efficiently computed. This is because there are fast algorithms for discretizing and computing the actions of the gradient and (inverse) Laplacian operators for periodic, Dirichlet and zero-Neumann boundary conditions in $\bL$ and $(\bL^*)^\dagger$~\cite{fortunato2020fast,yang2021implicit}.

Based on the unweighted reformulation~\eqref{eq:nat_grad_W2_L2}, computing the $W_2$ NGD discussed in~\Cref{subsec:W2nat_math} requires the discretization of $\bL = \bB^\dagger$. We can first discretize the differential operator $\bB$, denoted as $B$, and then compute $L = B^\dagger$, which can be used no matter the Jacobian $Z = \partial_\theta \rho$ is explicitly given or implicitly provided through the constraint~\eqref{eq:h_constraint}.
As an example, we describe how to obtain the matrix $L$ for the WNGD~\eqref{eq:nat_grad_W2_L2} in~\Cref{sec:WNGD} based on a finite-difference discretization of the differential operator. In~\Cref{rmk:Hm1_vs_W2}, we commented that when $\rho(x)$ is constant, WNGD reduces to $\dot{H}^{-1}$-based NGD. However, in general, the computation of the WNGD is more expensive than the $H^s$/$\dot{H}^s$ cases for two reasons. First, the information matrix $G$ and the operator $\bL$ for the WNGD are $\rho$-dependent, so in every iteration of the NGD method, one has to re-compute them, which incurs extra complexity. Second, as mentioned above, the computation of $H^s$/$\dot{H}^s$ NGD can be done through fast Fourier, or discrete cosine transforms (depending on the domain). It is, however, inapplicable to the Wasserstein case since it involves solving a \textit{weighted} differential equation. In~\Cref{sec:WNGD},  we use QR factorization to obtain $L = B^\dagger$ given $B$. We approximate $B$ using the finite-difference method, so $B^\top$ is very sparse. Using a multifrontal multithreaded sparse QR factoriazation~\cite{davis2011algorithm}, it has much better complexity than the conventional $\mathcal{O}(k^3)$. We summarize the observed computational costs of obtaining $L$ and $(L^\top)^{\dagger}$ for different NGD methods in~\Cref{tab:complexity_L}. See also~\Cref{fig:L and LL time} for the computational time comparison among different metrics.

After obtaining $L$ and $(L^\top)^{\dagger}$, the QR factorization of $Y = LZ$ followed by computing the natural gradient direction $\eta^{nat}_L$ based on~\eqref{eq:sol_Z_avail} will incur $\mathcal{O}(kp^2)$ flops if the Jacobian $Z$ is available; see~\Cref{fig:NGD time} for an observed computational time to obtain the NGD $\eta$ among different metrics for a case where $Z$ is analytically available (see Section~\ref{subsec:GaussianMixture}). When $Z$ is not analytic, such as from PDE (Section~\ref{subsec:FWI}) or neural network models (Section~\ref{sec:PINN}), we will see that the cost in computing NGDs among different methods is no longer dominated by the cost of computing $L$ and  $(L^\top)^{\dagger}$.
%When $Z$ is implicitly given, each linear action given in~\eqref{eq:G_L action} requires solving two linear systems determined by the implicit constraint~\eqref{eq:h_constraint}. The computational complexity will be problem-dependent. The convergence speed for the iterative Krylov subspace method applied to~\eqref{eq:G_L action} depends on the spectral property of both $L$ and the Jacobian $Z$.

\setlength{\tabcolsep}{8pt} % Default value: 6pt
\renewcommand{\arraystretch}{1.2} % Default value: 1
\begin{table}[!ht]
\centering
\caption{Summary of the observed computational costs for linear actions $L$ and $(L^\top)^{\dagger}$ in~\eqref{eq:unified}.\label{tab:complexity_L}}
\begin{tabular}{cccccc}
\hline
 & $L^2$  & Fisher--Rao & $H^s$/$\dot{H}^s$, $s>0$ &   $H^s$/$\dot{H}^s$, $s<0$  & $W_2$ \\
 \hline
change over iteration &\xmark & \cmark &  \xmark & \xmark &\cmark \\
computing $v\mapsto Lv$ & $\mathcal{O}(k)$ & $\mathcal{O}(k)$ &   $\mathcal{O}(k)$ & $\mathcal{O}(k\log k)$ &  $\mathcal{O}(k^{1.25})$ \\
computing $v\mapsto (L^\top)^{\dagger}v$ & $\mathcal{O}(k)$ & $\mathcal{O}(k)$ & $\mathcal{O}(k\log k)$ & $\mathcal{O}(k)$ & $\mathcal{O}(k)$ \\
\hline
\end{tabular}
\end{table}

\begin{figure}
    \centering
    \subfloat[Evaluate $L$ and $(L^\top)^\dagger$ linear actions]{\includegraphics[width = 0.45\textwidth]{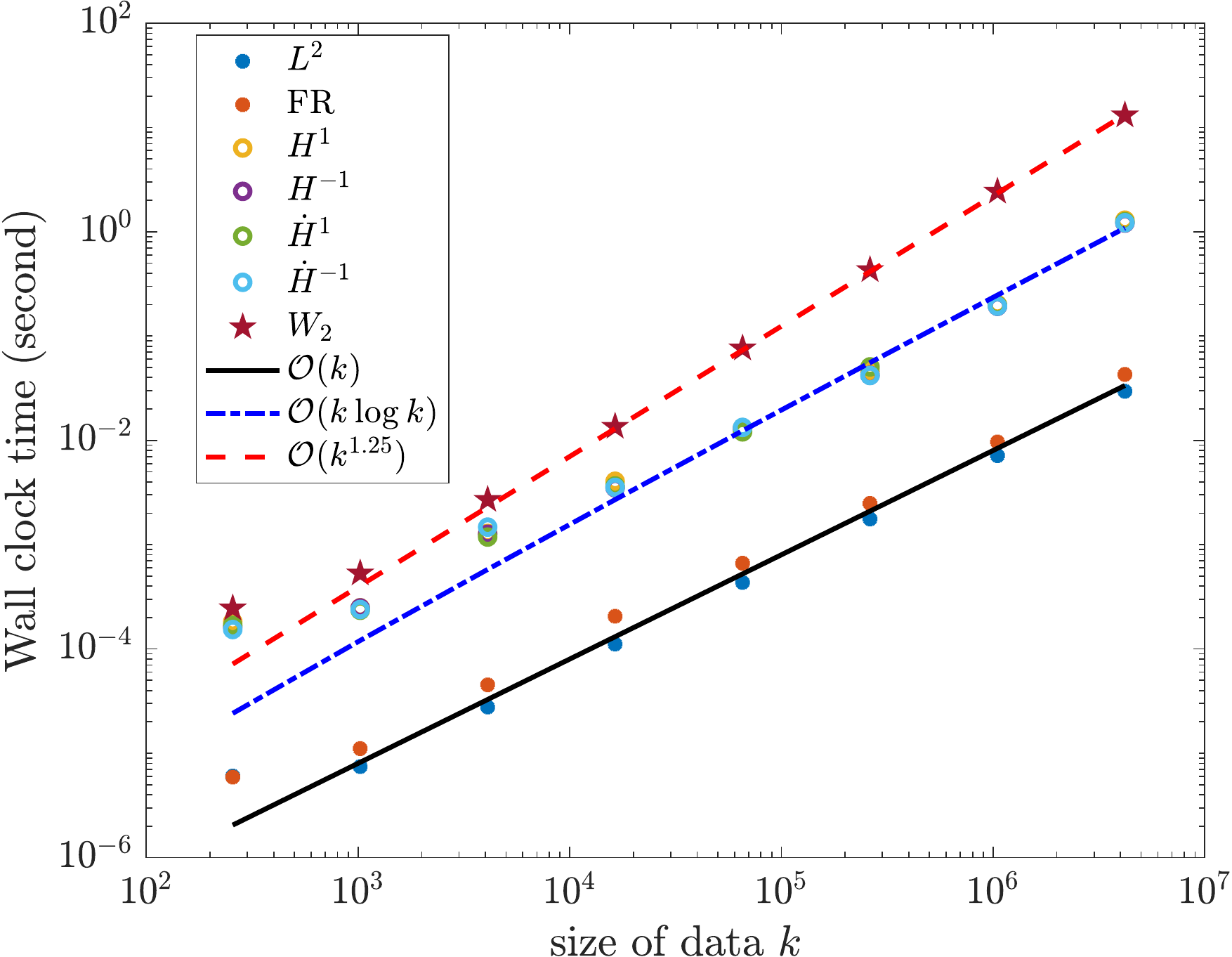} \label{fig:L and LL time}}
   \subfloat[Compute NGD direction $\eta$]{\includegraphics[width = 0.45\textwidth]{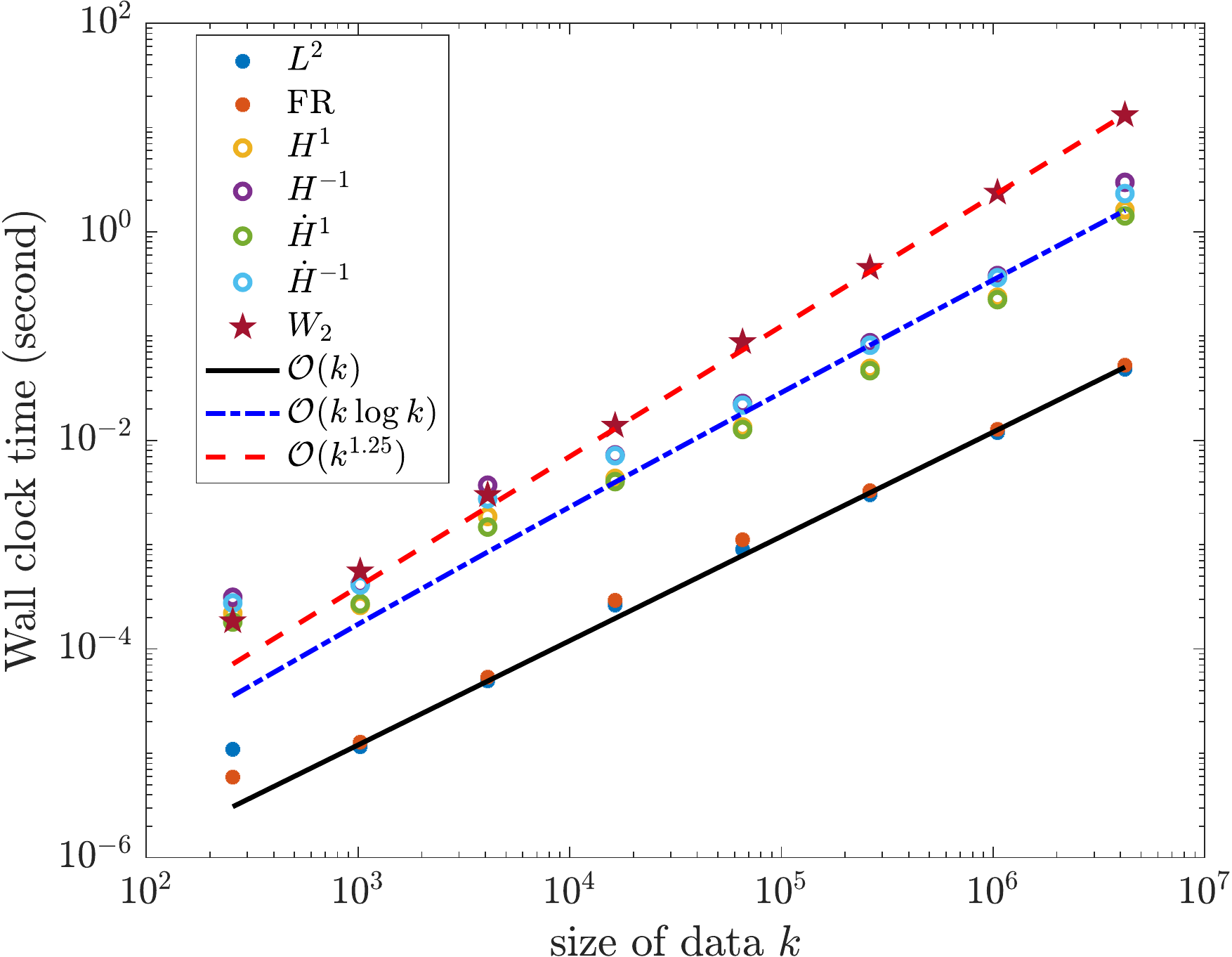} \label{fig:NGD time}}
   \caption{The observed wall clock time for evaluating $v\mapsto Lv$ and $v\mapsto (L^\top)^\dagger v$ linear actions (left) and for computing one NGD direction $\eta$ with a fixed $p$ (right) based on different metrics.\label{fig:NGD one run time}}
\end{figure}

\subsection{Extensions and variants}
In this section, we briefly comment on several practical variants of using the NGD method based on a particular choice of the data metric space.

\subsubsection{A damped information matrix}\label{subsec:damped}
If the discretized information matrix $G_L$ is rank deficient or ill-conditioned, one may consider rank-revealing QR factorization; see~\Cref{sec:qr_low_rank}. As an alternative approach, a damped information matrix in the form $G_\lambda = \lambda I+G_L$ is often used for numerical stability and to avoid extreme updates, where $\lambda$ is the damping parameter. One notable example is the Levenberg--Marquardt method as a damped Gauss--Newton method~\cite{schraudolph2002fast}, while the latter is 
equivalent to the $L^2$ NGD in our framework; see~\Cref{sec:L2_GN}.

Since the fundamental difference between GD and NGD lies in how one measures the distance between the potential next iterate and the current iterate, the damped version corresponds to choosing the next iterate based on a mixed metric from $\theta$-domain and $\rho$-domain. Indeed, in the implicit form~\cref{eq:nat_proximal,eq:std_proximal}, the damped version can be written as
\begin{equation}\label{eq:damped_proximal}
    \theta^{l+1} = \argmin_{\theta}   \bigg\{ f(\rho(\theta)) + \frac{ \lambda\, d_\theta( \theta,\theta^{l} )^2 +d_\rho( \rho(\theta) , \rho(\theta^{l})) ^2    }{2\tau} \bigg\}.
\end{equation}
When $d_\theta$ is the Euclidean metric on $\theta$-domain, we obtain the identity matrix $I$ in $G_\lambda$, but other choices of damping metric can also be considered. 

Alternatively, one can use another $\rho$-space metric to regularize instead of any metric on the $\theta$-space.
For example, let $d_{\rho_2}$ be the main natural gradient metric and $d_{\rho_1}$ be the regularizing natural gradient metric. The next iterate  obtained in the implicit Euler scheme is given by
\begin{equation}\label{eq:damped_proximal_2}
    \theta^{l+1} = \argmin_{\theta}   \bigg\{ f(\rho(\theta)) + \frac{ \lambda\, d_{\rho_1}( \rho(\theta) , \rho(\theta^{l})) ^2  +d_{\rho_2}( \rho(\theta) , \rho(\theta^{l})) ^2    }{2\tau} \bigg\},
\end{equation}
while the damping parameter $\lambda$ determines the strength of regularization. We comment that the $H^1$ natural gradient can be seen as the $\dot{H}^1$ natural gradient damped by the $L^2$ natural gradient.

\subsubsection{Mini-batch NGD}\label{subsubsec:minibatch}

Similar to mini-batch GD, one can also use mini-batch NGD by  computing the \textit{natural} gradient of the objective function with respect to a subset of the data $\rho$. Consider a random sketching matrix $S\in \R^{k'\times k}$,  $k' < k$. Each row of $S$ has at most one nonzero entry $1$. Thus, $S\rho \in \R^{k'}$ is the mini-batch data. The objective function also becomes $f(S\rho(\theta))$.

The mini-batch NGD can find the next iterate $ \theta^{l+1}$ implicitly through
\[
    \theta^{l+1} = \argmin_{\theta}   \bigg\{ f({ S} \rho(\theta)) + \frac{ d_\rho( {S}\rho(\theta) , {S} \rho(\theta^{l}) )^2    }{2\tau} \bigg\},
\]
where $d_\rho$ is the $\rho$-space metric. It is equivalent to changing the data metric from $d_\rho( \boldsymbol{\cdot} , \boldsymbol{\cdot} )$ to a random pseudo metric $d_\rho( { S} \boldsymbol{\cdot}  , { S} \boldsymbol{\cdot} )$. The information matrix and the NGD direction are
\[
G =   Z^\top {S} ^\top L^\top   L {S}  Z ,\qquad \eta = G^{-1} \partial_\theta f(S\rho(\theta)) ,
\]
where $L$ depends on $d_\rho( { S} \boldsymbol{\cdot}  , { S} \boldsymbol{\cdot} )$ and $Z$ is the Jacobian. Note that $S$ changes over iterations.

Also, we remark that $SZ \in \R^{k'\times p}$ can be seen as a random sketching of the Jacobian matrix $Z$. If $Z$ is low-rank, the column space of $SZ \in \R^{k'\times p}$ can be a close approximation to the column space of $Z$, but $SZ$ is much smaller in size. 
See~\Cref{sec:column_Z} where similar techniques from random linear algebra can help explore the column space of $Z$ and further reduce the computational cost.

% \begin{figure}
% \centering
% % \subfloat[Three different convergence paths]{\includegraphics[width = 0.49\textwidth]{2D_2Mixture_2para/mu1stdnatl2-together.png}\label{fig:mu1_two_mixture_all}}
% \subfloat[Standard gradient descent]{\includegraphics[width = 0.30\textwidth]{}\label{fig:mu1_two_mixture_std}}
% \subfloat[$L^2$ natural gradient descent]{\includegraphics[width = 0.30\textwidth]{}\label{fig:mu1_two_mixture_l2}}
% \subfloat[$W_2$ natural gradient descent]{\includegraphics[width = 0.30\textwidth]{}
% \label{fig:mu1_two_mixture_w2}}
% %\\
% % \subfloat[Standard gradient descent]{\includegraphics[width = 0.33\textwidth]{2D_2Mixture_2para/Vector-Field/mu2-stdGDflow-stdtrace00.eps}\label{fig:mu2_two_mixture_std}}
% % \subfloat[$L^2$ natural gradient descent]{\includegraphics[width = 0.33\textwidth]{2D_2Mixture_2para/Vector-Field/mu2-l2GDflow-l2trace00.eps}\label{fig:mu2_two_mixture_l2}}
% % \subfloat[$W_2$ natural gradient descent]{\includegraphics[width = 0.33\textwidth]{2D_2Mixture_2para/Vector-Field/mu2-w2GDflow-w2trace00.eps}
% % \label{fig:mu2_two_mixture_w2}}
% \caption{Gaussian mixture model setting one: level sets, vector fields and convergent paths for inverting $\mu_1$ starting from $(4.2,4)$ using the standard, $L^2$ and $W_2$ NGDs.}
%     \label{fig:two_mixture_setting_one}
% \end{figure}

\section{Numerical results} \label{sec:numerics}
In this section, we present three optimization examples to illustrate the effectiveness of our computational strategies for NGD methods. We first present the parameter reconstruction of a Gaussian mixture model where the Jacobian $\partial_\theta \rho$ is analytically given. 
Our second example is to solve the 2D Poisson equation using the physics-informed neural networks (PINN)~\cite{raissi2019physics}, where the Jacobian $\partial_\theta \rho$ can be numerically obtained through automatic differentiation. We then present a large-scale waveform inversion, a PDE-constrained optimization problem where the Jacobian $\partial_\theta \rho$ is not explicitly given. Using our computational strategy proposed in~\Cref{subsec:Z_unavail}, we can efficiently implement the NGD method based on a general metric space. The first example shows that various (N)GD methods converge to different stationary points of a nonconvex objective function. The last two tests illustrate that different (N)GD methods have various convergence rates. Both phenomena are interesting as they indicate that one may achieve global convergence or faster convergence by choosing a proper metric space $(\M,g)$ that fits the problem.

\begin{figure}
\centering
\subfloat[GD]{\includegraphics[width = 0.166\textwidth]{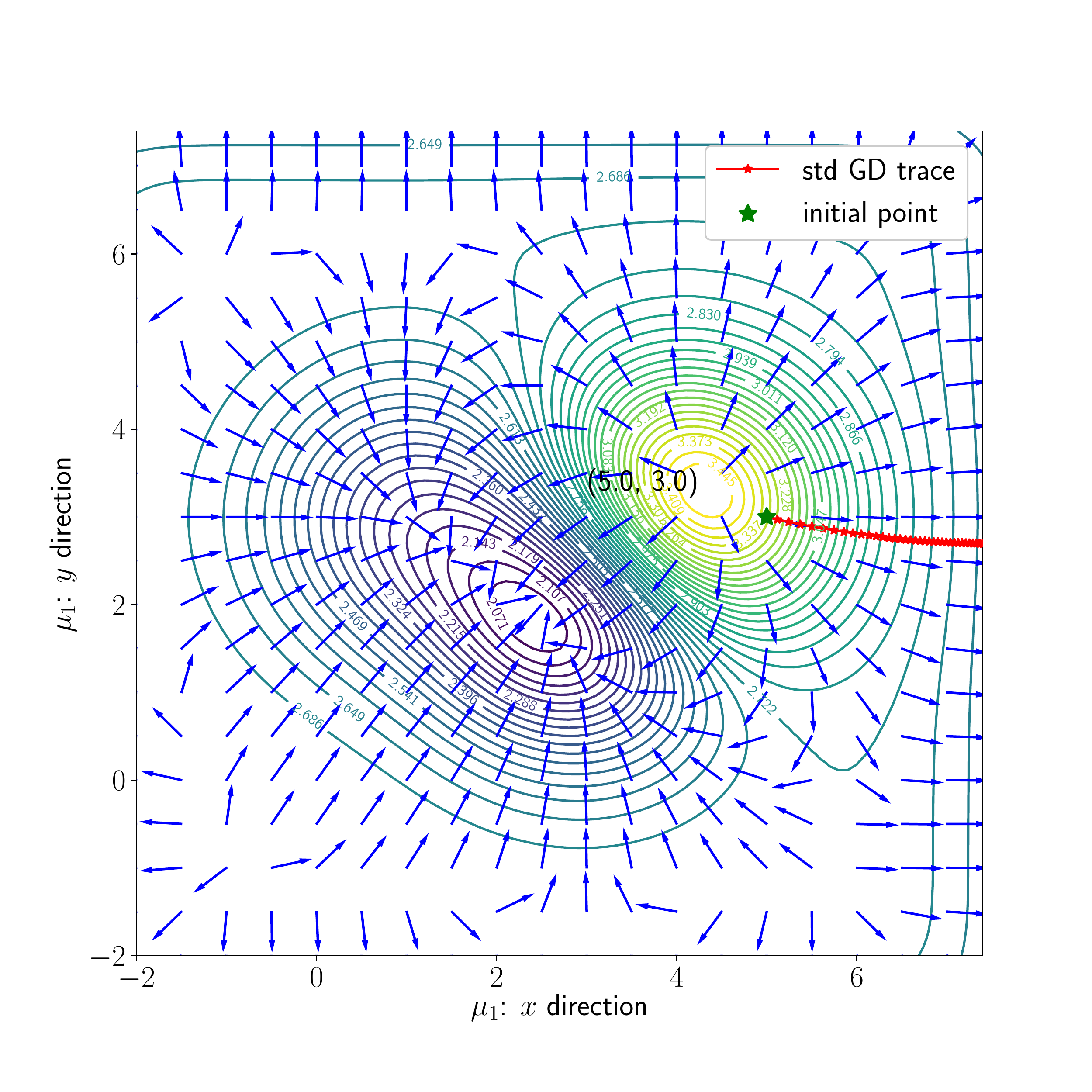}\label{fig:mu1_two_mixture_std_init1}}
\subfloat[$L^2$ NGD]{\includegraphics[width = 0.166\textwidth]{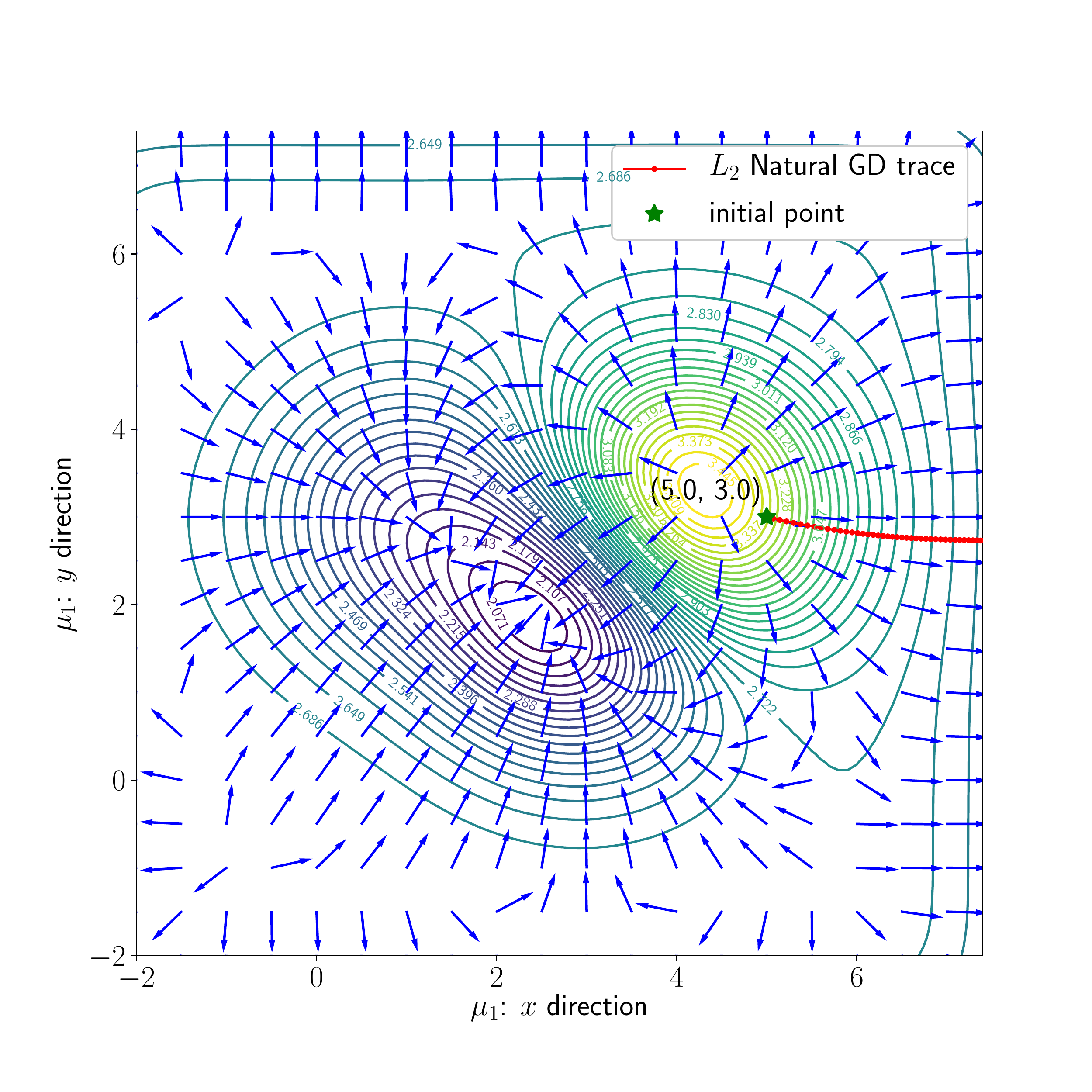}\label{fig:mu1_two_mixture_l2_init1}}
\subfloat[FR NGD]{\includegraphics[width = 0.166\textwidth]{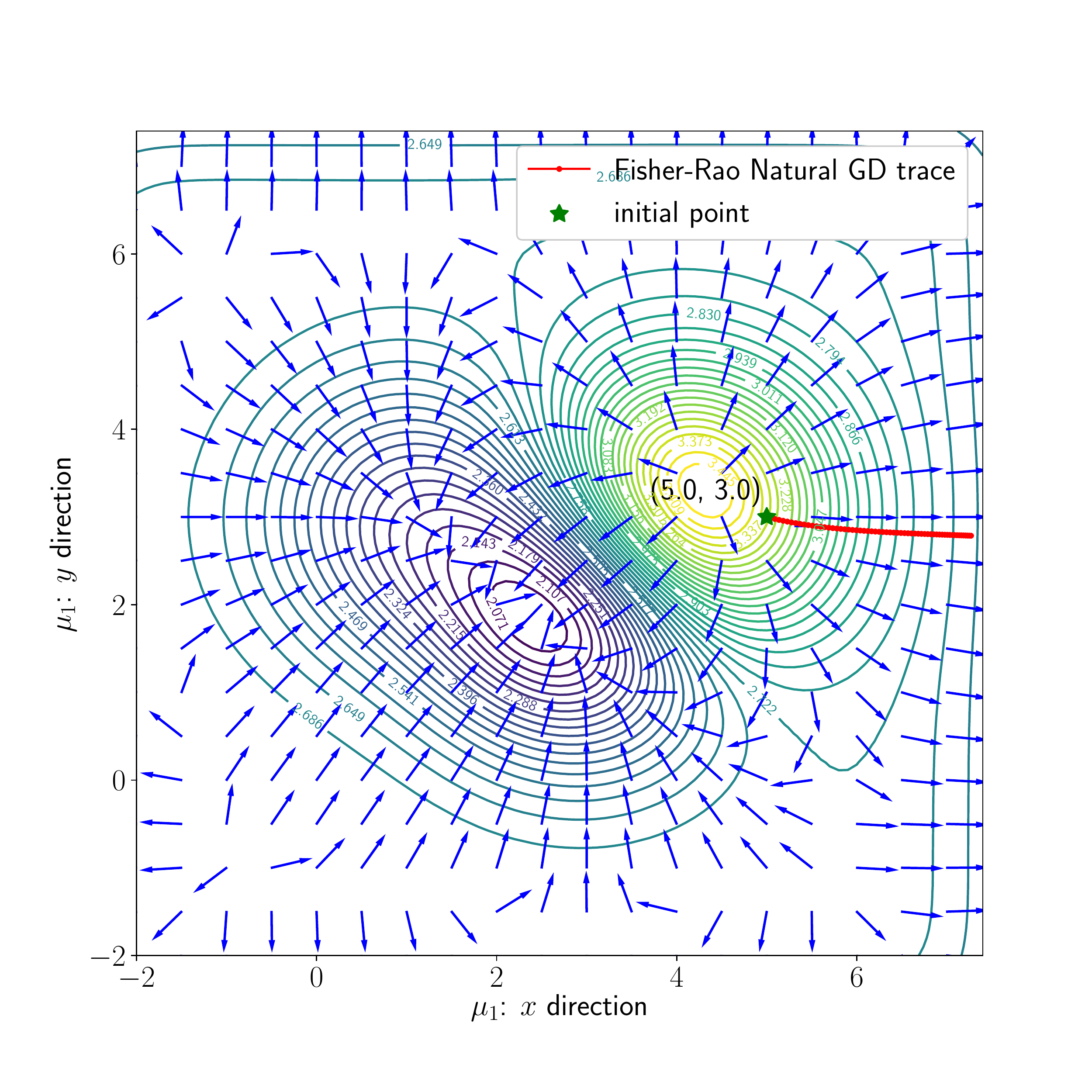}\label{fig:mu1_two_mixture_FR_init1}}
\subfloat[$H^1$ NGD]{\includegraphics[width = 0.166\textwidth]{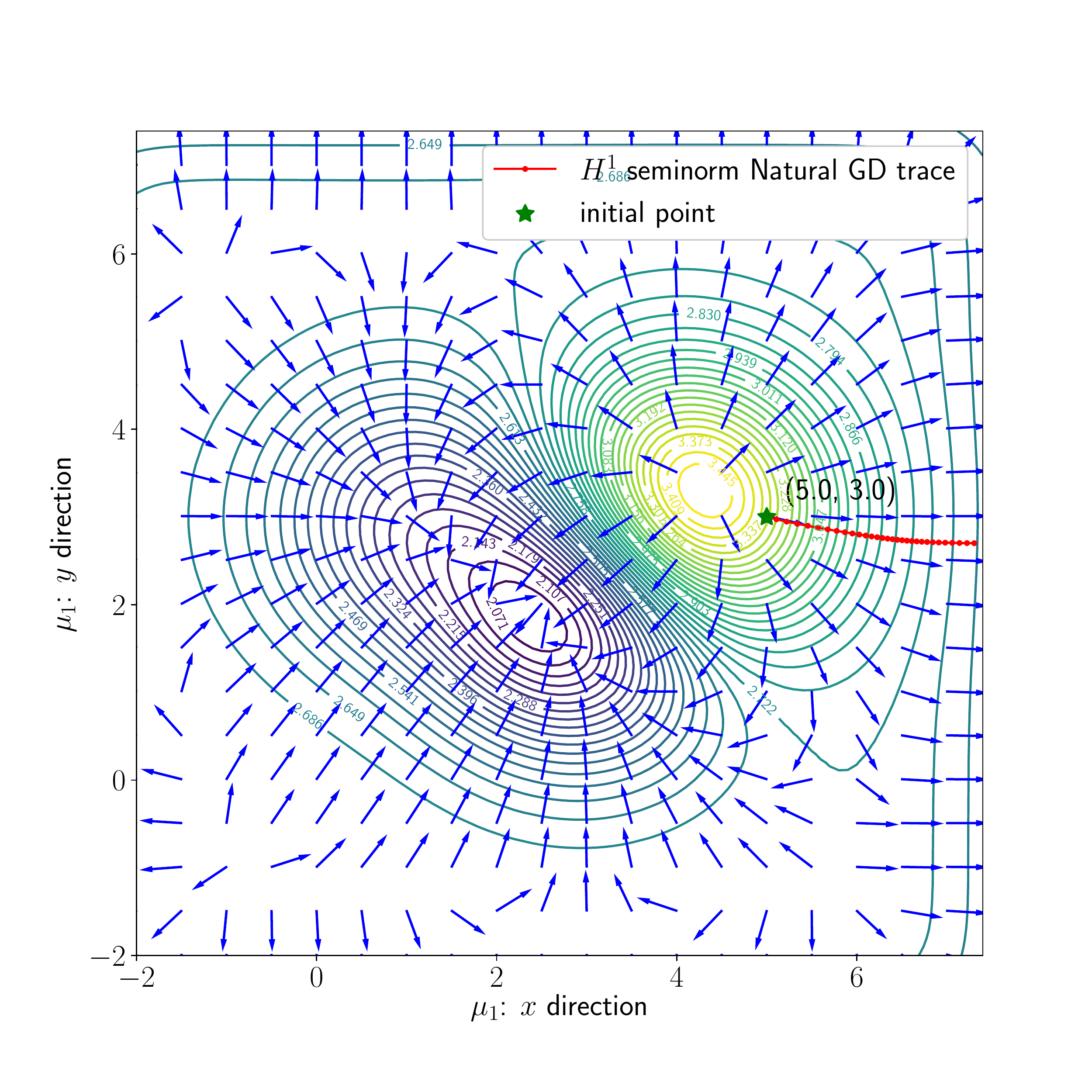}\label{fig:mu1_two_mixture_H1_init1}}
\subfloat[$H^{-1}$ NGD]{\includegraphics[width = 0.166\textwidth]{Second_Setting/H1-mu1-d2-43-lrd2-30-ini53-N71S10Covd6-cen2d25VFtog-eps-converted-to.pdf}\label{fig:mu1_two_mixture_Hinv1_init1}}
\subfloat[$W_2$ NGD]{\includegraphics[width = 0.166\textwidth]{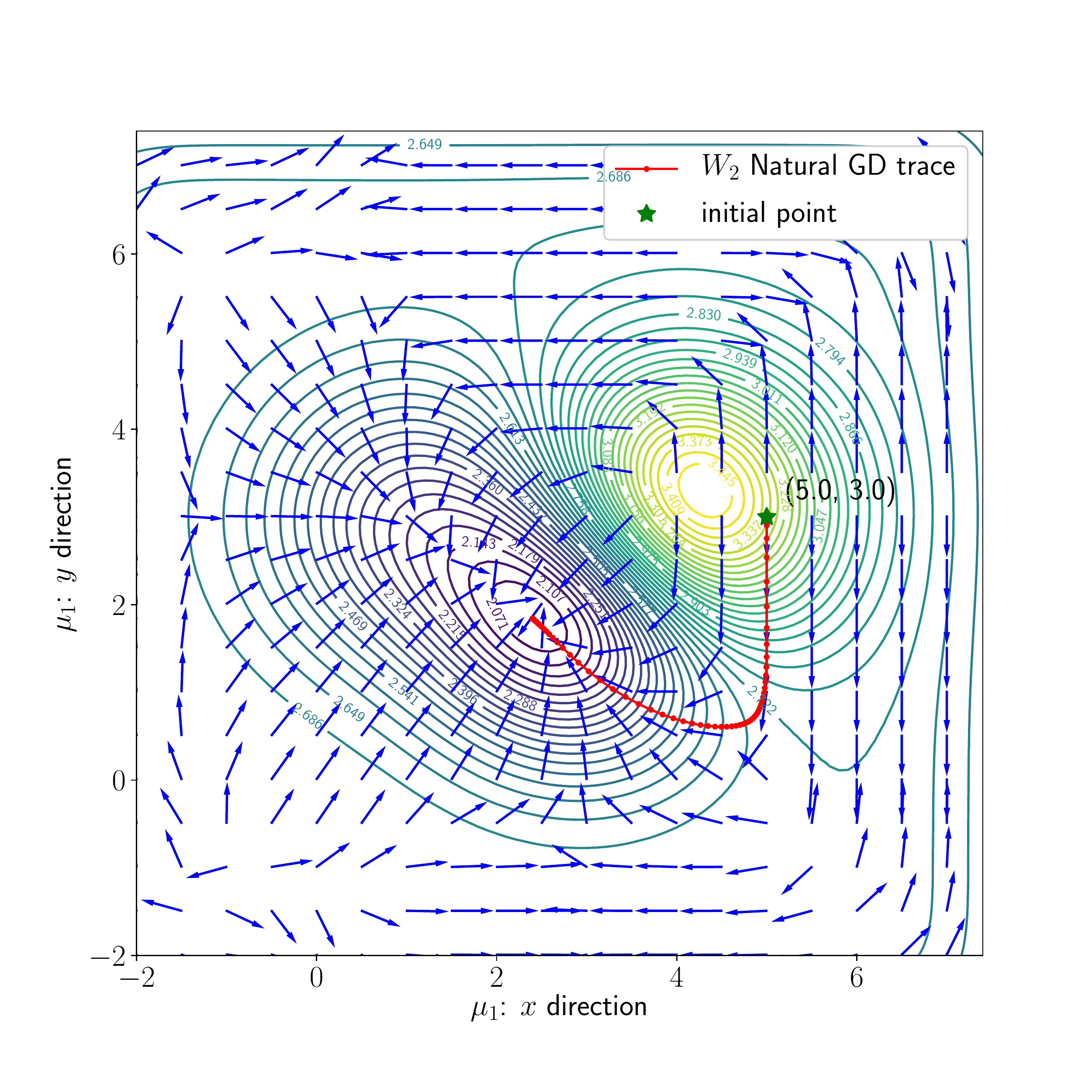}\label{fig:mu1_two_mixture_w2_init1}}
\caption{Gaussian mixture example: level sets, vector fields and convergent paths using GD and different NGD methods to invert $\mu_1$. All algorithms start from initial guess $(5,3)$. }
    \label{fig:two_mixture_setting_two}
\end{figure}

\subsection{Gaussian mixture model}\label{subsec:GaussianMixture}
Consider the Gaussian mixture model, which assumes that all the data points are generated from a mixture of a finite number of normal distributions with unknown parameters. Consider a probability density function $\rho(x;\theta):\R^d\mapsto \R^+$ where
\[ \rho(x;\theta) = w_1 \mathcal{N}(x;\mu_1,\Sigma_1) + \ldots + w_i \mathcal{N}(x;\mu_i,\Sigma_i) +\ldots+ w_k \mathcal{N}(x;\mu_k,\Sigma_k). \]
The $i$-th Gaussian, denoted as $\mathcal{N}(x;\mu_i,\Sigma_i)$ with the mean vector $\mu_i\in\R^d$ and the covariance matrix $\Sigma_i\in \R^{d\times d}$, has a weight factor $w_i\geq 0$. Note that $\sum_i w_i = 1$. Here, $\theta$ could represent parameters such as $\{w_i\}$, $\{\mu_i\}$ and $\{\Sigma_i\}$. We formulate the inverse problem of finding the parameters as a data-fitting problem by minimizing the least-squares loss $f(\rho(\theta))$ on a compact domain $\Omega$ where the objective function follows~\eqref{eq:least-squares-obj}. Here, $\rho^*$ is the observed reference density function. Note that the dependence between the state variable $\rho$ and the parameter $\theta$ is explicit here. Thus, we can compute the Jacobian $\partial_\theta \rho$ analytically, and the numerical scheme follows~\Cref{subsec:Z_avail}.

We consider reference $\rho^*(x) = 0.3\mathcal{N}(x; (1, 3), 0.6 I) + 0.7 \mathcal{N} (x; (3, 2), 0.6I )$ and the domain $\Omega = [-2.75, 7.25]^2$. We fix $\mu_2$ and the weights to be incorrect and invert $\theta = \mu_1$. That is, $\rho(x; \theta) = 0.2 \mathcal{N} (x; \theta, 0.6I) + 0.8 \mathcal{N} (x; (4, 3), 0.6I)$. \Cref{fig:two_mixture_setting_two} shows the convergence paths of GD and  $L^2$, Fisher--Rao, $H^1$, $H^{-1}$, $W_2$ NGD methods under the initial guess $(5,3)$, which is chosen since it belongs to different basins of attractions for different optimization methods. We choose the largest possible step size such that the objective function monotonically decays. They are $0.3$, $0.04$, $0.8$, $0.2$, $0.2$ and $3$ for methods in~\Cref{fig:two_mixture_setting_two} from left to right. WNGD converges to the global minimum while all other methods converge to local minima by taking different convergence paths.

\begin{figure}
\centering
\subfloat[Standard gradient descent]{\includegraphics[width = 0.33\textwidth]{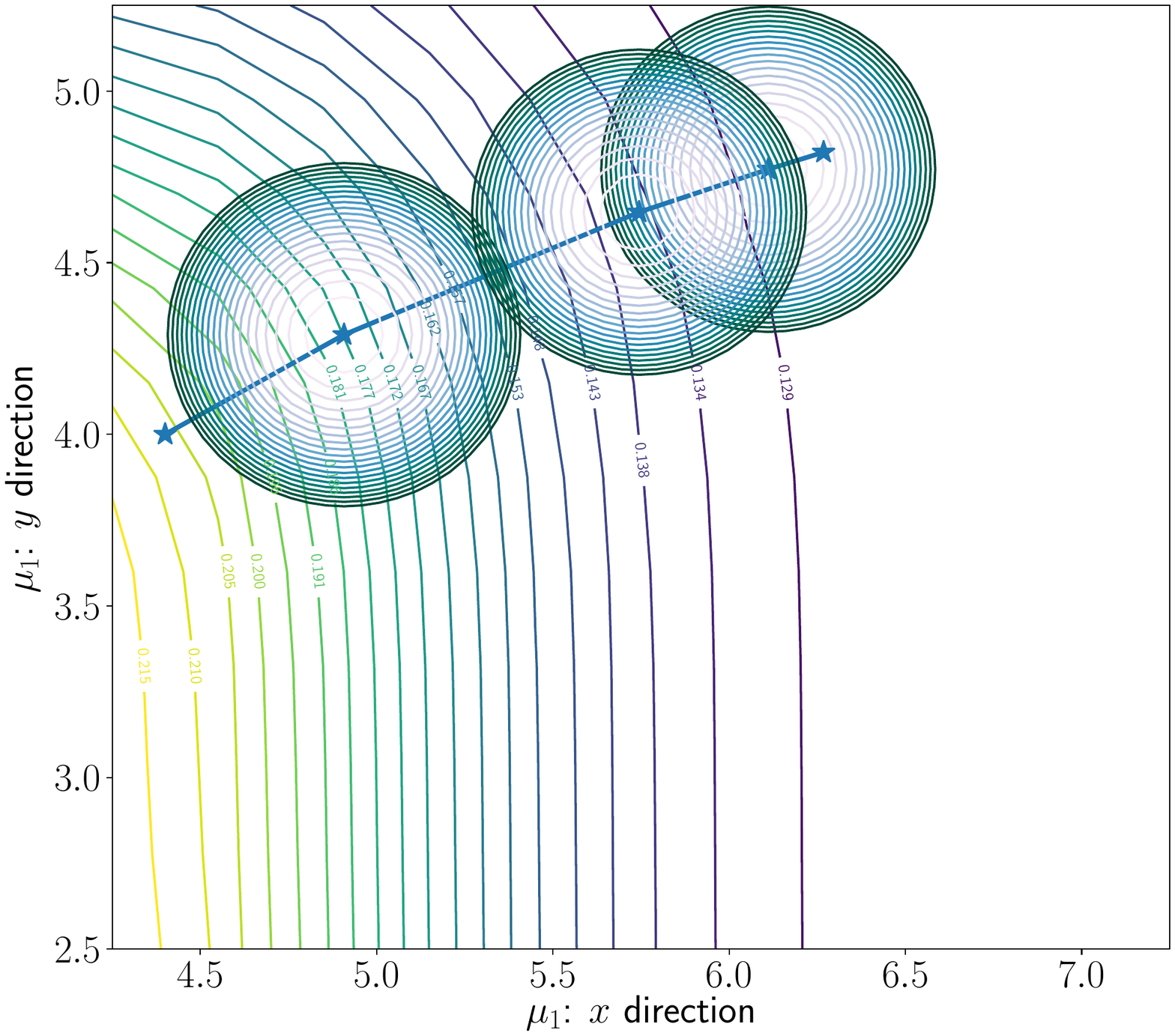}\label{fig:mu1_two_mixture_std_vecfields}}
\subfloat[$L^2$ natural gradient]{\includegraphics[width = 0.33\textwidth]{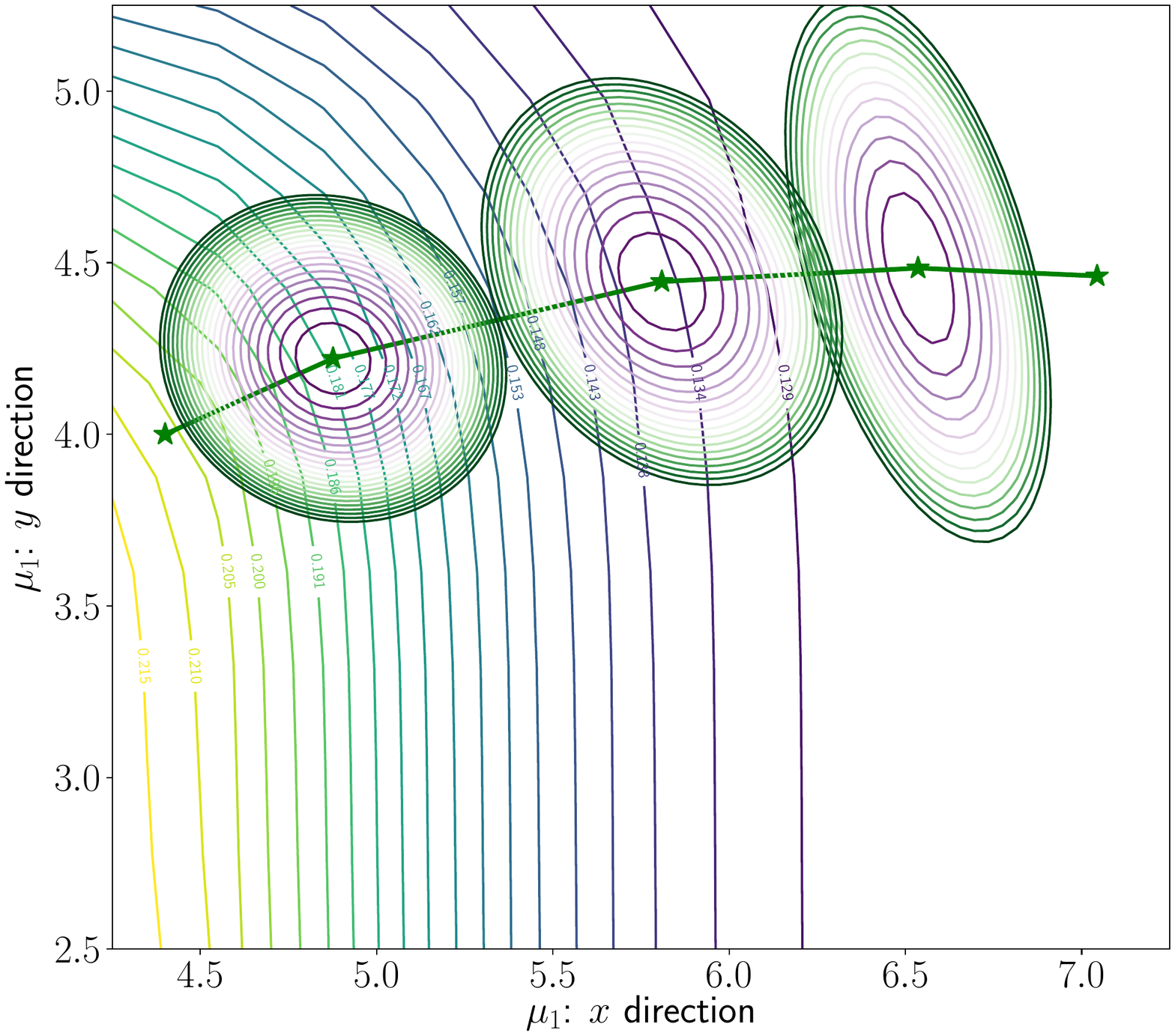}\label{fig:mu1_two_mixture_l2_vecfield}}
\subfloat[$W_2$ natural gradient]{\includegraphics[width = 0.33\textwidth]{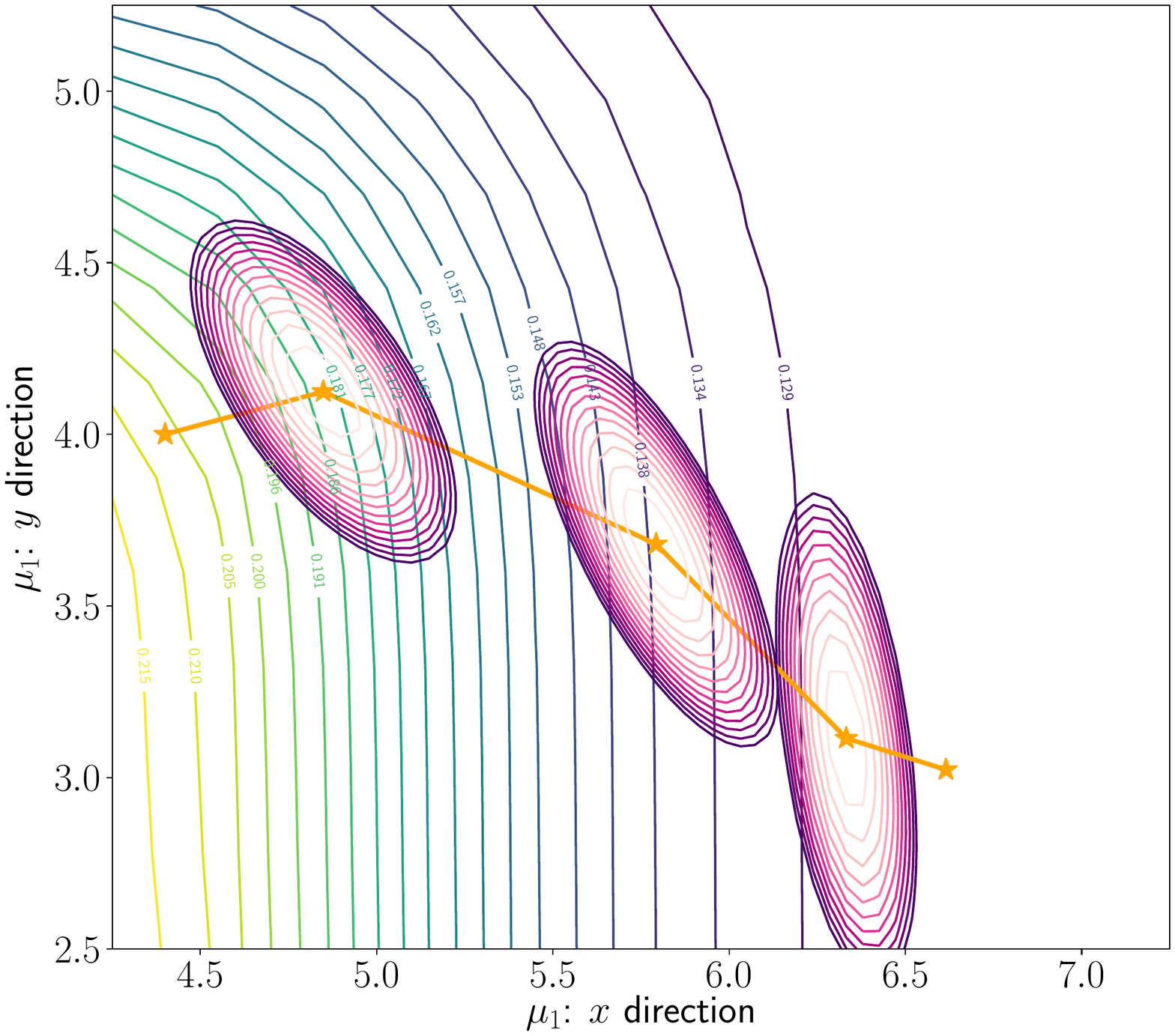}
\label{fig:mu1_two_mixture_w2_vecfield}}
\caption{The local quadratic models of GD, $L^2$ NGD and $W_2$ NGD in the first several iterations.}
\label{fig:mu1_two_mixture_local_quad}
\end{figure}

% \begin{figure}
% \centering
% \subfloat[$L^2$, $\theta^{0} =(5,3)$]{\includegraphics[width = 0.25\textwidth]{}\label{fig:mu1_two_mixture_l2_init1}}
% \subfloat[$L^2$, $\theta^{0} = (4.5,3.5)$]{\includegraphics[width = 0.25\textwidth]{}\label{fig:mu1_two_mixture_l2_init2}}
% \subfloat[$W_2$, $\theta^{0} = (5,3)$]{\includegraphics[width = 0.25\textwidth]{Figure/w2mu1-d2-43-lr3-100-ini53-N71S10Covd6-cen2d25VFtog.pdf}\label{fig:mu1_two_mixture_w2_init1}}
% \subfloat[$W_2$, $\theta^{0} = (4.5,3.5)$ ]{\includegraphics[width = 0.25\textwidth]{}\label{fig:mu1_two_mixture_w2_init2}}
% \caption{Gaussian mixture example: level sets, vector fields and convergent paths using the $L^2$ and $W_2$ NGD methods to invert $\mu_1$. Here, (a) and (c) start from initial guess $(5,3)$ while (b) and (d) start from $(4.5,3.5)$. Other parameters are fixed to be incorrect.}
%     \label{fig:two_mixture_setting_two}
% \end{figure}

We aim to gain better understanding regarding their different convergence behaviors. Given a fixed $l$-th iterate, different algorithms find the $(l+1)$-th iterate, but based on different ``principles'' nicely revealed in the proximal operators~\eqref{eq:nat_proximal} and~\eqref{eq:std_proximal}. Here, we use $\theta_{\text{std}}^{l+1} $, $  \theta_{W_2}^{l+1}$, and $\theta_{L^2}^{l+1}$ to denote the next iterates based on GD, $L^2$ NGD and WNGD, respectively. We then have
\begin{align*}
\theta_{\text{std}}^{l+1} &=% \theta^{l} +  \tau \argmin\limits_{\{h:\|h\|_2 = 1\}} \bigg\{ f(\rho(\theta^{l})) + \nabla_\theta f ^\top h \bigg\}\quad (\text{Gradient Descent})\\
     \theta^l+\argmin_{h} \bigg\{  \nabla_\theta f^\top h + \frac{1}{2\tau} h^\top  h \bigg\} \approx \argmin_{\theta} \bigg\{ f(\rho(\theta)) + \frac{  { |\theta-\theta^l|^2}}{2\tau} \bigg\},\\
\theta_{L^2}^{l+1} &=  \theta^{l} +    \argmin\limits_h\bigg\{  \nabla_\theta f^\top h + \frac{1}{2\tau} h^\top  {\partial_\theta \rho ^\top \partial_\theta \rho }~h \bigg\}  \approx \argmin\limits_\theta \bigg\{ f(\rho(\theta)) + \frac{ { ||\rho(\theta)-\rho(\theta^{l}) ||_2^2 } }{2\tau} \bigg\} , \\
  \theta_{W_2}^{l+1}&=  
%   &\approx\theta^{l} +     \argmin\limits_h\bigg\{ f(\rho(\theta^{l})) + \nabla_\theta f^\top h + \frac{1}{2\tau} h^\top { Y^\top Y } h \bigg\} \nonumber \\
  \theta^{l} +   \argmin\limits_h\bigg\{  \nabla_\theta f^\top h + \frac{1}{2\tau} h^\top {  (B^\dagger \partial_\theta \rho) ^\top B^\dagger \partial_\theta \rho  }\ h \bigg\} \approx \argmin\limits_\theta \bigg\{ f(\rho(\theta)) + \frac{  { W_2^2(\rho(\theta) ,\rho(\theta^{l}) ) }}{2\tau} \bigg\} .
%     &=\theta^{l} +   \argmin\limits_h\bigg\{ f(\rho(\theta^{l})) + \nabla_\theta f^\top h + \frac{1}{2\tau } h^\top  \blue{\sum_{i=1}^d \left( \nabla_\theta f_i \nabla_\theta f_i ^\top \right)} h \bigg\} \quad (\text{Gauss-Newton})\\
%   \theta^{l+1} &=\theta^{l} +  \argmin\limits_h\bigg\{ f(\rho(\theta^{l})) + \nabla_\theta f^\top h + \frac{1}{2 \tau } h^\top  \blue{\sum_{i=1}^d \left( \nabla_\theta f_i \nabla_\theta f_i ^\top +  f_i \nabla^2_\theta f_i  \right) } h \bigg\} \quad (\text{Newton})
\end{align*}
The above equations show that, locally, different (N)GD methods solve different quadratic problems given the same step size $\tau$. In~\Cref{fig:mu1_two_mixture_local_quad}, we illustrate the level set of each quadratic problem for which the minimum is selected as the next iterate. The level set of the same objective function $f(\rho(\theta))$ is shown in the background. Our observation aligns with the example in~\cite[Fig.~3]{chen2020optimal}.

\subsection{Physics informed neural networks}\label{sec:PINN}
Physics-informed neural networks (PINN) is a variational approach to solve PDEs with the solution parameterized by neural networks~\cite{raissi2019physics}. Here, as an example, we use PINN to solve the 2D Poisson equation on the domain $\Omega = [-1,1]^2$,
\[
        -\Laplace u = \phi, \quad \text{with } u =  \psi \,\, \text{on } \partial \Omega,
\]
where $\phi(x) =  2\pi^2 \sin(\pi x_1) \sin(\pi x_2) + 18\pi^2 \sin(3\pi x_1) \sin(3\pi x_2)$ and $\psi(x) = 3$,  whose solution is $u(x) = \sin(\pi x_1) \sin(\pi x_2) + \sin(3\pi x_1) \sin(3\pi x_2)+ 3$, $x = [x_1,x_2]^\top$. The training loss function is
\[
f(\rho(\theta)) =   \frac{\gamma}{N_1}\sum_{i=1}^{N_1} |  \Laplace \rho(x_i,\theta) + \phi(x_i)|^2  +  \frac{2-\gamma}{N_2} \sum_{j=1}^{N_2} |\rho(x_j,\theta) -\psi(x_j)|^2,
\]
where $\rho(x,\theta)$ is a feed-forward neural network of shape $(2, 20, 30, 20, 1)$ with the hyperbolic tangent \texttt{tanh} as the activation function.  The parameters are the weights and biases, denoted by $\theta$. We use $N_1 = 2304$ collocation points in the domain interior and $N_2 = 196$ points on $\partial \Omega$, both equally spaced. We set $\gamma = 0.01$ to balance the two terms in the loss function. For a weight matrix of size  $d_1$-by-$d_2$, we initialize its entries i.i.d.~following the normal distribution $\mathcal{N}(0, \frac{2}{d_1 + d_2})$.  All biases are initialized as zero, except the one in the last layer, which is set to be $3$. 
We fix the random seed to ensure the same initialization for all optimization algorithms of interests. %The step size in each algorithm is chosen through back-tracking line search under the same setup.

We train PINN using GD and different NGDs based on metrics discussed in~\Cref{sec:math_nat}. We use back-tracking line search to select the step size (learning rate) in (N)GD algorithms. The true solution is shown in~\Cref{fig:PINN u}, while~\Cref{fig:PINN loss vs iter,fig:PINN loss vs time} show the loss value decay with respect to the number of iterations and the wall clock time, respectively. We can see that all NGD methods are faster than GD, while $H^1$ and $\dot{H}^1$-based NGDs yield the fastest convergence in both comparisons. Neural networks can suffer from slow convergence on the high-frequency parts of the residual due to its intrinsic low-frequency bias~\cite{yu2022quadrature}. The $H^1$/$\dot{H}^1$-based NGDs enforce extra weights on the oscillatory components of the Jacobian, giving faster convergence than $L^2$ NGD. In contrast, $H^{-1}$/$\dot{H}^{-1}$ NGDs bias towards the smooth components of the Jacobian, which delay the convergence of high-frequency residuals and thus the overall convergence. As discussed in~\Cref{rmk:cost_discuss}, WNGD requires a $\rho$-dependent matrix $L$, which increases the wall clock time per iteration. Interestingly, when the loss value becomes small, WNGD has a faster decay rate than $H^{-1}/\dot{H}^{-1}$ NGDs despite being asymptotically equivalent in spectral properties (see~\Cref{rmk:Hm1_vs_W2}), demonstrating the potential benefits of having a state-dependent information matrix $G(\theta)$.

\begin{figure}
\centering
\subfloat[True solution]{\includegraphics[width = 0.33\textwidth]{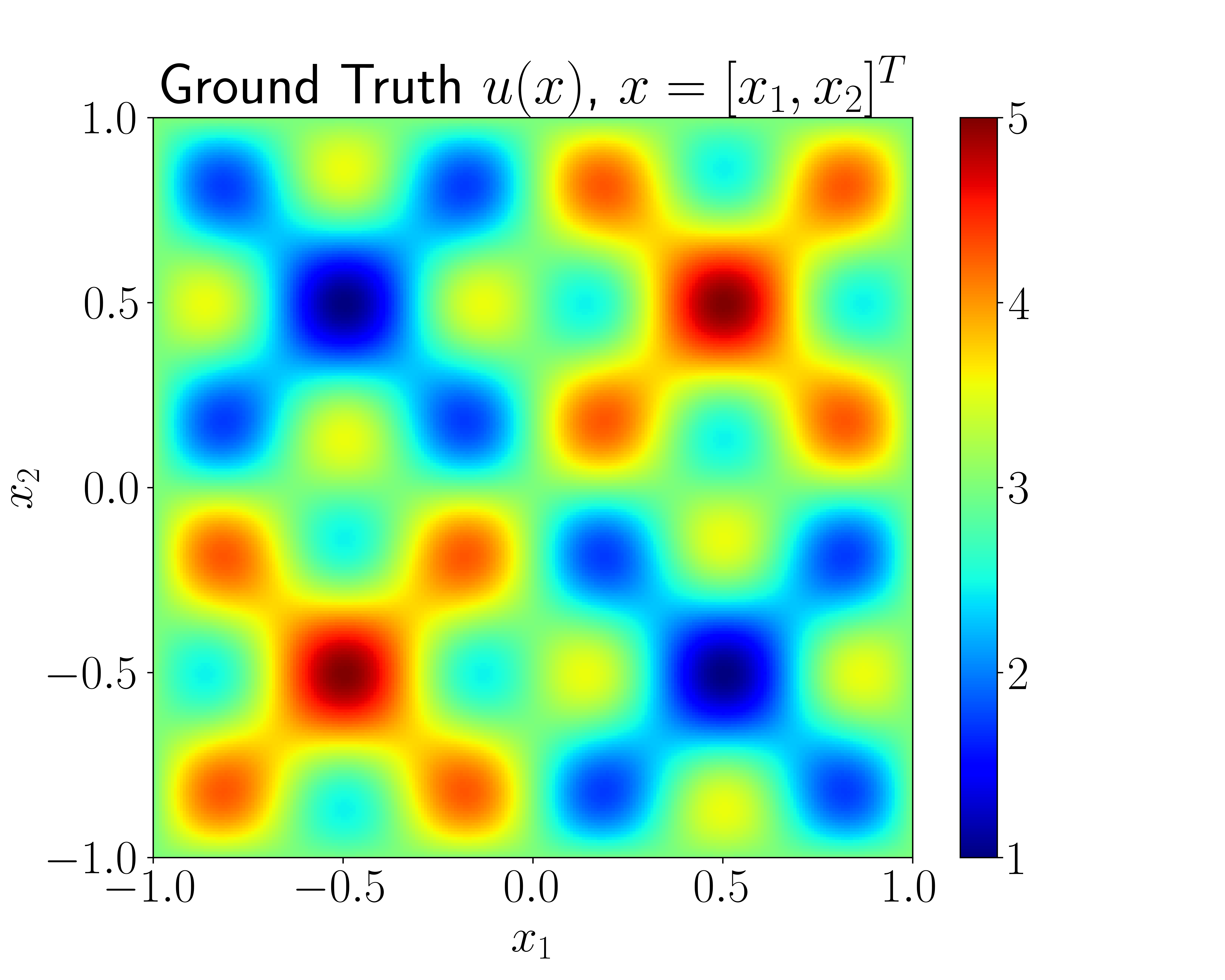}\label{fig:PINN u}}
\subfloat[Loss decay vs.~iteration number]{\includegraphics[width = 0.33\textwidth]{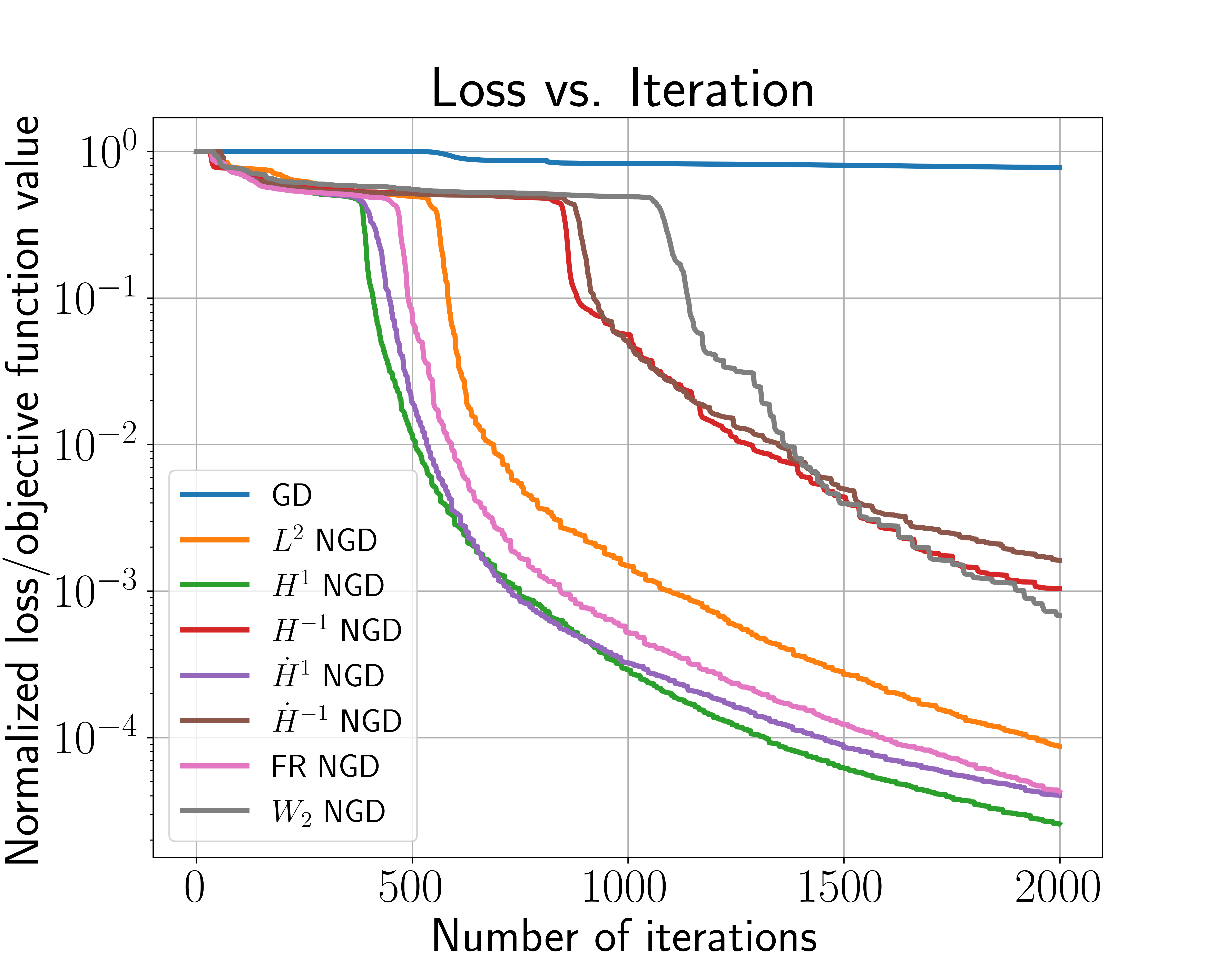}
\label{fig:PINN loss vs iter}}
\subfloat[Loss decay vs.~wall clock time]{\includegraphics[width = 0.33\textwidth]{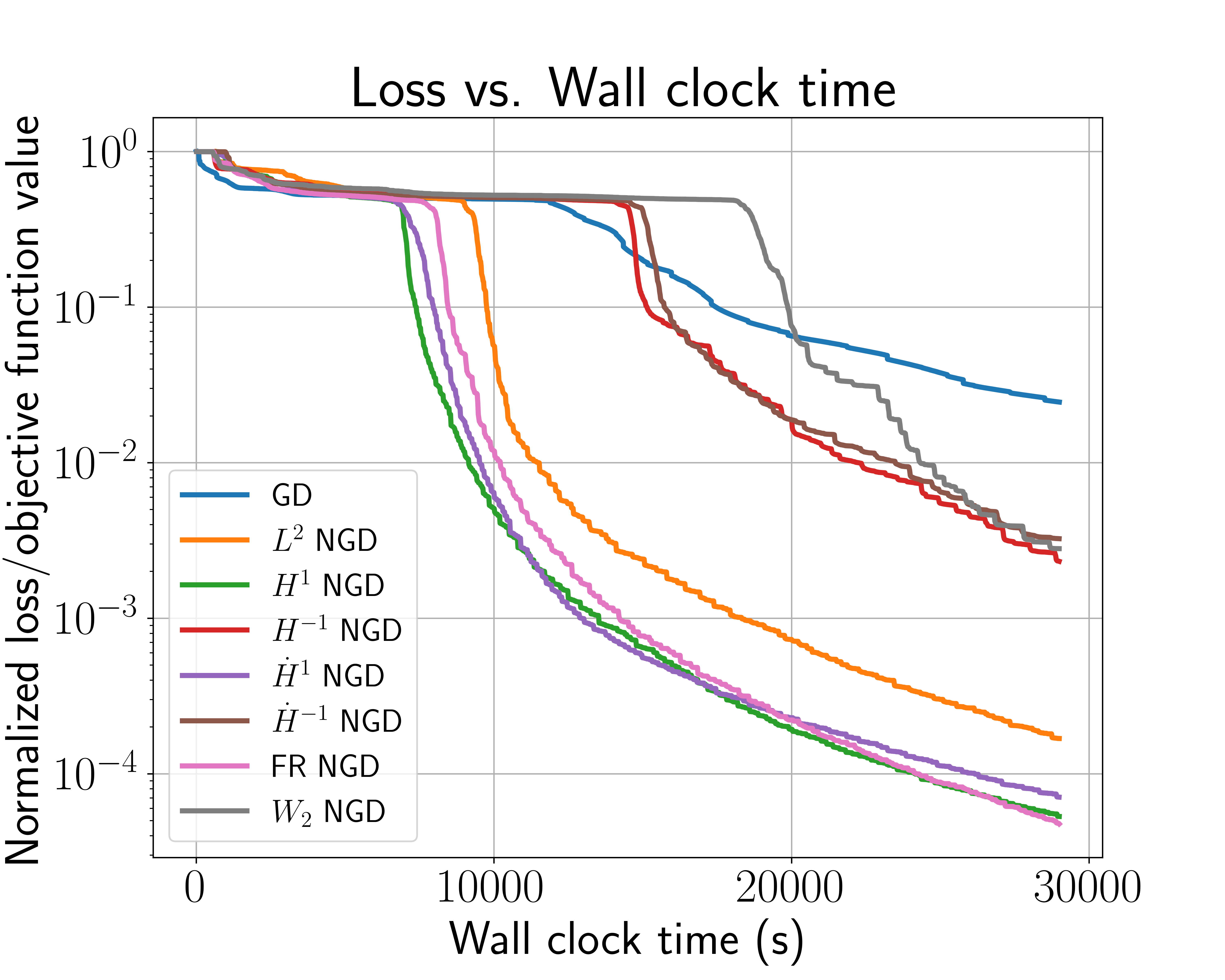}\label{fig:PINN loss vs time}}
\caption{(a): PINN example true solution; (b) loss function value decay in terms of the number of iterations; (c) loss function value decay in terms of the wall clock time.\label{fig:PINN}}
\end{figure}

\subsection{Full waveform inversion} \label{subsec:FWI}
Finally, we present a full waveform inversion (FWI) example where the Jacobian is not explicitly given. As a PDE-constrained optimization, the dependence between the data and the parameter is implicitly given through the scalar wave equation
\begin{equation}\label{eq:wave}
m(x) u_{tt}(x,t) + \Laplace u (x,t) = s (x,t),
\end{equation}
where $s(x,t)$ is the source term and~\eqref{eq:wave} is equipped with the initial condition $u(x,0) = u_t(x,0) = 0$ and an absorbing boundary condition to mimic the unbounded domain. 

%With the observed data $d_{r} = u(x_r,t)$ at a sequence of receivers $\{x_r\}, r=1,\ldots, n_r$, we perform the linear scaling $\rho_r = \frac{d_r+c}{<d_r+c>}$ with a large enough constant $c$ to transform the data into probability densities and $<f> = \int f dt$; see~\cite{engquist2019seismic,engquist2020optimal} for more details about the normalization. 

After discretization, the unknown function $m(x)$ becomes a finite number of unknowns, which we denote by $\theta$ for consistency. Unlike the Gaussian mixture model, the size of $\theta$ in this example is large as $p=36720$. We obtain the observed data $\rho_{r} = u(x_r,t)$ at a sequence of receivers $\{x_r\}$, for $r=1,\ldots, n_r$. The least-squares objective function is
\begin{equation}\label{eq:test2_obj}
    f(\rho(\theta)) = \frac{1}{2} \sum_{i=1}^{n_s}\sum_{r=1}^{n_r} \|\rho^*_{i,r} - \rho_{i,r}(\theta)\|_2^2, 
\end{equation}
where $\rho^*$ is the observed reference data, and $i$ is the source term index to consider inversions with multiple sources $\{s_{i}(x,t)\}$ as the right-hand side in~\eqref{eq:wave}. In our test, $n_s = 21$ and $n_r = 306$. 

The true parameter is presented in~\Cref{fig:fwi_true}.
We remark that minimizing~\eqref{eq:test2_obj} with the constraint~\eqref{eq:wave} is a highly nonconvex problem~\cite{virieux2009overview}. We avoid dealing with the nonconvexity by choosing a good initial guess; see~\Cref{fig:fwi_init}. One may also use other objective functions such as the Wasserstein metric to improve the optimization landscape~\cite{engquist2020optimal}. We follow~\Cref{subsec:Z_unavail} to carry out the implementation for various NGD methods since the Jacobian $\partial_\theta \rho$ is not explicitly given, and the adjoint-state method has to be applied based on~\eqref{eq:wave}. The step size is chosen based on back-tracking linear search. We use the same criteria for all algorithms.
The GD (see~\Cref{fig:fwi_std}) converges slowly compared to the NGD methods, while $\dot{H}^1$, $L^2$, $\dot{H}^{-1}$ and $W_2$ NGDs are in descending order in terms of image resolution measured by both the objective function and the structural similarity index measure (SSIM); see~\Cref{fig:fwi_l2}-\ref{fig:fwi_loss}. The convergence history in~\Cref{fig:fwi_loss} shows the objective function decay with respect to the number of propagations (see~\Cref{tab:propagation number}). For FWI, each propagation corresponds to one wave equation (PDE) solve with different source terms. Note that wavefields are not naturally probability distributions. Thus, when we implement the $W_2$ natural gradient, we normalize the data to be probability densities following~\cite{engquist2019seismic,engquist2020optimal}. As we have discussed in~\Cref{rmk:Hm1_vs_W2}, the $W_2$ and $\dot{H}^{-1}$ natural gradients are closely related, which are also reflected in this numerical example as the reconstructions in~\Cref{fig:fwi_Hm1,fig:fwi_W2} are very similar. All the tests shown in~\Cref{fig:fwi} directly demonstrate that NGDs are typically faster than GD, and more importantly, the choice of the metric space $(\M, g)$ for NGD (see~\eqref{eq:nat_grad_gen}) also has a direct impact on the convergence rate.

%The standard gradient descent (see \Cref{fig:fwi_std}) converges slowly compared to the natural gradient descent methods, while the $L^2$ natural gradient descent converges faster than the $\dot{H}^{-1}$ natural gradient descent as one can see from both the comparison in terms of image resolution (see  \Cref{fig:fwi_l2} and \Cref{fig:fwi_Hm1}) and the loss function decay that measures the data fitting quantitatively (see \Cref{fig:fwi_loss}). The tests shown in~\Cref{fig:fwi} directly demonstrate that the natural gradient descent is typically faster than the standard gradient descent, and more importantly, the choice of the metric space $(\M, g)$ in~\eqref{eq:nat_grad_gen} for the natural gradient descent also has a direct impact on the convergence rate.

\begin{figure}
\centering
\subfloat[true parameter]{\includegraphics[width = 0.25\textwidth]{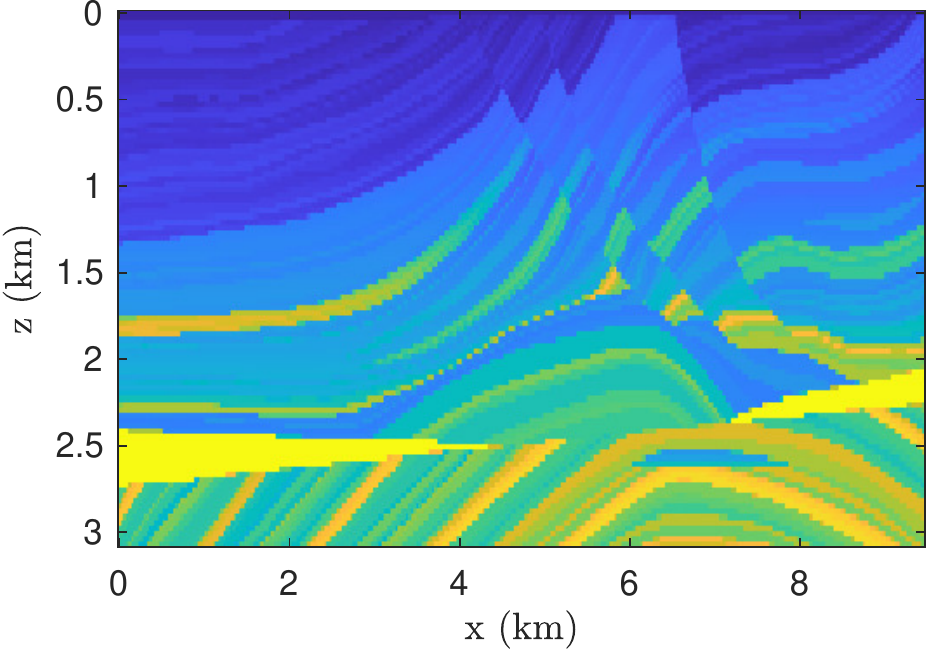}\label{fig:fwi_true}}
\subfloat[initial, SSIM$=0.31$]{\includegraphics[width = 0.25\textwidth]{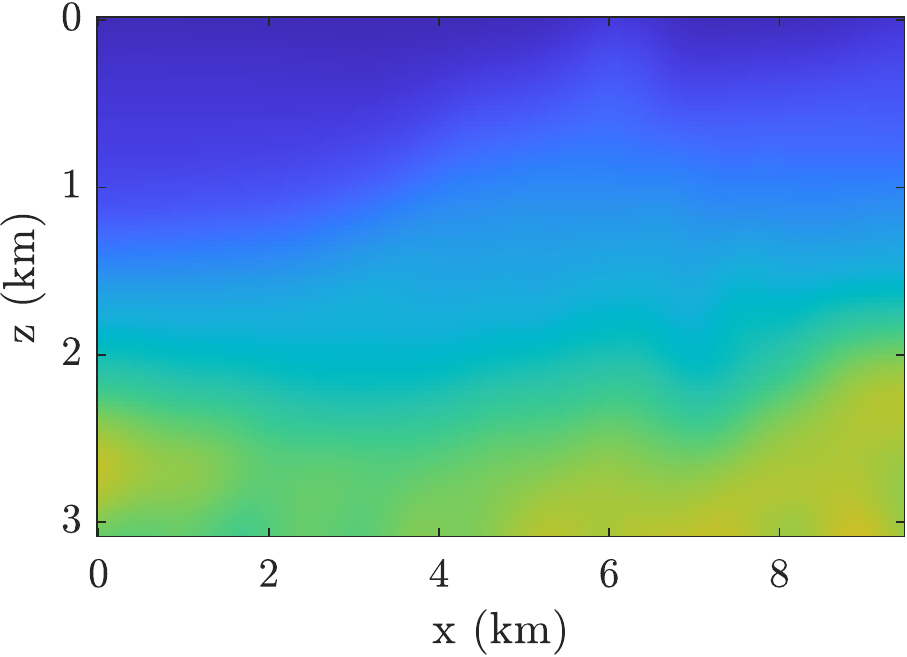}\label{fig:fwi_init}} 
\subfloat[GD, SSIM$=0.44$]{\includegraphics[width = 0.25\textwidth]{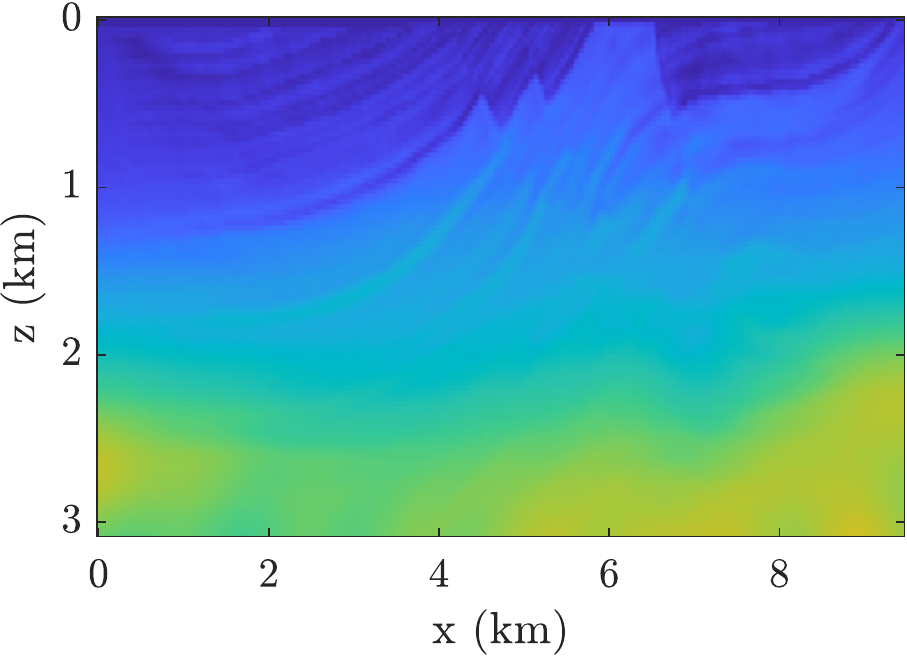}\label{fig:fwi_std}}
\subfloat[$L^2$ NGD, SSIM$=0.58$]{\includegraphics[width = 0.25\textwidth]{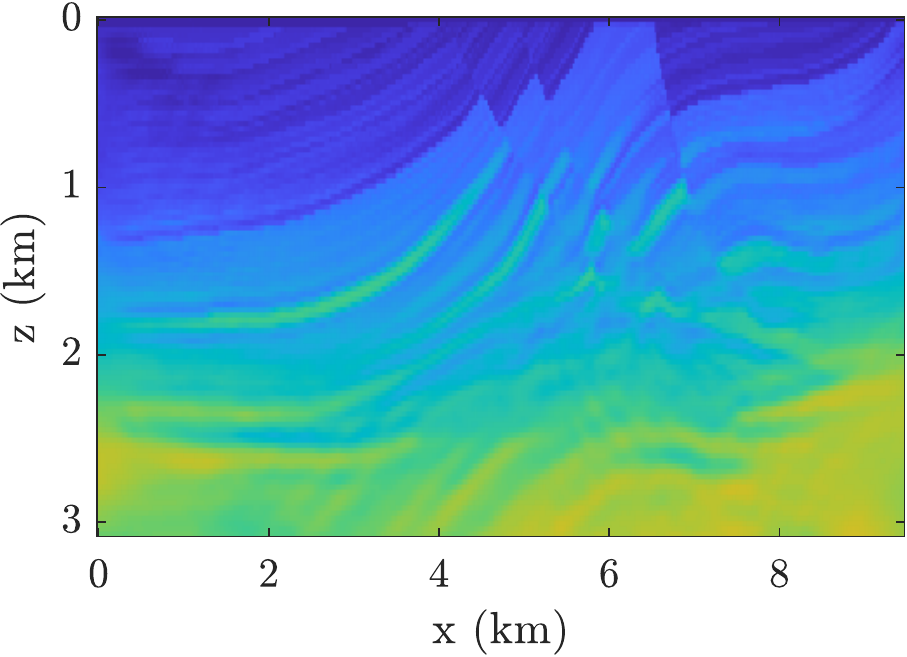}\label{fig:fwi_l2}}\\
\subfloat[$\dot{H}^{-1}$ NGD, SSIM$=0.53$]{\includegraphics[width = 0.25\textwidth]{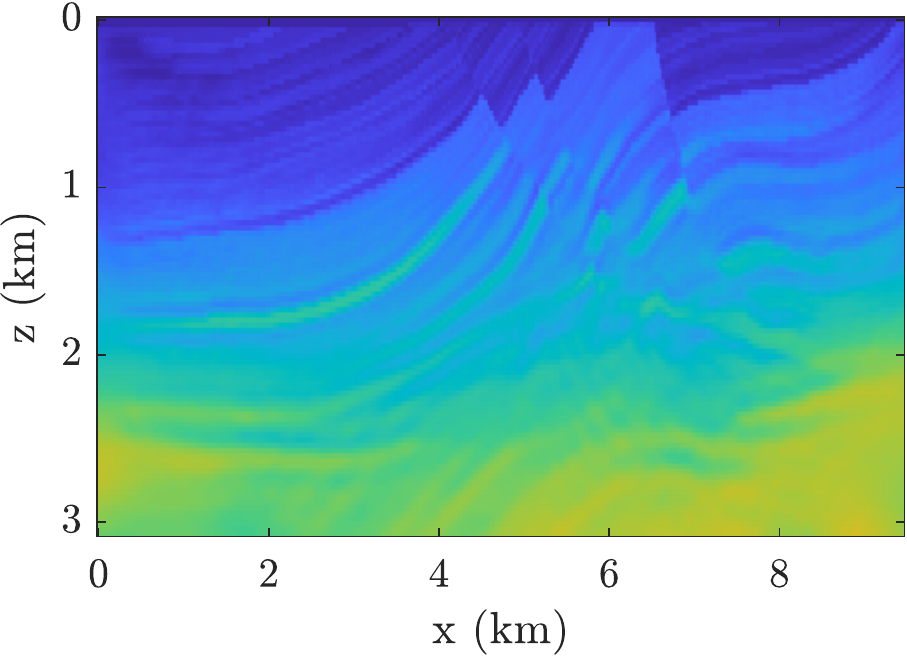}\label{fig:fwi_Hm1}}
\subfloat[$W_2$ NGD, SSIM$=0.53$]{\includegraphics[width = 0.25\textwidth]{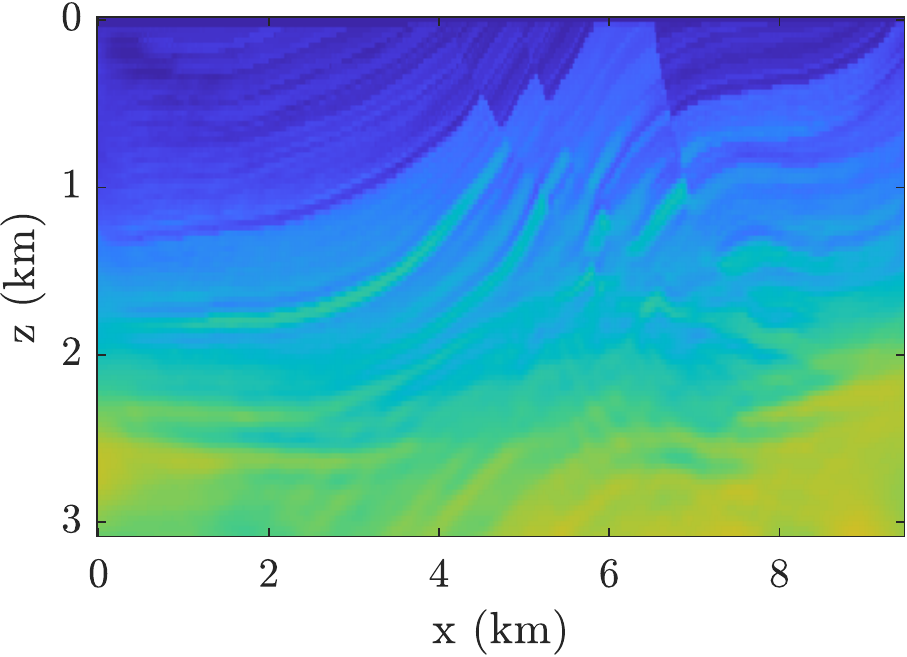}\label{fig:fwi_W2}}
\subfloat[$\dot{H}^{1}$ NGD, SSIM$=0.61$]{\includegraphics[width = 0.25\textwidth]{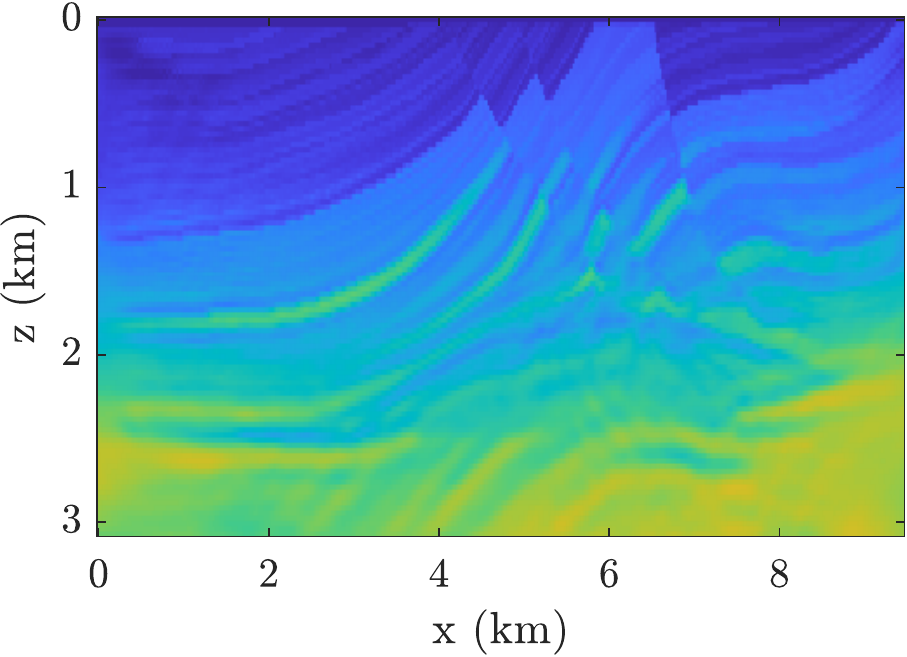}\label{fig:fwi_H1}}
\subfloat[convergence history]{\includegraphics[width = 0.25\textwidth]{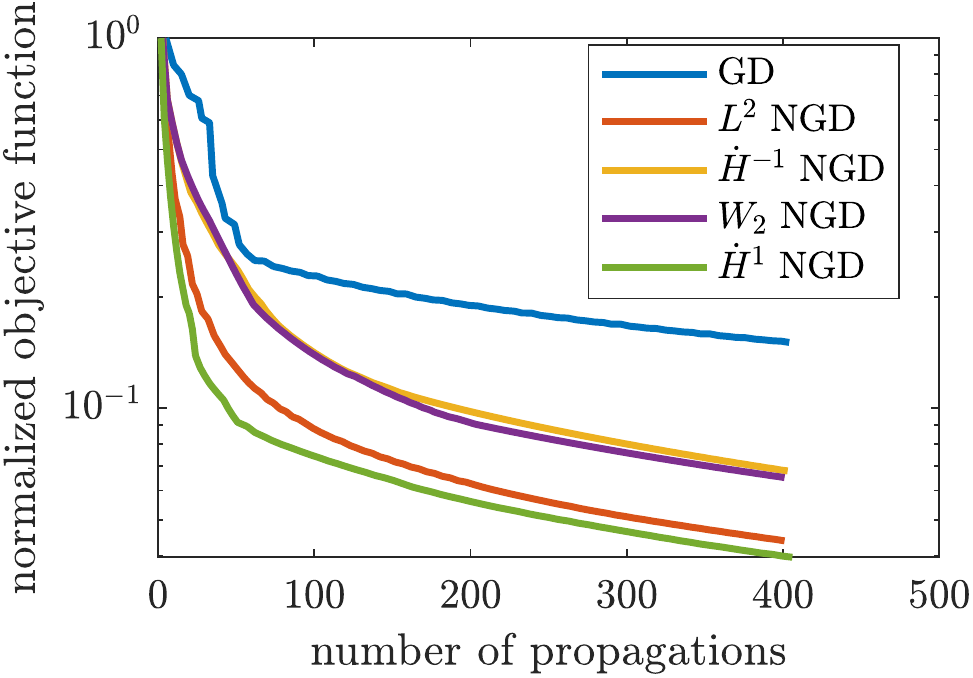}\label{fig:fwi_loss}}
\caption{FWI example: (a)~ground truth; (b)~initial guess; (c)-(g) inversion results using GD and NGDs based on the $L^2$,  $\dot{H}^{-1}$, $W_2$ and $\dot{H}^{1}$ metrics after $400$ PDE solves; (h) the history of the objective function decay versus the number of propagations/PDE solves. SSIM denotes the structural similarity index measure compared with~(a). A bigger value means better similarity.} \label{fig:fwi}
\end{figure}
%; (f)~the convergence history in terms of the objective function~\eqref{eq:test2_obj} from the three methods

\section{Conclusions}\label{sec:conclusions}

Inspired by the natural gradient descent (NGD) method in learning theory, we develop efficient computational techniques for PDE-based optimization problems for generic choices of the \textit{``natural''} metric. NGD   exploits the geometric properties of the state space, which is particularly appealing for PDE applications that have rich flexibility in choosing the metric spaces.

Handling the high-dimensional parameter space and state space are the two main computational challenges of NGD methods. Here, we propose numerical schemes to tackle the high-dimensional parameter space when the forward model, with a relatively low-dimensional state space, is discretized on a regular grid. 
Our approach relies on reformulating the problem of finding NGD directions as standard $L^2$-based least-squares problems on the continuous level. After discretization, the NGD directions can be efficiently computed by numerical linear algebra techniques. We discuss both explicit and implicit forward models by taking advantage of the adjoint-state method.

The second computational challenge of high-dimensional state space stands out for Sobolev and Wasserstein NGDs. In this work, we apply finite differences on regular grids for low-dimensional state space. On the one hand, when the state-space dimension is high, discretization on a regular grid suffers from the curse of dimensionality, and other parameterizations have to be considered. On the other hand, when the state variable is not given on a regular grid, there are other ways to discretize those differential operators, which require more careful attention. For example, generative models are push-forward mappings, representing probability measures in high-dimensional state spaces by point clouds (samples). Applying the Sobolev and Wasserstein NGDs to state variables in the form of empirical distributions will most likely require alternative discretization approaches for differential operators, such as graph- or neural network-based methods.

A very interesting question is what the best ``natural'' metric in NGD should be. Regarding this, we numerically investigated the convergence behaviors of GD and various NGD methods based on different metric spaces. The empirical results indicate that the choice of the metric space in an NGD not only can change the rate of convergence but also influence the stationary point where the iterates converge, given a nonconvex optimization landscape. A rigorous understanding of the ``best'' metric choice for a given problem is an important research direction. For maximum likelihood estimation problems, the Fisher--Rao NGD is asymptotically Fisher-efficient; Sobolev NGDs (e.g., $H^1$ and $\dot{H}^1$) are suitable for solving optimal transport and mean-field game problems~\cite{jacobs2019solving,jacobs2021backforth,liu2021computational,liu2021splitting}; when the metric is induced by $f$ and suitable conditions are met, the corresponding NGD is asymptotically Newton's method~\cite{mallasto2019formalization,chen2020optimal,martens2020new}. Despite these results, to our knowledge, there is no general framework for a systematic derivation of the best natural gradient metric for a given problem.

It is reasonable to believe that as the topic matures, there will be an increasing necessity for efficient techniques for computing NGD directions for a diverse set of problems and metrics. Hence, in this paper, we choose to focus on a \textit{generic computational framework} leveraging state-of-the-art optimization techniques. %So, the problem of identifying the best metric for a problem under consideration is somewhat an orthogonal question to the one we address; once the user chooses the metric, they can utilize the machinery we provide for an efficient implementation of their NGD.
Nonetheless, the geometric formalism considered here could be beneficial for the theoretical understanding of the ``best'' metric choice. Indeed, as mentioned in~\cite[Sec.~15]{martens2020new}, local approximation of the loss function cannot explain all global properties of NGD. The metric in the $\rho$-space, on the other hand, can impact the global properties of $f$. More specifically, it might convexify $f$~\cite[Appendix B]{gangbo22global} or make it Lipschitz, paving a way towards the analysis of the NGD as a first-order method in the $\rho$-space. We find this line of research an intriguing future direction.

Finally, the full potential of randomized linear algebra techniques remains to be explored. We discuss a mini-batch version of our algorithm in~\Cref{subsubsec:minibatch} and several low-rank approximation techniques in~\Cref{sec:qr_low_rank,subsec:Z_unavail_Hut,sec:column_Z}. Nevertheless, the success of randomized linear algebra techniques for very high-dimensional problems warrants a more thorough investigation of the theoretical and computational aspects of these techniques adapted to our setting.

\section*{Acknowledgments}
L.~Nurbekyan was partially supported by AFOSR MURI FA 9550 18-1-0502 grant. W.~Lei was partially supported by the 2021 Summer Undergraduate Research Experience (SURE) at the Department of Mathematics, Courant Institute of Mathematical Sciences, New York University. Y.~Yang was partially supported by the National Science Foundation under Award Number DMS-1913129. This work was done in part while Y. Yang was visiting the Simons Institute for the Theory of Computing in Fall 2021. Y. Yang also acknowledges supports from Dr.~Max R\"ossler, the Walter Haefner Foundation and the ETH Z\"urich Foundation.

\bibliographystyle{siamplain}
\bibliography{natgrad}

\appendix

\section{Symbols and Notations}
See~\Cref{tab:notations} for all the notations in~\Cref{sec:intro,sec:math_nat,sec:num_nat}.
\setlength{\tabcolsep}{10pt} % Default: 6pt
\renewcommand{\arraystretch}{1} % Default: 1
\begin{table}[!ht]
\centering
\caption{Table of notations in~\Cref{sec:intro,sec:math_nat,sec:num_nat}.\label{tab:notations}}
\begin{tabular}{ll} 
\hline
\hline
\textit{\Cref{sec:intro}}\\
\hline
$\theta$ & the unknown parameter \\
$\rho$ & the state variable that depends on $\theta$ \\
$f(\rho)$ & the loss function that depends on $\rho$\\
$(\mathcal{M}, d_\rho)$, $(\Theta, d_\theta)$  & the metric space of $\rho$ and $\theta$, respectively\\
\hline
\hline
\textit{\Cref{sec:math_nat}}\\
\hline
$(\mathcal{M}, g)$  &  the space $\mathcal{M}$ endowed with a Riemannian  metric $g$\\
$T_{\rho} \M$ & the tangent space of $\M$ \\
$p$ & the dimension of the parameter, $\theta \in \Theta \subseteq \mathbb{R}^p$\\
$\partial^g_{\theta_i} \rho(\theta) \in T_{\rho} \M $ & the tangent vector of $\rho(\theta)$ with respect to $\theta_i$ based on \\ & the Riemannian geometry $(\M,g)$, $1\leq i \leq p$\\
$\partial^g_\rho f(\rho) \in T_\rho \M$ & the metric gradient of $f(\rho)$ with respect to $\rho$ based on \\ & the Riemannian geometry $(\M,g)$\\
$\eta^{nat}$, $\eta^{std}$ & the natural and standard gradient directions for $\theta$\\
$\partial_\theta f(\rho(\theta))$ & the gradient of $f(\rho(\theta))$ with respect to $\theta$\\
$P \partial^g_\rho f$ & the $\langle\cdot,\cdot\rangle_{g(\rho)}$-orthogonal projection of $-\partial^g_\rho f$ onto\\
&$\operatorname{span} \{\partial^g_{\theta_1} \rho,\ldots, \partial^g_{\theta_p} \rho \}$\\
$G(\theta)$ & the information matrix $G_{ij}(\theta)=  \langle \partial^g_{\theta_i} \rho, \partial^g_{\theta_j} \rho \rangle_{g(\rho(\theta))}, i,j=1,\ldots,p$ \\
$\zeta, \hat{\zeta}$  & tangent vectors on $T_\rho \M$\\
$\zeta_i = \partial_{\theta_i} \rho$, $i=1,\ldots, p$ &  tangent vectors on the Euclidean space $( L^2(\R^d), \langle \cdot, \cdot \rangle_{L^2(\R^d)})$\\
$\partial_\rho f$ & the metric gradient of $f(\rho)$ in $(L^2(\R^d), \langle \cdot, \cdot \rangle_{L^2(\R^d)})$\\
$G^{L^2}, G^{H^s}, G^{\dot{H}^s}, G^{FR}, G^{W}$ & the information matrices for different Riemannian metrics \\
$\bD^s$ & a differential operator that outputs   a vector of all the \\&  partial derivatives  up to order $s$ where $s\geq0$\\
$A^*$, $A^\dagger$ & the adjoint and the pseudoinverse of the linear operator $A$ \\
$\chi,\hat{\chi}$ & the tangent vectors in $H^{-s}$ mapped from $\zeta, \hat{\zeta}$ in $H^{s}$, $s<0$ \\
$\Laplace$ & the Laplacian operator\\
 $\widetilde{\bD}^s$ & a differential operator that outputs a vector of all the \\ & partial derivatives of positive order up to $s$ where $s> 0$\\
$\Pp_2(\R^d)$ & the set of Borel probability measures of finite second moments \\
$f_\sharp \rho$ & the pushforward distribution of $\rho$ by $f$ \\
$\Gamma(\rho_1,\rho_2)$ &  the set of all  measure $\pi \in \Pp(\R^{2d})$ with  $\rho_1$ and $\rho_2$ as marginals \\
$v,\hat{v},w,\{v_i\}_{i=1}^p $ & the tangent vectors in $T_\rho \Pp_2(\R^d) \subset L^2_\rho(\R^d;\R^d)$ \\
$\{\Tilde v_i\}_{i=1}^p$ & the re-normalized Wasserstein tangent vectors, $\Tilde v_i =  \sqrt{\rho}~v_i$ \\
$\bB$ & the differential operator defined by $\bB  \Tilde{v}  = -\nabla \cdot (\sqrt{\rho(\theta )}~\Tilde{v})$\\
$ \bB_k $ & a generalized version of $\bB$ given by $\bB_k \tilde{v}=-\nabla \cdot (\rho(\theta)^{k} \Tilde{v} )$ \\
$\bL$ & with different choice of $\bL$, all natural gradient directions  can  be \\ &   formulated as $\eta^{nat}  =\argmin_{\eta \in \R^p}  \|(\bL^*)^\dagger \partial_\rho f+\sum_{i=1}^p \eta_i~ \bL \zeta_i \|^2_{L^2(\R^d)}$ \\
\hline
\hline
\textit{\Cref{sec:num_nat}} \\
\hline
$\rho \in \R^k$ & the discretized state variable \\
$\partial_\rho f$, $Z =\partial_\theta \rho  $ & the finite-dimensional gradient and Jacobian in Euclidean space \\
$L$ & the discretization of the operator $\bL$ for different metric spaces \\
$G_L = Y^\top Y$ & the discretized information matrix, $Y = L Z$\\
$\eta^{nat}_L$ & the natural gradient direction in a unified framework~\eqref{eq:unified} \\
$ h(\rho,\theta)= \bf 0$ & the implicit dependence of $\rho$ on $\theta$\\
$\lambda_\xi, \lambda$ & the adjoint variable, solutions to the adjoint equation\\
\hline
\end{tabular}
\end{table}

\section{Algorithmic Details Regarding Numerical Implementation}
This section presents more details on the numerical implementation of the NGD methods. In particular, we explain how to obtain the matrix $L$ in~\eqref{eq:unified} for the WNGD~\eqref{eq:nat_grad_W2_L2} in~\Cref{sec:WNGD}. We have proposed in~\Cref{subsec:Z_avail} that the QR factorization could efficiently solve the least-squares problem~\eqref{eq:unified}. In~\Cref{sec:qr_low_rank}, we discuss how to handle rank deficiency in $Y = LZ$ through the QR factorization. 

The main difficulties of computing NGD for large-scale problems include no direct access to the Jacobian $Z$ (see~\Cref{subsec:Z_unavail}) and the computational cost of handling $Z$ even if it is directly available. Here, we present two interesting ideas that may mitigate these challenges, although we have not thoroughly investigated them in the context of NGD methods. We discuss in~\Cref{subsec:Z_unavail_Hut} one strategy based on randomized linear algebra if the Jacobian $Z$ is unavailable. In~\Cref{sec:column_Z}, we briefly comment on an idea to further reduce the computational complexity of the NGD methods by possibly obtaining a low-rank approximation of the Jacobian $Z$.

\subsection{More Discussions on Computing the Wasserstein Natural Gradient}\label{sec:WNGD}
As explained in~\Cref{subsec:W2nat_math}, the Wasserstein tangent vectors at $\rho$ are velocity fields of minimal kinetic energy in $L^2_\rho(\R^d;\R^d)$. After a change of variable, $\Tilde v_i=\sqrt{\rho}~v_i$ and $\Tilde v_i$ satisfies~\eqref{eq:zeta_to_v}. We will discuss next how to solve this minimization problem numerically.

\textbf{Discretization of the divergence operator.}
To compute the Wasserstein natural gradient, the first step is to solve~\eqref{eq:zeta_to_v}, which becomes~\eqref{eq:obtain v's} after discretization. 
\begin{equation}\label{eq:obtain v's}
\min_{y} \|y\|_2^2 \quad \text{s.t.}~B y =\zeta_i,\quad i = 1,\ldots,p.
\end{equation}
%We denote the right-hand side of the linear constraint as $b$. 
If the domain $\Omega$ is a compact subset of $\R^d$ (in terms of numerical discretization), the divergence operator in~\eqref{eq:zeta_to_v} comes with a zero-flux boundary condition. That is, $\tilde{v} = 0$ on $\partial \Omega$. For simplicity, we describe the case $d=2$ where $\Omega$ is a rectangular cuboid. All numerical examples we present earlier in this paper belong to this scenario.

First, we discretize the domain $ [\mathfrak{a},\mathfrak b] \times [\mathfrak c,\mathfrak d]$ with a uniform mesh with spacing $\Delta x$ and $\Delta y$ such that $x_{0} = \mathfrak{a}$, $x_{n_x}  = \mathfrak{b}$, $y_{0} = \mathfrak{c}$, and $y_{n_y}  = \mathfrak{d}$. The left-hand side of the linear constraint in~\eqref{eq:zeta_to_v} becomes a matrix 
$$
B=  - \begin{bmatrix} A_x D & A_y D\end{bmatrix}
$$ 
in~\eqref{eq:obtain v's} 
where $D  = \text{diag}(\sqrt{\Vec{\rho}})$,
$A_x = \frac{1}{2\Delta x} C_{n_x-1} \otimes  I_{n_y-1}  $ and $A_y = \frac{1}{2\Delta y}  I_{n_x-1} \otimes  C_{n_y-1}$. Here, $\Vec{\rho}$ is a vector-format discretization of the function $\rho$ while skipping the boundary points,  $\otimes$ denotes the Kronecker product, $I_n \in \R^{n\times n}$ is the identify matrix and $C_n\in\R^{n\times n}$ is the central difference matrix with the zero-Dirichlet boundary condition.
\begin{equation}\label{eq:central_diff}
 C_n =    \begin{bmatrix} 
    0 & 1 & \\
    -1 & 0 & 1 & \\
      & \ddots & \ddots & \ddots \\
      & &-1 & 0  & 1 \\
     & &  & -1 & 0 
    \end{bmatrix}_{n\times n}.
\end{equation}

One may also use a higher-order discretization for the divergence operator in~\eqref{eq:zeta_to_v}. The discretization of the vector field $\tilde v = (\tilde v_x, \tilde v_y)^\top$ is $y = (y_1^\top, y_2^\top)^\top$ in~\eqref{eq:obtain v's} where $y_1$  and $y_2$ are respectively the vector-format of $\tilde v_x$ and $\tilde v_y$ while skipping the boundary points due to the zero-flux boundary condition. Note that $B$ is full rank if $\rho$ is strictly positive, and $n_x$, $n_y$ are odd. We remark that $B$ and $y$ remain very similar structures if $\Omega\subset \R^d$ with $d>2$.

\textbf{$Z$ available.} 
If $Z=(\zeta_1~\zeta_2~\ldots~\zeta_p)$ is available, we can solve~\eqref{eq:zeta_to_v} directly. After discretization, these equations reduce to constrained minimum-norm problems~\eqref{eq:obtain v's}, where $B$ is the discretization of the differential operator $-\nabla \cdot (\sqrt{\rho}\ \sbullet[.75])$ evaluated at the current $\theta$ (and thus $\rho(\theta)$). The solution to~\eqref{eq:obtain v's} can be recovered via the pseudoinverse of $B$ as
\begin{equation}\label{eq:B_dagger}
    Y=B^\dagger Z,\quad\text{where}\quad  Y=(\Tilde{v}_1~\Tilde{v}_2~\ldots~\Tilde{v}_p)\quad \text{and}\quad  Z=(\zeta_1~\zeta_2~\ldots~\zeta_p).
\end{equation}
In our case, $B$ is underdetermined and we assume it to have full row ranks. 
%We will discuss the case in Section~\ref{subsec:lowrank} when $B$ does not have full row ranks. 
We could perform the QR decomposition of $B^\top$ in the ``economic'' size: 
\begin{equation}\label{eq:QR B_dagger}
    B^\dagger = Q (R^{\top} )^{-1},\quad \text{where}\quad B^\top = Q R.
\end{equation}
Since $R^\top$ is lower diagonal, $\Tilde{v}_i = Q(R^{\top} )^{-1}\zeta_i$ can be efficiently calculated via forward substitution. If $p$ is not too large, and we have access to $\{\zeta_i\}$ directly, this is an efficient way to obtain $\{\Tilde v_i\}$.

Once we obtain $Y$, we can compute the $W_2$ NGD direction since~\eqref{eq:nat_grad_W2} reduces to
\begin{equation}\label{eq:vtilde}
        \eta^{nat}_{W_2}=\argmin_{\eta \in \R^p} \bigg \|\sqrt{\rho}~\partial^W_\rho f+\sum_{i=1}^p \eta_i \Tilde{v}_i \bigg\|^2_{L^2(\R^d;\R^d)}=-Y^\dagger \left(\sqrt{\rho}~\partial^W_\rho f\right),
\end{equation}
where $\partial^W_\rho f$ is related to $\partial_\rho f$ based on~\eqref{eq:Was-flat-connection}, and $Y^\dagger$ is the pseudoinverse of $Y$ which one can obtain by QR factorization; see details in~\Cref{subsec:Z_avail}.

We can also compute the $W_2$ information matrix based on $B^\dagger$ obtained via the QR factorization~\eqref{eq:QR B_dagger}. That is,
\[
G_{w_2}=Y^\top Y = Z^\top (BB^\top)^\dagger Z = Z^\top (B^\dagger )^\top B^\dagger Z.
\]
Therefore, if $Y$ has full column ranks, the common approach is to invert the information matrix $G_{w_2}$ directly and obtain the NGD direction following~\eqref{eq:nat_std_relation} as
\[
     \eta^{nat}_{W_2} = - G_{w_2}^{-1}\ \partial_\theta f(\rho(\theta)).
\]
%We do not recommend this approach for two main reasons. First, $Y$ might not have full column ranks. Second, the condition number of $G_{w_2}$ can be nearly the square of the condition number of $Y$, which is more likely to suffer from numerical instabilities.

\textbf{Discretization of the Wasserstein Gradient $\partial_\rho^W f$.}
Based on~\eqref{eq:nat_grad_W2_L2}, we need to discretize the weighted Wasserstein Gradient, $b \approx  -\sqrt{\rho} \partial_\rho^W f =  -\sqrt{\rho} \nabla \partial_\rho f$, such that the WNGD $\eta_{W_2}^{nat} = Y^\dagger b$ where $Y = B^\dagger Z$. We remark that the discretization of the gradient operator in $\sqrt{\rho} \nabla  \partial_\rho f(\rho(\theta))$ needs to be the numerical adjoint with respect to the matrix $-B$, the discretization of the divergence operator. That is, 
\begin{equation*}
b \approx  - \sqrt{\rho} \nabla \left( \partial_\rho f(\rho(\theta)) \right) =   (- B) ^\top   \partial_\rho f.
% = 
% \begin{bmatrix}
%   D A_{x}^\top \\
%   D A_{y}^\top
% \end{bmatrix} \partial_\rho f =  
% -\begin{bmatrix}
%   D A_{x} \\
%   D A_{y}
% \end{bmatrix} \partial_\rho f,
\end{equation*}
This requirement is to ensure that
\begin{align*}
   \partial_{\theta_j}f(\rho(\theta)) \approx \partial_\rho f ^\top  \zeta_j = \partial_\rho f^\top   B y_j  =  \left( B^\top \partial_\rho f \right)^\top y_j  =  - b ^\top y_j \approx \langle \sqrt{\rho} \nabla \partial_\rho f,    \sqrt{\rho} v_j\rangle_{L^2(\R^d;\R^d)},
\end{align*}
which is the discrete version of 
\begin{align*}
    \lim \limits_{t\to 0}\frac{f(\rho+t \zeta)-f(\rho)}{t}=\int_{\R^d} \partial_\rho f(\rho)(x) \, \zeta(x) dx  = \int_{\mathbb{R}^d}  \sqrt{\rho} \nabla \partial_\rho f(\rho)(x) \cdot \Tilde{v}(x) dx, \quad \forall \zeta \in L^2(\R^d).
\end{align*}
The equation above is the main identity used in the proof for~\Cref{prop1}.

For example, if we use the central difference scheme for the divergence operator $-\nabla \cdot (\sqrt{\rho}\ \sbullet[.75])$, we also need to  use  central difference  for the gradient operator $\nabla$. Similarly, if one uses forward difference  for $-B$, the backward difference  should be employed for the gradient operator $\nabla$.

\subsection{Dealing with rank deficiency}\label{sec:qr_low_rank}
Note that in~\eqref{eq:unified} , we need to solve a least-squares problem given the matrix $Y=LZ$ to find the NGD direction based upon a wide range of Riemannian metric spaces. For simplicity, we will consider the problem in its general form: finding the least-squares solution $\eta$ to $Y \eta  = b$ where $b = - (L^\top)^\dagger \partial_\rho f$ based on~\eqref{eq:unified}.

The standard QR approach only applies if $Y$ has full column rank, i.e., $\text{rank}(Y) = p$ while $Y\in\R^{k\times p}$. Otherwise, if $\text{rank}(Y) = r <p$, we are facing a rank-deficient problem, and an alternative has to be applied. Even if $Y$ is full rank, sometimes we may have a nearly rank-deficient problem when the singular values of $Y$, $\{\sigma_i\}$, $i=1,\ldots,p$, decay too fast such that $\sigma_{r+1},\ldots, \sigma_p \ll \sigma_r$. A conventional way to deal with such situations is via QR factorization with column pivoting.

In order to find and then eliminate unimportant directions of $Y$, essentially, we need a \textit{rank-revealing} matrix decomposition of $Y$. While SVD (singular value decomposition) might be the most common choice, it is relatively expensive, which motivated various works on rank-revealing QR factorization as they take fewer flops (floating-point operations) than SVD. The column pivoted QR (CPQR) decomposition is one of the most popular rank-revealing matrix decompositions~\cite{heavner2019building}. We remark that CPQR can be easily implemented in Matlab and Python through the standard \texttt{qr} command, which is based upon LAPACK in both softwares~\cite{lapack99}.

Applying CPQR to $Y$ yields
\[
    YP = QR,
\]
where $P$ is the permutation matrix. Thus, the linear equation $Y \eta = b$ becomes 
\[
    Y PP^\top \eta = QR P^\top \eta = QR \,\eta_p = b,\quad \text{where } \eta_p = P^\top \eta.
\]
Now, we denote by $\widetilde Q$ and $\widetilde R$ the truncated versions of $Q$ and $R$ respectively by keeping the first $r$ columns of $Q$ and the first $r$ rows of $R$. We may solve the linear system below instead 
\[
\widetilde R \eta_p =   \widetilde Q^\top b.
\]

The least-squares solution is no longer unique since we have truncated $R$ due to the (nearly) rank deficiency of $Y$. By convention, one may pick the one with the minimum norm among all the least-squares solutions. Since $\|\eta\|_2 = \|\eta_p\|_2$ as $P$ is a permutation matrix, this is equivalent to finding a minimum-norm solution to the above linear system. This can be done by an additional QR factorization. Let 
$$
\widetilde R^\top = Q_1 R_1
$$ 
where $Q_1\in \R^{p\times r}$ has orthonormal columns and $R_1\in \R^{r\times r}$ is invertible. As a result, 
$$
\eta_p = Q_1 (R_1^\top)^{-1}\widetilde Q^\top b.
$$ 
Finally, we may obtain the solution
\[
\eta = P\eta_p = PQ_1 (R_1^\top)^{-1}\widetilde Q^\top b.
\]
Again, $(R_1^\top)^{-1}$ should be understood as forward substitution.

We may apply the same idea if $B$ in~\eqref{eq:obtain v's} is (nearly) rank deficient while we will keep its dominant $r$ ranks. Note that $B$ is short wide. Applying CPQR to $B^\top$ yields
\[
    B^\top P = QR,
\]
where $P$ is the permutation matrix, $Q$ has orthonormal columns, and $R$ is a $p\times p$ square matrix. Thus, the constraint in~\eqref{eq:obtain v's} becomes
\[
    PP^\top By =  P R^\top Q^\top  y = \zeta_i.
\]
Again, we denote by $\widetilde Q$ and $\widetilde R$ the truncated version of $Q$ and $R$ by keeping the first $r$ columns of $Q$ and the first $r$ rows of $R$ where $r \leq p$. We may solve the linear system below instead 
\[
\widetilde R^\top y_q =  P^\top \zeta_i,\quad \text{where } y_q = \widetilde Q^\top  y.
\]
Since $\widetilde R^\top $ is tall skinny, we may select the least-squares solution to the above system. We perform a QR decomposition in economic size for $\widetilde R^\top$ such that $\widetilde R^\top = Q_2 R_2$. Therefore, 
\[
    y_q = R_2^{-1} Q_2^\top P^\top \zeta_i,
\]
and eventually leads to
\[
    \Tilde{v}_i = y = \widetilde Q y_q =\widetilde Q R_2^{-1} Q_2^\top P^\top \zeta_i.
\]
Note that if $\widetilde R = R$ and $\widetilde Q = Q$, i.e., $r=p$, the solution above coincides with the one obtained from~\eqref{eq:B_dagger}-\eqref{eq:QR B_dagger} since $R_2^{-1} Q_2^\top = (R^\top )^{-1}$.

To sum up, for a tall-skinny matrix $Y$, we compute the following by two QR factorizations while eliminating the unimportant directions during the process:
\[
    Y P = \widetilde Q R_1^\top Q_1^\top,
\]
where $R_1$ is a invertible square matrix while $\widetilde Q$ and $Q_1$ have orthonormal columns. Therefore,
\[
    Y^\dagger = P Q_1 (R_1^\top)^{-1}  \widetilde Q^\top.
\]
Finally, $\eta =Y^\dagger b =  P Q_1 (R_1^\top)^{-1}  \widetilde Q^\top b$. For a short-wide matrix $B$, we compute 
\[
    B^\top P = \widetilde Q R_2^\top Q_2^\top,
\]
where $R_2$ is invertible while  $\widetilde Q$ and $Q_2$ have orthonormal columns. Consequently,
\[
    B^\dagger =  \widetilde Q R_2^{-1}  Q_2^\top  P^\top.
\]
Finally, $\Tilde{v}_i = B^\dagger \zeta_i = \widetilde Q R_2^{-1}  Q_2^\top   P^\top \zeta_i$, for $i=1,\ldots,p$.

\subsection{$Z$ not available: the Hutchinson method}\label{subsec:Z_unavail_Hut}
In this subsection, we present some ideas of approximating $Z$ using Hutchinson's estimator~\cite{hutchinson1990,meyer2020hutch,yao2021adahessian}, a powerful technique from randomized linear algebra. Let $\xi\in \R^k$ be a vector with i.i.d.~random coordinates of mean $0$ and variance $1$. Such random vectors serve as a random basis. That is, 
\[
    Z=\E \left[  \xi \xi^\top Z \right].
\]
Thus, if we have $m$ such random vectors, $\xi_1,\xi_2,\ldots,\xi_m$, then we can estimate
\[
    H_m(Z)=\frac{1}{m}\sum_{k=1}^m \xi_k \xi_k^\top Z.
\]
Furthermore, by introducing the adjoint variables $\lambda_1,\lambda_2,\ldots,\lambda_m$ such that
\begin{equation}\label{eq:lambda_h_adjoint}
    \lambda_k^\top \partial_\rho h = \xi_k^\top,\quad 1\leq k \leq m,
\end{equation}
and using~\eqref{eq:adj_grad}, we obtain
\[
    H_m(Z)= -\frac{1}{m} \sum_{k=1}^m \xi_k \lambda_k^\top \partial_\theta h.
\]
Hence, by replacing $Z$ in \eqref{eq:unified} with its approximation $H_m(Z)$, we obtain an approximated NGD direction as
\begin{equation}\label{eq:unified_discrete_Hutch}
    \eta^{nat}_L =\argmin_{\eta \in \R^p} \big \| (L^\top)^\dagger \partial_\rho f+L\ H_m(Z) \ \eta \big \|_2^2.
\end{equation}
Once we obtain $H_m(Z)$, the above least-squares problem can be solved by QR factorization, similar to the framework presented in~\Cref{subsec:Z_avail} or~\Cref{sec:qr_low_rank}. However, we remark here that the convergence behavior of $H_m(Z) \xrightarrow{m\rightarrow \infty} Z$ depends on the spectral properties of $Z$.

\subsection{Exploring the column space of $Z$ implicitly}\label{sec:column_Z}
As discussed in~\Cref{subsec:Z_unavail_Hut}, one way to reduce the complexity of implementing the NGD method is to find a low-rank approximation to the Jacobian $Z = \partial_\theta \rho$. For any $\zeta$, we have that $\zeta=\E \left[\langle \zeta, \xi \rangle \xi \right]$
given any random vector $\xi$ whose covariance is the identity. Hence, by the law of large numbers, for $m$ large enough, we have that
\begin{equation}\label{eq:zeta_approx}
    \mathbb{P} \left( \left\|\zeta - \hat{\zeta} \right\| > \epsilon \right) < \delta,\quad \text{where } \hat{\zeta} = \frac{1}{m} \sum_{k=1}^m \langle \zeta,\xi_k \rangle \xi_k,
\end{equation}
where  $\{\xi_1,\xi_2,\cdots,\xi_m\}$ are i.i.d.~random vectors. Therefore,
\[
    \left\| L \left(\zeta_j -  \hat{\zeta}_j\right)\right\| < \|L\| \epsilon,\quad 1\leq j \leq p,
\]
with high probability when $m$ is large enough (depending on the spectral property of $Z$). Here, $L$ is the important linear operator in the unified framework~\eqref{eq:unified}. In~\Cref{subsec:Z_unavail_Hut}, we approximate 
\[
Y = LZ \approx L H_m(Z),
\]
which is to compute the approximation matrix $H_m(Z)$ directly. Next, we present another way to obtain an approximated $Y$ whether or not $Z$ is explicitly available.

If we can find such $\{\xi_k\}$ satisfying~\eqref{eq:zeta_approx}, our final approximation to each $y_j$ in $Y = L Z =  (y_1 \ldots y_j \ldots,y_p)$ could be written as
\begin{equation}\label{eq:vj_Hutch}
  y_j = L \zeta_j \approx L \hat{\zeta}_j=  \frac{1}{m} \sum_{k=1}^m \langle \zeta_j,\xi_k \rangle\, L \xi_k,\quad 1\leq j \leq p. 
\end{equation}
Note that the inner product $\langle \zeta_j, \xi_k \rangle$ can be computed via the adjoint-state method if there is no direct access to $\{\zeta_j\}$; see~Section~\ref{subsec:adj_implicit} for details. Therefore, to obtain an approximated $Y$, we only need to evaluate $L h_k$ and the inner products $\langle \zeta_j, \xi_k \rangle$ for each $k$ and $j$, without directly accessing the Jacobian $Z =( \zeta_1 \ldots  \zeta_p)$.  A similar idea called randomized SVD could also apply here~\cite{halko2011finding}.

\clearpage

\end{document}